\definecolor{my-linkcolor}{rgb}{0.75,0,0}
\definecolor{my-citecolor}{rgb}{0.1,0.57,0}
\definecolor{my-urlcolor}{rgb}{0,0,0.75}
\title[Finite-index vertex operator subalgebras]{On semisimplicity of module categories for finite non-zero index vertex operator subalgebras}
\author{Robert McRae}
\date{}
\address{Yau Mathematical Sciences Center, Tsinghua University, Beijing 100084, China}
\curraddr{}
\email{rhmcrae@tsinghua.edu.cn}
\subjclass{Primary 17B69, 18M15, 18M20, 81R10}
\keywords{Vertex operator algebras, braided tensor categories, commutative algebra objects, semisimple categories, rational vertex operator algebras}
\theoremstyle{definition}\newtheorem{rema}{Remark}[section]
\theoremstyle{plain}
\newtheorem{theo}[rema]{Theorem}
\theoremstyle{definition}\newtheorem{defi}[rema]{Definition}
\theoremstyle{plain}\newtheorem{lemma}[rema]{Lemma}
\newtheorem{corol}[rema]{Corollary}
\theoremstyle{definition}\newtheorem{exam}[rema]{Example}
\theoremstyle{definition}
\theoremstyle{definition}
\theoremstyle{definition}\newtheorem{assum}[rema]{Assumption}
\newcommand{\cA}{\mathcal{A}}
\newcommand{\cR}{\mathcal{R}}
\newcommand{\cS}{\mathcal{S}}
\newcommand{\cF}{\mathcal{F}}
\newcommand{\cC}{\mathcal{C}}
\newcommand{\cW}{\mathcal{W}}
\newcommand{\CC}{\mathbb{C}}
\newcommand{\ZZ}{\mathbb{Z}}
\newcommand{\NN}{\mathbb{N}}
\newcommand{\Id}{\mathrm{Id}}
\newcommand{\tens}{\boxtimes}
\newcommand{\vac}{\mathbf{1}}
\DeclareMathOperator{\im}{Im}
\DeclareMathOperator{\coker}{Coker}
\let\ker\relax
\DeclareMathOperator{\ker}{Ker}
\DeclareMathOperator{\Endo}{End}
\begin{document}
\bibliographystyle{alpha}

\numberwithin{equation}{section}

\begin{abstract}
 Let $V\subseteq A$ be a conformal inclusion of vertex operator algebras and let $\cC$ be a category of grading-restricted generalized $V$-modules that admits the vertex algebraic braided tensor category structure of Huang-Lepowsky-Zhang. We give conditions under which $\cC$ inherits semisimplicity from the category of grading-restricted generalized $A$-modules in $\cC$, and vice versa. The most important condition is that $A$ be a rigid $V$-module in $\cC$ with non-zero categorical dimension, that is, we assume the index of $V$ as a subalgebra of $A$ is finite and non-zero. As a consequence, we show that if $A$ is strongly rational, then $V$ is also strongly rational under the following conditions: $A$ contains $V$ as a $V$-module direct summand, $V$ is $C_2$-cofinite with a rigid tensor category of modules, and $A$ has non-zero categorical dimension as a $V$-module. These results are vertex operator algebra interpretations of theorems proved for general commutative algebras in braided tensor categories. We also generalize these results to the case that $A$ is a vertex operator superalgebra.
\end{abstract}

\maketitle

 \tableofcontents

\section{Introduction}

Vertex operator algebras, which are an approach to the rigorous mathematical construction of two-dimensional conformal quantum field theories, are rather complicated algebraic structures and can be difficult to construct from scratch. As a result, some of the best sources of new vertex operator algebras are vertex operator subalgebras of old ones, and extensions, where a new vertex operator algebra is constructed on a sum of modules for an old one. For example, the celebrated moonshine module vertex operator algebra \cite{FLM} was first constructed as an extension of a subalgebra of the Leech lattice vertex operator algebra. However, while subalgebras and extensions are rich sources of new vertex operator algebras, it is not always clear how ``nice'' the new algebra is, compared to the old one.

In this paper, we consider whether the representation category of a vertex operator subalgebra or extension inherits semisimplicity from that of the original algebra. In the presence of the technical $C_2$-cofiniteness condition, semisimplicity of the full (grading-restricted generalized) module category is equivalent to the condition of rationality originally introduced by Zhu \cite{Zh}, which may be viewed as a mathematical characterization of the vertex operator algebras appearing in rational conformal field theory. More recently, the term ``strongly rational'' (or ``strongly regular'') has been applied to vertex operator algebras which are simple, self-contragredient, positive energy (also known as CFT-type), $C_2$-cofinite, and rational. The full module category of such a vertex operator algebra is a (semisimple) modular tensor category \cite{Hu-rig_mod}. 

A long-standing question asks when subalgebras of strongly rational vertex operator algebras are strongly rational. For example, the ``orbifold rationality problem'' asks if the fixed-point subalgebra of a finite group of automorphisms of a strongly rational vertex operator algebra is strongly rational; this problem has recently been solved for solvable automorphism groups \cite{CM} but remains open for general finite groups. Similarly, the ``coset rationality problem'' asks whether a tensor product subalgebra $U\otimes V$ of a strongly rational vertex operator algebra is strongly rational, provided $U$ and $V$ are mutual commutants and one of them is strongly rational. Such rationality problems are a major motivation for this paper, and as an application of our semisimplicity results, we present some sufficient conditions for a subalgebra of a strongly rational vertex operator algebra to be strongly rational (see Theorem \ref{thm:voa_rat_thm_intro} below).

Our main theorem in this work is based on the deep result, most recently developed by Huang, Lepowsky, and Zhang \cite{HLZ1}-\cite{HLZ8}, that suitable module categories for vertex operator algebras are braided tensor categories,  meaning that we can apply tools from tensor category theory to study vertex operator subalgebras and extensions. Suppose we have an inclusion $V\subseteq A$ of vertex operator algebras (with the same conformal vector). If $\cC$ is a category of (grading-restricted generalized) $V$-modules that includes $A$ and has the braided tensor category structure described in \cite{HLZ8}, then \cite[Theorem 3.2]{HKL} shows that $A$ is a commutative algebra (defined in Section \ref{sec:CA_ss_to_C_ss} below) in the braided tensor category $\cC$. In particular, there is a multiplication homomorphism $\mu_A: A\tens A\rightarrow A$ induced by the vertex operator on $A$ and a unit homomorphism $\iota_A: V\rightarrow A$ (which is simply the inclusion). By \cite[Theorem 3.4]{HKL}, the category $\cC_A^0$ of ``local'' or ``dyslectic'' modules for the commutative algebra $A$ in $\cC$ agrees with the category of (grading-restricted generalized) $A$-modules which, viewed as $V$-modules, are objects of $\cC$. Now here is our main theorem; see Theorem \ref{thm:main_vosa_thm} below for the generalization to the case that $A$ is a vertex operator superalgebra: 
\begin{theo}\label{thm:main_voa_thm_intro}
 Suppose $V\subseteq A$ is a conformal inclusion of vertex operator algebras and $\cC$ is a category of grading-restricted generalized $V$-modules that includes $A$ and admits the braided tensor category structure of \cite{HLZ8}. Assume that moreover:
 \begin{itemize}
  \item There is a $V$-homomorphism $\varepsilon_A: A\rightarrow V$ such that $\varepsilon_A\circ\iota_A=\Id_V$.
  \item The vertex operator algebra $A$ is a rigid and self-dual object of $\cC$ with evaluation $\varepsilon_A\circ\mu_A: A\tens A\rightarrow V$ and some coevaluation $i_A: V\rightarrow A\tens A$.
  \item For some non-zero index $[A:V]\in\CC$, $\mu_A\circ i_A=[A:V]\iota_A$  .
 \end{itemize}
Then:
\begin{enumerate}
 \item If $\cC$ is semisimple, then $\cC_A^0$ is also semisimple.
 \item If $\cC$ is rigid and $\cC_A^0$ is semisimple, then $\cC$ is also semisimple.
\end{enumerate}
\end{theo}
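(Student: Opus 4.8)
The plan is to translate the vertex-algebraic statement into the abstract representation theory of the commutative algebra $A$ in the braided tensor category $\cC$, via the induction functor $\cF=A\tens(-)\colon\cC\to\cC_A$ into the category $\cC_A$ of all (not necessarily local) $A$-modules in $\cC$, together with its right adjoint, the restriction (forgetful) functor $\cG\colon\cC_A\to\cC$. Here $\cG$ is exact and faithful, $\cF$ is exact because $A$ is rigid (tensoring with a dualizable object has two-sided adjoints), and by \cite[Theorem 3.4]{HKL} the full subcategory $\cC_A^0\subseteq\cC_A$ of local modules is exactly the category of $A$-modules in $\cC$. The first task is to read the three hypotheses as the single assertion that $A$ is a \emph{separable} commutative algebra with $V$ a direct summand. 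Concretely, I would form from the coevaluation the candidate section
\[
 \sigma\colon A\cong V\tens A\xrightarrow{\,i_A\tens\Id_A\,}(A\tens A)\tens A\cong A\tens(A\tens A)\xrightarrow{\,\Id_A\tens\mu_A\,}A\tens A,
\]
and use associativity of $\mu_A$ together with the third hypothesis $\mu_A\circ i_A=[A:V]\iota_A$ to compute $\mu_A\circ\sigma=[A:V]\,\Id_A$; commutativity of $A$ and symmetry of $i_A$ then upgrade $\tfrac{1}{[A:V]}\sigma$ to an $A$-$A$-bimodule section of $\mu_A$. Since $[A:V]\neq 0$, this exhibits $A$ as separable, equivalently that $\cG$ is a separable functor.

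For part (1), assume $\cC$ is semisimple. Given a short exact sequence $0\to W_1\to W_2\to W_3\to 0$ in $\cC_A$, apply $\cG$ to obtain a short exact sequence in $\cC$; since $\cC$ is semisimple it splits, so $\cG(W_2\to W_3)$ is a split epimorphism. The separability of $\cG$ then forces $W_2\to W_3$ to be a split epimorphism already in $\cC_A$, so the original sequence splits. As every short exact sequence in $\cC_A$ splits and the module categories in play are locally finite, $\cC_A$ is semisimple; restricting to the full subcategory $\cC_A^0$, which is closed under subquotients, yields semisimplicity of $\cC_A^0$.

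For part (2), assume $\cC$ is rigid and $\cC_A^0$ is semisimple. The role of the splitting $\varepsilon_A\circ\iota_A=\Id_V$ (that is, $V$ a direct summand of $A$) is to descend splittings from $\cC_A$ to $\cC$: given $0\to X_1\to X_2\xrightarrow{g}X_3\to 0$ in $\cC$, apply the exact functor $\cF$ to get a sequence of induced modules in $\cC_A$; if $\cC_A$ is semisimple this splits, with an $A$-module section $s\colon\cF(X_3)\to\cF(X_2)$, and then
\[
 \widetilde{s}\colon X_3\xrightarrow{\,\iota_A\tens\Id\,}A\tens X_3\xrightarrow{\ s\ }A\tens X_2\xrightarrow{\,\varepsilon_A\tens\Id\,}X_2
\]
satisfies $g\circ\widetilde{s}=(\varepsilon_A\circ\iota_A)\tens\Id_{X_3}=\Id_{X_3}$ by naturality and the retraction identity, so the original sequence splits in $\cC$. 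Thus the whole of part (2) reduces to the transfer of semisimplicity from $\cC_A^0$ to all of $\cC_A$. This I expect to be the main obstacle: the induced modules $\cF(X_i)$ are \emph{not} local in general, since their locality is governed by the monodromy (double braiding) of $A$ with $X_i$, so semisimplicity of $\cC_A^0$ cannot be applied to them directly. Overcoming this requires the genuinely categorical input that, for a separable commutative algebra $A$ in a rigid braided tensor category, semisimplicity of $\cC_A^0$ propagates to $\cC_A$; this is precisely where rigidity and the braiding must be used to control non-local $A$-modules in terms of local ones. The remaining points — exactness of $A\tens(-)$ from rigidity, naturality of the separability datum, and the local finiteness ensuring that universal splitting of short exact sequences yields semisimplicity — I would verify in passing.
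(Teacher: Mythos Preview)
Your treatment of part (1) is sound and essentially agrees with the paper: the paper's ``averaging over $A$'' construction in the proof of Theorem~\ref{thm:main_cat_thm_1} is exactly the separability datum you describe, and the paper's argument that a $\cC$-section can be averaged into a $\cC_A$-section is the separable-functor lifting you invoke. (Minor quibbles: you should not appeal to local finiteness, since the paper's notion of semisimplicity is simply that every surjection splits; and verifying that your $\sigma$ is a bimodule section, rather than merely a one-sided section, requires an identity like the paper's Lemma~\ref{rigidlike_lemma}.)

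Part (2), however, has a genuine gap at exactly the place you flag. You reduce the problem to showing that semisimplicity of $\cC_A^0$ propagates to all of $\cC_A$, and then stop, saying only that this ``requires the genuinely categorical input'' of rigidity and the braiding. But you give no argument for this propagation, and it is not a standard fact one can cite; in fact the paper explicitly remarks in the introduction that there is no known general way to control non-local $A$-modules from local ones. So as written your proof of (2) is incomplete.

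The paper bypasses this obstacle entirely. Rather than attempting to show $\cC_A$ is semisimple, it only shows that the unit object $\vac$ is projective in $\cC$, and then invokes rigidity (Lemma~\ref{lem:ss_from_1_proj}): in a rigid tensor category, projectivity of $\vac$ forces every surjection to split. To prove $\vac$ is projective, the paper constructs a projection functor $\Pi\colon\cC_A\to\cC_A^0$ as the image of an idempotent $\pi_X$ built from the coevaluation and the double braiding (this is where $[A:V]\neq 0$ enters). The crucial observation is that $\cF(\vac)\cong A$ is already local because $A$ is commutative, so after applying $\Pi\circ\cF$ to a surjection $f\colon W_1\to W_2$ one lands in $\cC_A^0$ with $\cF(\vac)$ untouched; semisimplicity of $\cC_A^0$ then provides a lift of $\Id_A\tens g$ through $\Pi(\Id_A\tens f)$, and composing with $\varepsilon_A$ (much as in your descent step) gives the required lift in $\cC$. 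The point is that one never needs semisimplicity of $\cC_A$: one only needs projectivity of the single local object $A$ in $\cC_A^0$, together with the projection $\Pi$ to get there.
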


Only the second assertion of this theorem assumes rigidity for the full tensor category $\cC$ of $V$-modules. In this sense, semisimplicity for a subalgebra is more difficult than for an extension. However, both semisimplicity results require non-vanishing of the index $[A:V]$, which turns out to be the categorical dimension of $A$ as a $V$-module assuming $A$ is a $\ZZ$-graded vertex operator algebra. (The assumption that $A$ is rigid in a tensor category of $V$-modules means that this index is finite, that is, $A$ is ``finite-dimensional as a $V$-module,'' equivalently, $V$ is a ``finite-index subalgebra of $A$.'') So to prove, for example, that $A$ inherits strong rationality from $V$, we must first check that $[A:V]\neq 0$. As we will show in Section \ref{sec:C_ss_to_CA_ss}, Theorem \ref{thm:main_voa_thm_intro}(1) is a vertex operator algebra version of Maschke's Theorem for finite groups, and the non-vanishing index requirement corresponds to the (essential!) requirement in Maschke's Theorem that the order of the group be non-zero in the field.

Both assertions of the main theorem are vertex operator algebra interpretations (via \cite{HKL}) of theorems for (commutative) algebras in (braided) tensor categories. Indeed, Theorem \ref{thm:main_voa_thm_intro}(1) is not original to this paper since it is an almost immediate consequence of \cite[Theorem 3.3]{KO}. We have included the result here to compare with the converse result Theorem \ref{thm:main_voa_thm_intro}(2), to emphasize the importance of the non-vanishing index condition, and to present a proof that directly generalizes standard proofs of Maschke's Theorem (unlike the proof in \cite{KO}). Thus in our proof, the non-vanishing index condition plays the same role as the corresponding condition in Maschke's Theorem: it is needed for ``averaging'' a $V$-module homomorphism into an $A$-module homomorphism.

The proof of Theorem \ref{thm:main_voa_thm_intro}(2) generalizes part of the argument used in \cite{CM} to prove strong rationality for orbifold subalgebras associated to finite cyclic automorphism groups. The basic idea is to relate the representation theory of $V$ to the semisimple representation theory of $A$ using the induction functor of \cite{KO, CKM1}. But usually, $V$-modules induce to ``non-local'' $A$-modules which are not objects of the semisimple braided tensor category $\cC_A^0$. Thus after inducing a $V$-module, we have to project the resulting possibly non-local $A$-module to $\cC_A^0$ using a functor from \cite{KO, FFRS, McR}; the construction of this projection functor is where we use the assumption $[A:V]\neq 0$. Induction and projection applied to $V$ itself yields a \textit{non-zero} object in $\cC_A^0$, namely $A$, allowing us to use the semisimplicity of $\cC_A^0$ to prove $V$ is projective in $\cC$. Since $V$ is the unit object of $\cC$ and we assume $\cC$ is rigid, this is enough to conclude $\cC$ is semisimple (see for example \cite[Corollary 4.2.13]{EGNO}).

In Theorem \ref{thm:main_voa_thm_intro}(2), we would not need to assume that $\cC$ is rigid and that $[A:V]\neq 0$ if we assumed instead that the full category of non-local $A$-modules in $\cC$ is semisimple. But non-local modules for vertex operator algebras are not well understood in general, so there does not seem to be any general practical way to show they are semisimple. For orbifold extensions, that is, $V$ is the fixed-point subalgebra of an automorphism group of $A$, non-local $A$-modules include twisted $A$-modules associated to automorphisms of $A$. But even in this particular setting, semisimplicity for twisted modules is not clear even if we assume untwisted $A$-modules are semisimple. Thus the point of Theorem \ref{thm:main_voa_thm_intro}(2) is to provide criteria for $\cC$ to be semisimple that do not require any knowledge about non-local $A$-modules. 

As an application of Theorem \ref{thm:main_voa_thm_intro}, we prove the following rationality theorem; see Theorem \ref{thm:vosa_rat_1} for the generalization to the case that $A$ is a vertex operator superalgebra:
\begin{theo}\label{thm:voa_rat_thm_intro}
 Let $V\subseteq A$ be a conformal inclusion of vertex operator algebras.
 \begin{enumerate}
  \item Assume $V$ is strongly rational. Then $A$ is also strongly rational if and only if:
  \begin{itemize}
   \item $A$ is simple and positive energy.
   \item The dimension of $A$ in the modular tensor category of $V$-modules is non-zero.
  \end{itemize}
  
  \item Assume $A$ is strongly rational. Then $V$ is also strongly rational if and only if:
\begin{itemize}
 \item There is a $V$-module homomorphism $\varepsilon_A: A\rightarrow V$ such that $\varepsilon_A\vert_V=\Id_V$.
  \item $V$ is $C_2$-cofinite.
  \item The tensor category $\cC$ of grading-restricted generalized $V$-modules is rigid.
  \item The categorical dimension $\dim_\cC A$ is non-zero.
 \end{itemize}
 \end{enumerate}
\end{theo}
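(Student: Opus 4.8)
The plan is to read both equivalences as dictionary entries translating the representation-theoretic notion of strong rationality into the purely categorical hypotheses of Theorem~\ref{thm:main_voa_thm_intro}. Throughout I would use that a strongly rational vertex operator algebra has a semisimple, rigid (indeed modular) tensor category of modules \cite{Hu-rig_mod}, that $A$ is a commutative algebra in $\cC$ with $\cC_A^0$ its category of local modules \cite{HKL}, that $A$ is rational exactly when $\cC_A^0$ is semisimple, and that the index satisfies $[A:V]=\dim_\cC A$. The work is then to verify, in each direction, the three bulleted hypotheses of Theorem~\ref{thm:main_voa_thm_intro}: existence of the splitting $\varepsilon_A$, self-duality of $A$ with evaluation $\varepsilon_A\circ\mu_A$, and non-vanishing of $[A:V]$.

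For the ``if'' directions I would verify these hypotheses as follows. Since $A$ is simple and positive energy, $A_0=\CC\vac$, so $\hom_\cC(V,A)=\CC$ and $\iota_A$ realizes $V$ as a direct summand of $A$ in the semisimple category $\cC$; a retraction then gives $\varepsilon_A$ with $\varepsilon_A\circ\iota_A=\Id_V$. Rigidity of $A$ is inherited from rigidity of $\cC$. For self-duality I would consider the pairing $e=\varepsilon_A\circ\mu_A$, which is nonzero because $\varepsilon_A\circ\mu_A\circ(\iota_A\tens\iota_A)$ is the unit isomorphism, and the induced morphism $\phi\colon A\to A^*$; its kernel is an ideal by invariance of $e$, so simplicity of $A$ forces $\phi$ to be injective. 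Writing $A^*\cong A'$ and comparing multiplicities of simple objects through the duality involution, injectivity of $\phi$ in the semisimple category $\cC$ upgrades to an isomorphism, exhibiting $A$ as self-dual with evaluation $e$. Finally $[A:V]=\dim_\cC A\neq0$ by hypothesis, and $\mu_A\circ i_A=[A:V]\iota_A$. In part~(2) the self-duality step is easier, since strong rationality of $A$ already supplies a nondegenerate invariant form and simplicity of $A$ makes the nonzero pairing $e$ proportional to it.

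With the hypotheses in hand, Theorem~\ref{thm:main_voa_thm_intro}(1) yields semisimplicity of $\cC_A^0$ (hence rationality of $A$) in part~(1), and Theorem~\ref{thm:main_voa_thm_intro}(2) yields semisimplicity of $\cC$ (hence rationality of $V$) in part~(2). I would then upgrade ``rational'' to ``strongly rational.'' In part~(1): $A$ is simple and positive energy by hypothesis, self-contragredient by the self-duality above, and $C_2$-cofinite because it is a finite extension of the $C_2$-cofinite $V$ (a finite-length object of $\cC$). In part~(2): $V\subseteq A$ shares the conformal vector and $A$ is CFT-type, so $V_0=\CC\vac$ and $V$ is positive energy; then $\Endo_\cC(V)=\CC$, and semisimplicity of $\cC$ forces $V$ to be a simple object, i.e.\ a simple vertex operator algebra; $V$ is self-contragredient because rigidity of $\cC$ identifies the contragredient $V'$ with the dual of the unit object, namely $V$ itself; and $V$ is $C_2$-cofinite by hypothesis.

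For the ``only if'' directions I would assume both $V$ and $A$ strongly rational and read off the listed conditions: $A$ simple and positive energy, $V$ $C_2$-cofinite, and rigidity of $\cC$ are immediate from the definitions and \cite{Hu-rig_mod}, while $\varepsilon_A$ is produced by the same splitting argument as above. The only substantive point is $\dim_\cC A\neq0$: here $A$ is a connected \'etale algebra in the modular tensor category $\cC$ (connected since it is simple and positive energy, \'etale since $\cC_A^0$ is semisimple), and such algebras have nonzero, indeed positive, categorical dimension. I expect the two main obstacles to be exactly this last fact, which I would quote from the theory of \'etale algebras in modular tensor categories, and the self-duality step of the second paragraph: deriving self-contragredience of $A$ in part~(1) purely from simplicity and semisimplicity, and matching the vertex-algebraic contragredient data with the categorical duality so that the evaluation is precisely $\varepsilon_A\circ\mu_A$.
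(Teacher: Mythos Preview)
Your overall strategy is exactly the paper's: verify the three bulleted hypotheses of Theorem~\ref{thm:main_voa_thm_intro} and invoke it, then upgrade rationality to strong rationality by checking the remaining adjectives. Several of your sub-arguments are valid alternatives to the paper's. Your proof that $V$ is simple in part~(2), via $\Endo_\cC(V)=\CC$ (from $V_{(0)}=\CC\vac$) together with semisimplicity of $\cC$, is a clean categorical substitute for the paper's direct ideal computation using skew-symmetry of $Y_A$; likewise your argument $V'\cong V^*\cong\vac^*\cong\vac$ replaces the paper's restriction of the nondegenerate form on $A$. For self-duality of $A$ with evaluation $\varepsilon_A\circ\mu_A$, the paper simply cites \cite[Lemma 1.20]{KO}; your sketch is essentially a proof of that lemma, though the ``upgrade from injection to isomorphism'' needs a word (commutativity of $A$ makes the pairing symmetric, so the transpose is also injective). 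For the ``only if'' non-vanishing of $\dim_\cC A$, the paper's reference is \cite[Theorems 6.3 and 6.10]{KZ}.

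There are, however, three points you gloss over that the paper makes explicit. First, the identity $[A:V]=\dim_\cC A$ is not a definition: the categorical dimension in a ribbon category is $\varepsilon_A\circ\mu_A\circ\cR_{A,A}\circ(\theta_A\tens\Id_A)\circ i_A$, and one needs $\theta_A=\Id_A$ (from the $\ZZ$-grading of $A$) and commutativity $\mu_A\circ\cR_{A,A}=\mu_A$ to reduce this to $\varepsilon_A\circ\mu_A\circ i_A=[A:V]$; one also needs $\hom_\cC(V,A)=\CC$ (positive energy) to know $\mu_A\circ i_A$ is a scalar multiple of $\iota_A$ at all. Second, ``$A$ is rational exactly when $\cC_A^0$ is semisimple'' conflates semisimplicity of grading-restricted generalized modules with complete reducibility of all $\NN$-gradable weak modules; the paper invokes \cite[Lemma 3.6, Proposition 3.7]{CM} (or \cite[Proposition 4.16]{McR1}) to bridge this, which requires $C_2$-cofiniteness of $A$ first. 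Third, ``self-contragredient by the self-duality above'' in part~(1) is not immediate: self-duality in $\cC$ gives $A\cong A'$ only as $V$-modules, and you must check that the form induced by $\varepsilon_A\circ\mu_A$ is $A$-invariant (it is, by associativity of $\mu_A$, but this should be said). The paper sidesteps this entirely by using \cite[Corollary 3.2]{Li}: since $A$ is simple, positive energy, and $L(1)A_{(1)}=L(1)V_{(1)}=0$ (the decomposition $A=V\oplus\widetilde{A}$ with $\widetilde{A}\subseteq\bigoplus_{n\geq 1}A_{(n)}$ is used here), $A$ is self-contragredient.
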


If $V$ is positive energy and $C_2$-cofinite, then its full module category $\cC$ is already a braided tensor category \cite{Hu}, so the third condition in Theorem \ref{thm:voa_rat_thm_intro}(2) is simply that $\cC$ is rigid. Rigidity of $\cC$ for positive-energy self-contragredient $C_2$-cofinite $V$ is widely believed to be automatic   (see for example \cite[Conjecture 4.2]{Hu-C2-conj}), but so far this is known only in a few examples. We will discuss whether other conditions in Theorem \ref{thm:voa_rat_thm_intro} may be automatic or vacuous in Section \ref{sec:VOA_app}. Perhaps the most interesting question is whether there exists a simple positive-energy vertex operator algebra which is an extension of a strongly rational subalgebra and has categorical dimension $0$. More generally, does there exist a commutative algebra with dimension $0$ and trivial twist in a $\CC$-linear semisimple modular tensor category? Although it seems likely that such algebras exist, the author does not know any examples. 

Now to what specific examples of vertex operator subalgebras and extensions does Theorem \ref{thm:voa_rat_thm_intro} apply? Non-vanishing of the index has been verified for extensions of orbifold type \cite{McR1} and of coset type \cite{CKM}. In Example \ref{exam:orbifold} below, we will discuss the implications of the theorem for orbifold subalgebras and extensions, although we do not get anything new for vertex operator algebras since similar (in fact, stronger) results have been obtained in \cite{McR1, McR}. But we do obtain the superalgebra generalization of \cite[Theorem 4.13]{McR1} in Corollary \ref{cor:vosa_orbifold}. For a coset-type extension $U\otimes V\subseteq A$ where $U$ and $V$ are mutual commutants in $A$ and both $U$ and $A$ are strongly rational, one can use Theorem \ref{thm:voa_rat_thm_intro} to show that $V$ is strongly rational provided it is $C_2$-cofinite and its tensor category of modules is rigid. The detailed proof is deferred to \cite{McR3}, where with more effort we use the strong rationality of $A$ and $U$, combined with $C_2$-cofiniteness for $V$, to show that the category of $V$-modules is rigid. That is, in \cite{McR3} we use Theorem \ref{thm:voa_rat_thm_intro} to reduce the coset rationality problem to the problem of $C_2$-cofiniteness for the coset subalgebra.

The remaining contents of this paper are as follows. Sections \ref{sec:C_ss_to_CA_ss} and \ref{sec:CA_ss_to_C_ss} deal with algebra objects in abstract tensor categories. In Section \ref{sec:C_ss_to_CA_ss}, we prove a semisimplicity result for algebras in semisimple tensor categories corresponding to Theorem \ref{thm:main_voa_thm_intro}(1); it is the same result as \cite[Theorem 3.3]{KO}, but we provide a different proof. In Section \ref{sec:CA_ss_to_C_ss}, we prove the categorical generalization of Theorem \ref{thm:main_voa_thm_intro}(2), showing that a rigid braided tensor category $\cC$ is semisimple given the existence of a suitable commutative algebra in $\cC$ with semisimple local module category. In Section \ref{sec:VOA_app}, we explain how the abstract results for algebras in tensor categories apply to vertex operator algebras, and in particular we prove Theorems \ref{thm:main_voa_thm_intro} and \ref{thm:voa_rat_thm_intro}. We also discuss what Theorem \ref{thm:voa_rat_thm_intro} says about orbifold-type subalgebras and extensions, and whether some of the conditions in Theorem \ref{thm:voa_rat_thm_intro} may be vacuous or redundant. In Section \ref{sec:VOSA}, we generalize Theorems \ref{thm:main_voa_thm_intro} and \ref{thm:voa_rat_thm_intro} to the case that $A$ is a vertex operator superalgebra and discuss what these results say about orbifold-type subalgebras and extensions.

\vspace{2.5mm}

\noindent\textbf{Acknowledgements.} An early version of this work was presented at the Oberwolfach Workshop  \textit{Subfactors and Applications} organized by Dietmar Bisch, Terry Gannon, Vaughan Jones, and Yasuyuki Kawahigashi; see \cite{McR-ext-ab} for the accompanying extended abstract. I would also like to thank Shashank Kanade and Ling Chen for further opportunities to present this work, and I would like to thank Thomas Creutzig, Yi-Zhi Huang, and Jinwei Yang for comments and suggestions.

\section{Algebras in semisimple tensor categories}\label{sec:C_ss_to_CA_ss}

Before proving that certain categories are semisimple, we should clarify what we mean by a semisimple category. As usual in a category $\cC$, a morphism $f: W_1\rightarrow W_2$ is \textit{surjective} if for any pair of morphisms $g,h: W_2\rightarrow W_3$, $g\circ f=h\circ f$ implies $g=h$. Then we say that $\cC$ is \textit{semisimple} if any surjection $f: W_1\rightarrow W_2$ in $\cC$ splits, that is, there is a section $\sigma: W_2\rightarrow W_1$ such that $f\circ\sigma=\Id_{W_2}$. When $\cC$ is abelian, this means that the canonical morphism $\ker f\oplus\im\sigma\rightarrow W_1$ will be an isomorphism, and this in turn means that any subobject of any object $W_1$ of $\cC$ has a complement. (Specifically, given a subobject $i: \widetilde{W}_1\hookrightarrow W_1$, take $W_2=\coker i$ and $f: W_1\rightarrow W_2$ the cokernel morphism, so that $\im i = \ker f$, and thus $W_1\cong\widetilde{W}_1\oplus W_2$.) This definition of semisimplicity does not imply that every object of $\cC$ is a coproduct of finitely many simple objects, unless we assume in addition that all objects of $\cC$ have finite length.

\begin{rema}
 The definition of semisimple category that we are using also applies to categories that are not necessarily abelian. For example, the Axiom of Choice amounts to the assertion that the category of sets is semisimple.
\end{rema}

Let $\cC$ be an $\mathbb{F}$-linear tensor category, not necessarily symmetric or braided, where $\mathbb{F}$ is a field. This means that $\cC$ is an $\mathbb{F}$-linear abelian monoidal category such that the tensor product bifunctor induces $\mathbb{F}$-bilinear maps on morphisms. We use $\tens$ to denote the tensor product bifunctor on $\cC$, $\vac$ to denote the unit object, $l$ and $r$ to denote the left and right unit isomorphisms, and $\cA$ to denote the associativity isomorphisms. For convenience, we will also assume $\mathbb{F}=\mathrm{End}_\cC(\vac)$.
\begin{defi}
 A \textit{(unital, associative) algebra} in $\cC$ is an object $A$ equipped with a \textit{multiplication} morphism $\mu_A: A\tens A\rightarrow A$ and a \textit{unit} morphism $\iota_A:\vac\rightarrow A$ satisfying the following properties:
 \begin{enumerate}
  \item Unit: $\mu_A\circ(\iota_A\tens\Id_A)= l_A$ and $\mu_A\circ(\Id_A\tens\iota_A)=r_A$.
  \item Associativity: $\mu_A\circ(\Id_A\tens\mu_A)=\mu_A\circ(\mu_A\tens\Id_A)\circ\cA_{A,A,A}$.
 \end{enumerate}
\end{defi}
\begin{defi}
 Suppose $(A,\mu_A,\iota_A)$ is an algebra in $\cC$. A \textit{(left) $A$-module} is an object $X$ of $\cC$ equipped with a multiplication $\mu_X: A\tens X\rightarrow X$ satisfying the following properties:
  \begin{enumerate}
   \item Left unit: $\mu_X\circ(\iota_A\tens\Id_X)= l_X$ and
   \item Associativity: $\mu_X\circ(\Id_A\tens\mu_X)=\mu_X\circ(\mu_A\tens\Id_X)\circ\cA_{A,A,X}$.
  \end{enumerate}
  A morphism $f: X_1\rightarrow X_2$ of left $A$-modules is a morphism in $\cC$ such that
  \begin{equation*}
   f\circ\mu_{X_1}=\mu_{X_2}\circ(\Id_A\tens f).
  \end{equation*}
\end{defi}
\begin{rema}
 Categories of right $A$-modules and $A$-bimodules can be defined similarly.
\end{rema}

\begin{exam}
 If $(A,\mu_A,\iota_A)$ is an algebra in $\cC$, then $(A,\mu_A)$ is an $A$-module.
\end{exam}

We denote the category of $A$-modules by $\cC_A$. The goal of this section is to present an alternate proof of \cite[Theorem 3.3]{KO}, which asserts that $\cC_A$ is semisimple if $\cC$ is semisimple and $A$ satisfies a non-vanishing dimension condition.
\begin{exam}
 Suppose $\cC$ is the (semisimple) category of vector spaces over a field $\mathbb{F}$ of characteristic $p$ and $A=\mathbb{F}[G]$ is the group algebra of a finite group $G$. Then $\cC_A$ is the category of $G$-modules over $\mathbb{F}$, and Maschke's Theorem asserts that this category is semisimple provided $p\nmid \vert G\vert$, that is, $\dim_\mathbb{F} \mathbb{F}[G]\neq 0$. In other words, $\cC_A$ is semisimple if the index of the trivial group in $G$ is non-zero in $\mathbb{F}$.
\end{exam}

The main theorem of this section is a direct categorical generalization of Maschke's Theorem for finite groups, and the proof we will present directly generalizes standard proofs of Maschke's Theorem. But first we need a categorical version of the dimension of $A$ as an object of $\cC$, or of the index of $\vac$ as a subalgebra of $A$. For this, we will assume that $A$ is a rigid and self-dual object of $\cC$, though we do not need the entire category $\cC$ to be rigid. Observe, for example, that Maschke's Theorem applies to infinite-dimensional representations of a finite group, even though the category of all vector spaces is not rigid.
\begin{assum}\label{assum:main_assum_1}
 The algebra $(A,\mu_A,\iota_A)$ in $\cC$ satisfies the following conditions:
 \begin{enumerate}
  \item There is a morphism $\varepsilon_A: A\rightarrow\vac$ such that $\varepsilon_A\circ\iota_A=\Id_\vac$.
  \item As an object of $\cC$, $A$ is rigid and self-dual with evaluation $\varepsilon_A\circ\mu_A: A\tens A\rightarrow\vac$ and coevaluation $i_A:\vac\rightarrow A\tens A$. That is, the rigidity compositions
  \begin{equation*}
   A\xrightarrow{l_A^{-1}}\vac\tens A\xrightarrow{i_A\tens\Id_A} (A\tens A)\tens A\xrightarrow{\cA_{A,A,A}^{-1}} A\tens(A\tens A)\xrightarrow{\Id_A\tens(\varepsilon_A\circ\mu_A)} A\tens\vac\xrightarrow{r_A} A
  \end{equation*}
and
\begin{equation*}
 A\xrightarrow{r_A^{-1}} A\tens\vac\xrightarrow{\Id_A\tens i_A} A\tens(A\tens A)\xrightarrow{\cA_{A,A,A}} (A\tens A)\tens A\xrightarrow{(\varepsilon_A\circ\mu_A)\tens\Id_A} \vac\tens A\xrightarrow{l_A} A
\end{equation*}
both equal $\Id_A$.
\item $\mu_A\circ i_A=[A:\vac]\iota_A$ for some \textit{index} $[A:\vac]\in\mathbb{F}$.
 \end{enumerate}
\end{assum}
We could also call the index $[A:\vac]$ the dimension of $A$ in $\cC$, but we caution that if $A$ is a ribbon tensor category, then $[A:\vac]$ is not the categorical dimension $\dim_\cC A$ unless the ribbon structure $\delta_A: A\rightarrow A^{**}$ is the identity on $A=A^{**}$. We can now state the main theorem of this section:
\begin{theo}\label{thm:main_cat_thm_1}
 Under Assumption \ref{assum:main_assum_1}, if $\cC$ is semisimple and $[A:\vac]\neq 0$, then $\cC_A$ is semisimple.
\end{theo}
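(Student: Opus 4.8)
The plan is to imitate the standard averaging proof of Maschke's theorem. Given a surjection $f:X_1\rightarrow X_2$ of $A$-modules, semisimplicity of the underlying category $\cC$ provides a section $s:X_2\rightarrow X_1$ that is merely a morphism in $\cC$, not necessarily an $A$-module map. The task is to manufacture from $s$ an honest $A$-module section $\sigma$, and the device for doing this is the rigid/self-dual structure on $A$ together with the nonvanishing of $[A:\vac]$.

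First I would write down the candidate averaged morphism. The naive group-algebra formula $\sigma = \tfrac{1}{|G|}\sum_g g\cdot s\cdot g^{-1}$ has a categorical counterpart built from the module actions $\mu_{X_1},\mu_{X_2}$, the coevaluation $i_A$, the evaluation $\varepsilon_A\circ\mu_A$, and the associativity/unit isomorphisms. Concretely, I would define $\sigma:X_2\rightarrow X_1$ as the composite that first uses $i_A$ to produce a factor of $A\tens A$, then transports one copy of $A$ across $X_2\xrightarrow{s}X_1$ via the module actions, and finally contracts the two copies of $A$ using the evaluation; the overall normalization is then $\tfrac{1}{[A:\vac]}$ times this string. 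The self-duality hypothesis in Assumption \ref{assum:main_assum_1} is exactly what guarantees that ``acting then dualizing'' reproduces the inverse action, so that the two $A$-strands can be contracted consistently.

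The two things to verify are that $\sigma$ is an $A$-module homomorphism and that $f\circ\sigma=\Id_{X_2}$. For the splitting identity, one pushes $f$ through the composite: because $s$ is a section over $\cC$ (so $f\circ s=\Id_{X_2}$) and $f$ intertwines the $A$-actions, the string diagram collapses using the rigidity relations until the only surviving ingredient is $\mu_A\circ i_A = [A:\vac]\iota_A$; dividing by $[A:\vac]$ yields $\Id_{X_2}$, and this is precisely the step that breaks down when $[A:\vac]=0$. For the intertwining property, I would slide the action $\mu_A$ past the averaged string: moving an incoming $A$-strand through $i_A$ and the evaluation produces the same morphism whether it is inserted before or after $\sigma$, which is a diagrammatic consequence of associativity of the module action together with the zig-zag (rigidity) identities.

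The main obstacle I anticipate is bookkeeping rather than conceptual: correctly threading the associativity isomorphisms $\cA$ and unit isomorphisms $l,r$ through the composite so that the rigidity compositions in Assumption \ref{assum:main_assum_1} apply verbatim, and checking that the self-duality is used in a way compatible with the (possibly non-identity) duality structure. I would organize this by drawing the relevant string diagrams and reducing each verification to the two explicit rigidity compositions displayed in the assumption, so that the only nontrivial input beyond naturality is $\mu_A\circ i_A=[A:\vac]\iota_A$. Once $\sigma$ is shown to be an $A$-module section, semisimplicity of $\cC_A$ follows from the definition of semisimple category given above.
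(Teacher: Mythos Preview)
Your averaging strategy is the same as the paper's, but there is one genuine gap and one point where your description of the construction is off.

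The gap: you take for granted that a surjection $f:X_1\to X_2$ in $\cC_A$ is still a surjection in $\cC$, so that semisimplicity of $\cC$ hands you a $\cC$-section. This is not automatic---an epimorphism in a subcategory need not remain epi in the ambient category. The paper addresses this first: the cokernel of $f$ computed in $\cC$ inherits an $A$-module structure, and this step requires right exactness of $A\tens\bullet$, which in turn uses the rigidity of $A$ from Assumption~\ref{assum:main_assum_1}(2). Once the cokernel map is known to be a $\cC_A$-morphism, surjectivity of $f$ in $\cC_A$ forces it to vanish, so $f$ is epi in $\cC$ as well. You should not list this under ``bookkeeping''; it is a genuine use of the rigidity hypothesis on $A$.

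The construction: your sketch speaks of ``contracting the two copies of $A$ using the evaluation,'' but no evaluation enters the definition of the averaged map. The correct composite is
\[
X_2 \xrightarrow{l_{X_2}^{-1}} \vac\tens X_2 \xrightarrow{i_A\tens\Id} (A\tens A)\tens X_2 \xrightarrow{\cA^{-1}} A\tens(A\tens X_2) \xrightarrow{\Id\tens\mu_{X_2}} A\tens X_2 \xrightarrow{\Id\tens s} A\tens X_1 \xrightarrow{\mu_{X_1}} X_1,
\]
with both $A$-strands absorbed by module actions, one on each side of $s$. The intertwining check uses associativity of $\mu_{X_1}$, then Lemma~\ref{rigidlike_lemma} (which is where the evaluation and the zig-zag identities are actually spent) to slide an incoming $A$-strand across $i_A$, then associativity of $\mu_{X_2}$. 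The check that $f\circ S=[A:\vac]\,\Id_{X_2}$ needs only that $f$ is an $A$-map, $f\circ s=\Id$, associativity of $\mu_{X_2}$, and $\mu_A\circ i_A=[A:\vac]\,\iota_A$; no rigidity relations are invoked at that stage.
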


We will use standard graphical calculus in the proof for simplicity and clarity. We will need the following lemma; see \cite[Lemma 3.7]{McR} for a proof:
\begin{lemma}\label{rigidlike_lemma}
  The two morphisms $A\rightarrow A\tens A$ in $\cC$ given by the compositions
  \begin{equation*}
   A\xrightarrow{l_A^{-1}} \vac\tens A\xrightarrow{i_A\tens 1_A} (A\tens A)\tens A\xrightarrow{\cA_{A,A,A}^{-1}} A\tens(A\tens A)\xrightarrow{1_A\tens\mu_A} A\tens A
  \end{equation*}
and
\begin{equation*}
 A\xrightarrow{r_A^{-1}} A\tens\vac\xrightarrow{1_A\tens i_A} A\tens(A\tens A)\xrightarrow{\cA_{A,A,A}} (A\tens A)\tens A\xrightarrow{\mu_A\tens 1_A} A\tens A
\end{equation*}
are equal. Diagrammatically,
\begin{align*}
 \begin{matrix}
  \begin{tikzpicture}[scale = .9, baseline = {(current bounding box.center)}, line width=0.75pt]
   \draw (2,0) -- (2,1.5) .. controls (2,1.7) .. (1.7,1.8);
   \draw (1.5,2.2) -- (1.5,2.6);
   \draw (1.3,1.8) .. controls (1,1.7) .. (1,1.5) .. controls (1,.8) and (0,.8) .. (0,1.5) -- (0,2.6);
   \draw[dashed] (2,.4) .. controls (.5,.5) .. (.5,1);
   \node at (1.5,2) [draw,minimum width=20pt,minimum height=10pt,thick, fill=white] {$\mu_A$};
   \node at (2,-.25) {$A$};
   \node at (1.5,2.85) {$A$};
   \node at (0,2.85) {$A$};
   \node at (0,.9) {$A$};
   \node at (1,.9) {$A$};
  \end{tikzpicture}
 \end{matrix} =
 \begin{matrix}
  \begin{tikzpicture}[scale = .9, baseline = {(current bounding box.center)}, line width=0.75pt]
   \draw (0,0) -- (0,1.5) .. controls (0,1.7) .. (.3,1.8);
   \draw (.5,2.2) -- (.5,2.6);
   \draw (.7,1.8) .. controls (1,1.7) .. (1,1.5) .. controls (1,.8) and (2,.8) .. (2,1.5) -- (2,2.6);
   \draw[dashed] (0,.4) .. controls (1.5,.5) .. (1.5,1);
   \node at (.5,2) [draw,minimum width=20pt,minimum height=10pt,thick, fill=white] {$\mu_A$};
   \node at (0,-.25) {$A$};
   \node at (.5,2.85) {$A$};
   \node at (2,2.85) {$A$};
   \node at (1,.9) {$A$};
   \node at (2,.9) {$A$};
  \end{tikzpicture}
 \end{matrix} .
\end{align*}
 \end{lemma}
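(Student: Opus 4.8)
The plan is to recognize that Assumption \ref{assum:main_assum_1} exhibits $A$ as a \emph{Frobenius algebra} in $\cC$: the evaluation $e_A:=\varepsilon_A\circ\mu_A$ together with the coevaluation $i_A$ satisfies the two zigzag identities recorded in Assumption \ref{assum:main_assum_1}(2), and the two morphisms $A\rightarrow A\tens A$ appearing in the lemma are precisely the left and right versions of the induced Frobenius comultiplication. The equality of these two comultiplications is a standard \emph{planar} fact about Frobenius algebras (it needs neither commutativity nor a braiding), so since $\cC$ is not assumed braided I would carry out every step using only planar graphical moves.

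The key preliminary observation is that the evaluation $e_A$ is \emph{invariant} (associative), which follows from the associativity axiom for $\mu_A$:
\begin{align*}
 e_A\circ(\mu_A\tens\Id_A) &=\varepsilon_A\circ\mu_A\circ(\mu_A\tens\Id_A)\\
 &=\varepsilon_A\circ\mu_A\circ(\Id_A\tens\mu_A)\circ\cA_{A,A,A}\\
 &=e_A\circ(\Id_A\tens\mu_A)\circ\cA_{A,A,A}.
\end{align*}
Graphically this says that the trivalent multiplication vertex may be slid from the left input of the evaluation cap to its right input; this move is planar, so it is available even in the absence of a braiding.

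With invariance in hand I would prove the equality by the usual ``bending'' argument, starting from the second (right-hand) morphism $\Delta_R=(\mu_A\tens\Id_A)\circ\cA_{A,A,A}\circ(\Id_A\tens i_A)\circ r_A^{-1}$. First I would rewrite the multiplication using the zigzag identity $(e_A\tens\Id_A)\circ(\Id_A\tens i_A)=\Id_A$ of Assumption \ref{assum:main_assum_1}(2) to insert an extra coevaluation--evaluation (cup--cap) pair directly beneath $\mu_A$, placing the multiplication vertex underneath a new evaluation cap. I would then slide that vertex across the cap using the invariance identity above, and finally cancel the freshly created cup--cap pair using the complementary zigzag identity $(\Id_A\tens e_A)\circ(i_A\tens\Id_A)=\Id_A$. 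The net effect is to transport the multiplication across the diagram, interchanging the roles of the two coevaluation legs, so that the surviving coevaluation strand moves from the right to the left; this produces exactly the first (left-hand) morphism $\Delta_L=(\Id_A\tens\mu_A)\circ\cA_{A,A,A}^{-1}\circ(i_A\tens\Id_A)\circ l_A^{-1}$.

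The main obstacle is organizational rather than conceptual: one must keep careful track of the associativity isomorphisms $\cA_{A,A,A}$ and the unit isomorphisms $l_A,r_A$ throughout the bending argument, and verify at each step that the graphical move used is genuinely planar, so that no braiding is secretly required. Apart from this coherence bookkeeping, the argument uses only the algebra axioms for $(A,\mu_A,\iota_A)$ and the rigidity/self-duality data of Assumption \ref{assum:main_assum_1}(2); in particular it does \emph{not} use the non-vanishing of the index $[A:\vac]$, which enters only later in the proof of Theorem \ref{thm:main_cat_thm_1}.
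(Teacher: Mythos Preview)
Your proposal is correct and follows the standard Frobenius-algebra route: associativity of $\mu_A$ immediately gives the invariance of the pairing $e_A=\varepsilon_A\circ\mu_A$, and the planar bending argument you describe (insert one zigzag on the output of $\mu_A$ in $\Delta_R$, slide $\mu_A$ across the cap via invariance, then cancel with the other zigzag) produces $\Delta_L$ exactly as claimed. The paper does not supply its own proof here but simply cites \cite[Lemma~3.7]{McR}; your observations that neither a braiding nor the non-vanishing of $[A:\vac]$ is required are consistent with the Remark the paper places immediately after the lemma.
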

 \begin{rema}
  The proof of Lemma \ref{rigidlike_lemma}, as given in \cite{McR}, is the only place where we use Assumption \ref{assum:main_assum_1}(1) and the assumption on the evaluation in Assumption \ref{assum:main_assum_1}(2). The rest of the proof of Theorem \ref{thm:main_cat_thm_1} only requires the existence of $i_A$ and Assumption \ref{assum:main_assum_1}(3), as well as right exactness of the tensoring functor $A\boxtimes\bullet$ (which follows from the assumption that $A$ is a rigid object in $\cC$).
 \end{rema}

 Now we proceed with the proof of the theorem:
 \begin{proof}
  To prove $\cC_A$ is semisimple, we need to show that any surjection $f: X_1\rightarrow X_2$ in $\cC_A$ splits. We first need to verify that $f$ is still surjective in $\cC$: let $C$ be a cokernel of $f$ in $\cC$ with cokernel morphism $c: X_2\rightarrow C$. The proof of \cite[Theorem 2.9]{CKM1} shows that if the tensoring functor $A\tens\bullet$ is right exact, then there is a $\cC$-morphism $\mu_C: A\tens C\rightarrow C$ such that $(C,\mu_C)$ is an object of $\cC_A$ and $c$ is a morphism in $\cC_A$. In fact, $A\tens\bullet$ is right exact because we assume $A$ is rigid, so because $f$ is surjective in $\cC_A$ and $c\circ f=0$, we get $c=0$. This shows that the cokernel of $f$ in $\cC$ is $0$, and thus $f$ is surjective in $\cC$ as well.

Now  because $\cC$ is semisimple and $f$ is surjective in $\cC$, there is a $\cC$-morphism $\sigma: X_2\rightarrow X_1$ such that $f\circ\sigma =\Id_{X_2}$, but we need to ``average $\sigma$ over $A$'' to turn it into a morphism of $A$-modules. Moreover, ``averaging over $A$'' must involve creating two copies of $A$ with the coevaluation $i_A$, since the desired $\cC_A$-morphism should somehow involve the $A$-actions on both $X_1$ and $X_2$.  Indeed, take $S: X_2\rightarrow X_1$ to be the composition
  \begin{align}\label{eqn:S_def}
   X_2\xrightarrow{l_{X_2}^{-1}} \vac\tens X_2\xrightarrow{i_A\tens\Id_{X_2}} (A\tens A)\tens X_2\xrightarrow{\cA_{A,A,X_2}^{-1}} & A\tens(A\tens X_2)\nonumber\\
   &\xrightarrow{\Id_A\tens\mu_{X_2}} A\tens X_2\xrightarrow{\Id_A\tens\sigma} A\tens X_1\xrightarrow{\mu_{X_1}} X_1.
  \end{align}
Diagrammatically,
\begin{align}\label{diag:s_def}
S=
 \begin{matrix}
  \begin{tikzpicture}[scale = .9, baseline = {(current bounding box.center)}, line width=0.75pt]
   \draw (2,0) -- (2,1.5) .. controls (2,1.7) .. (1.7,1.8);
   \draw (1.5,2.2) -- (1.5,3.2);
   \draw (1.3,1.8) .. controls (1,1.7) .. (1,1.5) .. controls (1,.8) and (0,.8) .. (0,1.5) -- (0,3.2) .. controls (0, 3.5) .. (.55,3.8);
   \draw (1.5,3.2) .. controls (1.5,3.5) .. (.95, 3.8);
    \draw (.75,4.2) -- (.75,4.6);
   \draw[dashed] (2,.4) .. controls (.5,.5) .. (.5,1);
   \node at (1.5,2) [draw,minimum width=20pt,minimum height=10pt,thick, fill=white] {$\mu_{X_2}$};
   \node at (2,-.25) {$X_2$};
   \node at (1.5,3)[draw,minimum width=20pt,minimum height=10pt,thick, fill=white] {$\sigma$};
   \node at (.75,4)[draw,minimum width=20pt,minimum height=10pt,thick, fill=white] {$\mu_{X_1}$};
   \node at (0,.9) {$A$};
   \node at (1,.9) {$A$};
   \node at (.75, 4.8) {$X_1$};
  \end{tikzpicture}
 \end{matrix}
 \end{align}
We first show that $S$ is a morphism in $\cC_A$, that is, 
$$\mu_{X_1}\circ(\Id_A\tens S)=S\circ\mu_{X_2}.$$
Beginning with $\mu_{X_1}\circ(\Id_A\tens S)$, we use the associativity of $\mu_{X_1}$, Lemma \ref{rigidlike_lemma}, and the associativity of $\mu_{X_2}$ to calculate:
\begin{align}\label{diag:s_ac_hom}
 \begin{matrix}
  \begin{tikzpicture}[scale = .9, baseline = {(current bounding box.center)}, line width=0.75pt]
   \draw (2,0) -- (2,1.5) .. controls (2,1.7) .. (1.7,1.8);
   \draw (1.5,2.2) -- (1.5,3.2);
   \draw (1.3,1.8) .. controls (1,1.7) .. (1,1.5) .. controls (1,.8) and (0,.8) .. (0,1.5) -- (0,3.2) .. controls (0, 3.5) .. (.55,3.8);
   \draw (1.5,3.2) .. controls (1.5,3.5) .. (.95, 3.8);
    \draw (-.1,5.2) -- (-.1,5.6);
    \draw (-1,0) -- (-1,4.2) .. controls (-1,4.5) .. (-.3,4.8);
    \draw (.75,4.2) .. controls (.75,4.5) .. (.1,4.8);
   \draw[dashed] (2,.4) .. controls (.5,.5) .. (.5,1);
   \node at (1.5,2) [draw,minimum width=20pt,minimum height=10pt,thick, fill=white] {$\mu_{X_2}$};
   \node at (2,-.25) {$X_2$};
   \node at (1.5,3)[draw,minimum width=20pt,minimum height=10pt,thick, fill=white] {$\sigma$};
   \node at (.75,4)[draw,minimum width=20pt,minimum height=10pt,thick, fill=white] {$\mu_{X_1}$};
   \node at (0,.9) {$A$};
   \node at (1,.9) {$A$}; 
   \node at (-.1,5)[draw,minimum width=20pt,minimum height=10pt,thick, fill=white] {$\mu_{X_1}$};
   \node at (-1,-.25) {$A$};
   \node at (-.1, 5.8) {$X_1$};
  \end{tikzpicture}
 \end{matrix} = 
 \begin{matrix}
  \begin{tikzpicture}[scale = .9, baseline = {(current bounding box.center)}, line width=0.75pt]
   \draw (2,0) -- (2,1.5) .. controls (2,1.7) .. (1.7,1.8);
   \draw (1.5,2.2) -- (1.5,3.2);
   \draw (1.3,1.8) .. controls (1,1.7) .. (1,1.5) .. controls (1,.8) and (0,.8) .. (0,1.5) -- (0,3.2) .. controls (0, 3.5) .. (-.3,3.8);
   \draw (1.5,3.2) --(1.5,4) .. controls (1.5,4.5) .. (.7, 4.8);
    \draw (.5,5.2) -- (.5,5.6);
    \draw (-1,0) -- (-1,3.2) .. controls (-1,3.5) .. (-.7,3.8);
    \draw (-.5,4.2) .. controls (-.5,4.5) .. (.3,4.8);
   \draw[dashed] (-1,.4) .. controls (.5,.5) .. (.5,1);
   \node at (1.5,2) [draw,minimum width=20pt,minimum height=10pt,thick, fill=white] {$\mu_{X_2}$};
   \node at (2,-.25) {$X_2$};
   \node at (1.5,3)[draw,minimum width=20pt,minimum height=10pt,thick, fill=white] {$\sigma$};
   \node at (-.5,4)[draw,minimum width=20pt,minimum height=10pt,thick, fill=white] {$\mu_{A}$};
   \node at (0,.9) {$A$};
   \node at (1,.9) {$A$}; 
   \node at (.5,5)[draw,minimum width=20pt,minimum height=10pt,thick, fill=white] {$\mu_{X_1}$};
   \node at (-1,-.25) {$A$};
   \node at (.5, 5.8) {$X_1$};
  \end{tikzpicture}
 \end{matrix} = 
 \begin{matrix}
  \begin{tikzpicture}[scale = .9, baseline = {(current bounding box.center)}, line width=0.75pt]
   \draw (2,0) -- (2,1.5) .. controls (2,1.7) .. (1.7,1.8);
   \draw (1.5,2.2) .. controls (1.5,2.5) .. (2.05,2.8);
   \draw (1.3,1.8) .. controls (1,1.7) .. (1,1.5) .. controls (1,.8) and (0,.8) .. (0,1.5) -- (0,4.2) .. controls (0,4.5) .. (.925,4.8);
   \draw[dashed] (2,.4) .. controls (.5,.5) .. (.5,1);
   \draw (3,0) -- (3,2.2) .. controls (3,2.5) .. (2.45, 2.8);
   \draw (2.25,3.2) -- (2.25, 3.8);
   \draw (2.25,4.2) .. controls (2.25,4.5) .. (1.325, 4.8);
   \draw (1.125,5.2) -- (1.125, 5.6);
   \node at (1.5,2) [draw,minimum width=20pt,minimum height=10pt,thick, fill=white] {$\mu_A$};
    \node at (2.25,3) [draw,minimum width=20pt,minimum height=10pt,thick, fill=white] {$\mu_{X_2}$};
     \node at (2.25,4) [draw,minimum width=20pt,minimum height=10pt,thick, fill=white] {$\sigma$};
      \node at (1.125,5) [draw,minimum width=20pt,minimum height=10pt,thick, fill=white] {$\mu_{X_1}$};
   \node at (2,-.25) {$A$};
   \node at (0,.9) {$A$};
   \node at (1,.9) {$A$};
   \node at (3,-.25) {$X_2$};
   \node at (1.125, 5.8) {$X_1$};
  \end{tikzpicture}
 \end{matrix} =
 \begin{matrix}
  \begin{tikzpicture}[scale = .9, baseline = {(current bounding box.center)}, line width=0.75pt]
   \draw (2,0) -- (2,1.5) .. controls (2,1.7) .. (1.7,1.8);
   \draw (1.5,2.2) -- (1.5,3.2);
   \draw (1.3,1.8) .. controls (1,1.7) .. (1,1.5) .. controls (1,.8) and (0,.8) .. (0,1.5) -- (0,3.2) .. controls (0, 3.5) .. (.55,3.8);
   \draw (1.5,3.2) .. controls (1.5,3.5) .. (.95, 3.8);
    \draw (.75,4.2) -- (.75,4.6);
    \draw (1.5,-1) .. controls (1.5,-.7) .. (1.8,-.4);
    \draw (2.5,-1) .. controls (2.5,-.7) .. (2.2,-.4);
   \draw[dashed] (2,.4) .. controls (.5,.5) .. (.5,1);
   \node at (1.5,2) [draw,minimum width=20pt,minimum height=10pt,thick, fill=white] {$\mu_{X_2}$};
   \node at (2.5,-1.25) {$X_2$};
   \node at (1.5,-1.25) {$A$};
   \node at (1.5,3)[draw,minimum width=20pt,minimum height=10pt,thick, fill=white] {$\sigma$};
   \node at (.75,4)[draw,minimum width=20pt,minimum height=10pt,thick, fill=white] {$\mu_{X_1}$};
   \node at (0,.9) {$A$};
   \node at (1,.9) {$A$};
   \node at (.75, 4.8) {$X_1$};
      \node at (2,-.2)[draw,minimum width=20pt,minimum height=10pt,thick, fill=white] {$\mu_{X_2}$};
  \end{tikzpicture},
 \end{matrix}
\end{align}
which is indeed $S\circ\mu_{X_2}$. Next, we calculate $f\circ S$ using the fact that $f$ is a $\cC_A$-morphism, the identity $f\circ\sigma=\Id_{X_2}$, the associativity of $\mu_{X_2}$, and Assumption \ref{assum:main_assum_1}(3):
\begin{align}\label{diag:f_s=id}
   \begin{matrix}
  \begin{tikzpicture}[scale = .9, baseline = {(current bounding box.center)}, line width=0.75pt]
   \draw (2,0) -- (2,1.5) .. controls (2,1.7) .. (1.7,1.8);
   \draw (1.5,2.2) -- (1.5,3.2);
   \draw (1.3,1.8) .. controls (1,1.7) .. (1,1.5) .. controls (1,.8) and (0,.8) .. (0,1.5) -- (0,3.2) .. controls (0, 3.5) .. (.55,3.8);
   \draw (1.5,3.2) .. controls (1.5,3.5) .. (.95, 3.8);
    \draw (.75,5.2) -- (.75,5.6);
    \draw (.75,4.2) -- (.75,4.8);
   \draw[dashed] (2,.4) .. controls (.5,.5) .. (.5,1);
   \node at (1.5,2) [draw,minimum width=20pt,minimum height=10pt,thick, fill=white] {$\mu_{X_2}$};
   \node at (2,-.25) {$X_2$};
   \node at (1.5,3)[draw,minimum width=20pt,minimum height=10pt,thick, fill=white] {$\sigma$};
   \node at (.75,4)[draw,minimum width=20pt,minimum height=10pt,thick, fill=white] {$\mu_{X_1}$};
   \node at (.75,5)[draw,minimum width=20pt,minimum height=10pt,thick, fill=white] {$f$};
   \node at (0,.9) {$A$};
   \node at (1,.9) {$A$};
   \node at (.75, 5.8) {$X_2$};
  \end{tikzpicture}
 \end{matrix} =
 \begin{matrix}
  \begin{tikzpicture}[scale = .9, baseline = {(current bounding box.center)}, line width=0.75pt]
   \draw (2,0) -- (2,1.5) .. controls (2,1.7) .. (1.7,1.8);
   \draw (1.5,2.2) -- (1.5,3.2);
   \draw (1.3,1.8) .. controls (1,1.7) .. (1,1.5) .. controls (1,.8) and (0,.8) .. (0,1.5) -- (0,4.2) .. controls (0, 4.5) .. (.55,4.8);
   \draw (1.5,4.2) .. controls (1.5,4.5) .. (.95, 4.8);
    \draw (.75,5.2) -- (.75,5.6);
    \draw (1.5,3.2) -- (1.5,3.8);
   \draw[dashed] (2,.4) .. controls (.5,.5) .. (.5,1);
   \node at (1.5,2) [draw,minimum width=20pt,minimum height=10pt,thick, fill=white] {$\mu_{X_2}$};
   \node at (2,-.25) {$X_2$};
   \node at (1.5,3)[draw,minimum width=20pt,minimum height=10pt,thick, fill=white] {$\sigma$};
   \node at (.75,5)[draw,minimum width=20pt,minimum height=10pt,thick, fill=white] {$\mu_{X_2}$};
   \node at (1.5,4)[draw,minimum width=20pt,minimum height=10pt,thick, fill=white] {$f$};
   \node at (0,.9) {$A$};
   \node at (1,.9) {$A$};
   \node at (.75, 5.8) {$X_2$};
  \end{tikzpicture}
 \end{matrix} =
 \begin{matrix}
  \begin{tikzpicture}[scale = .9, baseline = {(current bounding box.center)}, line width=0.75pt]
   \draw (2,0) -- (2,1.5) .. controls (2,1.8) .. (1.7,2.1);
   \draw (1.3,2.1) .. controls (1,1.8) .. (1,1.5) .. controls (1,.8) and (0,.8) .. (0,1.5) -- (0,2.5) .. controls (0, 3.2) .. (.55,3.5);
   \draw (1.5,2.5) .. controls (1.5,3.2) .. (.95, 3.5);
    \draw (.75,3.9) -- (.75,4.5);
   \draw[dashed] (2,.4) .. controls (.5,.5) .. (.5,1);
   \node at (1.5,2.3) [draw,minimum width=20pt,minimum height=10pt,thick, fill=white] {$\mu_{X_2}$};
   \node at (2,-.25) {$X_2$};
   \node at (.75,3.7)[draw,minimum width=20pt,minimum height=10pt,thick, fill=white] {$\mu_{X_2}$};
   \node at (0,.9) {$A$};
   \node at (1,.9) {$A$};
   \node at (.75, 4.75) {$X_2$};
  \end{tikzpicture}
  \end{matrix} =
  \begin{matrix}
  \begin{tikzpicture}[scale = .9, baseline = {(current bounding box.center)}, line width=0.75pt]
   \draw (2,0) -- (2,2.5) .. controls (2,3.2) .. (1.45,3.5);
   \draw (.7,2.1) .. controls (1,1.8) .. (1,1.5) .. controls (1,.8) and (0,.8) .. (0,1.5) .. controls (0, 1.8) .. (.3,2.1);
   \draw (.5,2.5) .. controls (.5,3.2) .. (1.05, 3.5);
    \draw (1.25,3.9) -- (1.25,4.5);
   \draw[dashed] (2,.4) .. controls (.5,.5) .. (.5,1);
   \node at (.5,2.3) [draw,minimum width=20pt,minimum height=10pt,thick, fill=white] {$\mu_{A}$};
   \node at (2,-.25) {$X_2$};
   \node at (1.25,3.7)[draw,minimum width=20pt,minimum height=10pt,thick, fill=white] {$\mu_{X_2}$};
   \node at (0,.9) {$A$};
   \node at (1,.9) {$A$};
   \node at (1.25, 4.75) {$X_2$};
  \end{tikzpicture}
 \end{matrix} = [A:\vac]
 \begin{matrix}
  \begin{tikzpicture}[scale = .9, baseline = {(current bounding box.center)}, line width=0.75pt]
   \draw (1,0) -- (1,2.5) .. controls (1,3) .. (.7,3.3);
   \draw (0,2.2) -- (0,2.5) .. controls (0,3) .. (.3, 3.3);
    \draw (.5,3.7) -- (.5,4.5);
   \draw[dashed] (1,.8) .. controls (0,1) .. (0,1.8);
   \node at (0,2) [draw,minimum width=20pt,minimum height=10pt,thick, fill=white] {$\iota_A$};
   \node at (1,-.25) {$X_2$};
   \node at (.5,3.5)[draw,minimum width=20pt,minimum height=10pt,thick, fill=white] {$\mu_{X_2}$};
   \node at (.5, 4.75) {$X_2$};
  \end{tikzpicture}
 \end{matrix},
 \end{align}
which is $[A:\vac]\Id_{X_2}$ by the unit property of $\mu_{X_2}$. This shows that $s=\frac{1}{[A:\vac]}S$ is a $\cC_A$-morphism that satisfies $f\circ s=\Id_{X_2}$, completing the proof of the theorem.
\end{proof}

\begin{rema}
 Similar arguments show that if Assumption \ref{assum:main_assum_1} holds and $[A:\vac]\neq 0$, then the right $A$-module and $A$-bimodule categories also inherit semisimplicity from $\cC$. 
\end{rema}

\begin{exam}
 Let us examine the proof of the theorem in the case that $\cC$ is the category of $\mathbb{F}$-vector spaces and $A=\mathbb{F}[G]$ is the group algebra of a finite group. In this case, the coevaluation is given by
 \begin{equation*}
  i_A: 1\mapsto\sum_{g\in G} g\otimes g^{-1}
 \end{equation*}
and $[A:\vac]=\vert G\vert$. Thus if $f: X_1\rightarrow X_2$ is a $G$-module surjection and $\sigma: X_2\rightarrow X_1$ is a linear map such that $f\circ\sigma=\Id_{X_2}$, then the $G$-module homomorphism $S: X_2\rightarrow X_1$ of \eqref{eqn:S_def} is given by
\begin{align*}
 x_2\mapsto 1\otimes x_2\mapsto\sum_{g\in G} (g\otimes g^{-1})\otimes x_2\mapsto & \sum_{g\in G} g\otimes(g^{-1}\otimes x_2)\nonumber\\
 & \mapsto \sum_{g\in G} g\otimes\sigma(g^{-1}\cdot x_2) \mapsto\sum_{g\in G} g\cdot\sigma(g^{-1}\cdot x_2).
\end{align*}
That is, given a $G$-module surjection $f$, the $G$-module section $s: X_2\rightarrow X_1$ is obtained by averaging all conjugates  $g\circ\sigma\circ g^{-1}$ for $g\in G$:
\begin{equation*}
 s(x_2)=\frac{1}{\vert G\vert}\sum_{g\in G} g\cdot\sigma(g^{-1}\cdot x_2)
\end{equation*}
for $x_2\in X_2$. This averaging over $G$ is familiar from standard proofs of Maschke's Theorem.
\end{exam}

\begin{rema}\label{rema:AC_ss_to_C_ss}
See for example \cite[Lemma 4.21]{McR} for one converse to Theorem \ref{thm:main_cat_thm_1}: if $\cC_A$ is semisimple, then so is $\cC$ provided that the tensoring functor $A\tens\bullet$ preserves surjections and Assumption \ref{assum:main_assum_1}(1) holds. No assumption on the index $[A:\vac]$ is necessary. .
\end{rema}

\begin{rema}
 See \cite[Section 6]{KZ} for conditions under which a different converse to Theorem \ref{thm:main_cat_thm_1} holds: if $\cC$ and $\cC_A$ are both semisimple, then $[A:\vac]\neq 0$.
\end{rema}

\section{Semisimple algebras in braided tensor categories}\label{sec:CA_ss_to_C_ss}

In this section, we assume that the $\mathbb{F}$-linear tensor category $(\cC,\tens,\vac,l,r,\cA)$ of the previous section is braided, with natural braiding isomorphisms $\cR$.
\begin{defi}
 An algebra $(A,\mu_A,\iota_A)$ in $\cC$ is \textit{commutative} if $\mu_A\circ\cR_{A,A}=\mu_A$.
\end{defi}

If the tensoring functor $A\tens\bullet$ is right exact, then $\cC_A$ is a tensor category (see for example \cite[Section 1]{KO} or \cite[Section 2]{CKM1}). But we only get a braiding on the full subcategory $\cC_A^0$ of ``local'' $A$-modules, which are objects $(X,\mu_X)$ that satisfy
\begin{equation*}
 \mu_X\circ\cR_{X,A}\circ\cR_{A,X}=\mu_X.
\end{equation*}
So although Remark \ref{rema:AC_ss_to_C_ss} says that semisimplicity of $\cC_A$ implies semisimplicity of $\cC$ under mild conditions, a more interesting question for commutative algebras in braided tensor categories is whether semisimplicity of $\cC_A^0$ implies semisimplicity of $\cC$. The main result of this section is that the answer is yes if $\cC$ is rigid and the index $[A:\vac]$ is non-zero:
\begin{theo}\label{thm:main_cat_thm_2}
 Suppose $\cC$ is a rigid braided tensor category and $A$ is a commutative algebra in $\cC$ that satisfies Assumption \ref{assum:main_assum_1}. If $\cC_A^0$ is semisimple and $[A:\vac]\neq 0$, then $\cC$ is semisimple.
\end{theo}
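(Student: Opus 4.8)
The plan is to deduce semisimplicity of $\cC$ from the single fact that its unit object $\vac$ is \emph{projective}. Once this is known, rigidity closes the argument immediately: for every object $X$ the functor $-\tens X$ is exact and has right adjoint $-\tens{}^*X$, so it preserves projectives, whence $X\cong\vac\tens X$ is projective. Thus every object of $\cC$ is projective, and in particular every surjection in $\cC$ splits, which is exactly semisimplicity in the sense of Section~\ref{sec:C_ss_to_CA_ss} (compare \cite[Corollary~4.2.13]{EGNO}). Therefore I would reduce the whole theorem to proving that every surjection $p\colon X\twoheadrightarrow\vac$ in $\cC$ admits a section.

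To produce such a section I would transport the problem into the semisimple category $\cC_A^0$ using two functors. The first is induction $\cF=A\tens-\colon\cC\to\cC_A$, which is exact because $A$ is rigid (so $A\tens-$ has both adjoints). The second, following \cite{KO,FFRS,McR}, is the projection functor $\Pi\colon\cC_A\to\cC_A^0$ onto local modules whose construction is precisely where $[A:\vac]\neq 0$ is used: for each $M\in\cC_A$ one has a natural idempotent $p_M\in\Endo_{\cC_A}(M)$, built from $i_A$, the multiplication, and the double braiding and normalized by $[A:\vac]^{-1}$, whose image $\Pi M$ is local and is a direct summand of $M$ via an inclusion $j_M\colon\Pi M\hookrightarrow M$. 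Since the idempotents $p_M$ exhibit $\Pi$ as a direct summand of the identity functor on $\cC_A$, the functor $\Pi$ is exact, and hence so is $\cF^0:=\Pi\circ\cF\colon\cC\to\cC_A^0$. Finally, as $A$ is commutative its regular module is local, so $\cF^0\vac=\Pi(A\tens\vac)\cong A$.

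Now, given a surjection $p\colon X\twoheadrightarrow\vac$, exactness of $\cF^0$ makes $\cF^0(p)\colon\cF^0 X\to\cF^0\vac\cong A$ a surjection in $\cC_A^0$. Because $\cC_A^0$ is semisimple it splits, yielding $t\colon A\to\cF^0 X$ with $\cF^0(p)\circ t=\Id_A$. Composing with the inclusion gives $\widetilde{t}:=j_{A\tens X}\circ t\colon A\to A\tens X$, and naturality of the idempotents yields $\cF(p)\circ\widetilde{t}=\Id_A$, that is, $(\Id_A\tens p)\circ\widetilde{t}=r_A^{-1}$. I would then set
\[
 s:=l_X\circ(\varepsilon_A\tens\Id_X)\circ\widetilde{t}\circ\iota_A\colon\vac\longrightarrow X,
\]
and verify $p\circ s=\Id_\vac$. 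Using naturality of the unit isomorphisms, the interchange law, the relation $(\Id_A\tens p)\circ\widetilde{t}=r_A^{-1}$, and finally $\varepsilon_A\circ\iota_A=\Id_\vac$ from Assumption~\ref{assum:main_assum_1}(1), the composite collapses to $l_\vac\circ r_\vac^{-1}\circ\varepsilon_A\circ\iota_A=\Id_\vac$. Hence $p$ splits and $\vac$ is projective, which by the first paragraph proves that $\cC$ is semisimple.

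The main obstacle is the construction and basic properties of the projection functor $\Pi$: that the $p_M$ are genuinely idempotent, natural, and have local image realized as a direct summand, all of which depend in an essential way on $[A:\vac]\neq 0$ (this non-vanishing is the categorical analogue of dividing by $\vert G\vert$ in Maschke's Theorem). I would import these facts from \cite{KO,FFRS,McR} rather than reprove them, in the same spirit that Theorem~\ref{thm:main_cat_thm_1} was quoted above. The remaining delicate point is purely bookkeeping: tracking the associativity and unit isomorphisms in the verification that $p\circ s=\Id_\vac$, which is cleanest in the graphical calculus used in the proof of Theorem~\ref{thm:main_cat_thm_1}.
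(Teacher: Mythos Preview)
Your proposal is correct and follows essentially the same route as the paper: establish projectivity of $\vac$ by transporting the problem into $\cC_A^0$ via $\Pi\circ\cF$ and extracting a section with $\varepsilon_A$ and $\iota_A$, then use rigidity to propagate projectivity to all objects (the paper packages these two steps as Lemmas~\ref{lem:unit_proj} and~\ref{lem:ss_from_1_proj}). One small correction: the reason $-\tens X$ preserves projectives is that its \emph{right adjoint} $-\tens{}^*X$ is exact (which holds since ${}^*X$ is again rigid), not that $-\tens X$ itself is exact; and your reduction of projectivity of $\vac$ to ``every epi onto $\vac$ splits'' is valid but implicitly uses the standard pullback argument in an abelian category.
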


To prove the theorem, we will use a projection functor $\Pi: \cC_A\rightarrow\cC_A^0$ that was used in \cite{KO, McR}; it is also a special case of the local induction functor introduced in \cite{FFRS} for not-necessarily-commutative symmetric special Frobenius algebras in ribbon categories. For an object $(X,\mu_X)$ in $\cC_A$, define $\pi_X\in\Endo_\cC(X)$ by the diagrammatic formula
\begin{equation*}
\pi_X=\frac{1}{[A:\vac]}
  \begin{matrix}
  \begin{tikzpicture}[scale = .9, baseline = {(current bounding box.center)}, line width=0.75pt]
   \node (w) at (2,-0.3) {$X$};
   \draw(1,2.25) .. controls (1,1.75) and (2,2) .. (2,1.5) -- (2,0);
   \draw[white, double=black, line width = 3pt ] (2,2.25) .. controls (2,1.75) and (1,2) .. (1,1.5) .. controls (1,0.8) and (0,0.8) .. (0,1.5) -- (0, 3.5) .. controls (0,4) .. (0.55,4.28);
   \draw[dashed] (2,0.4) .. controls (1,.4) .. (0.5,1);
   \draw (1.3, 3.3) .. controls (1,3.15) .. (1,3) .. controls (1,2.5) and (2,2.75) .. (2,2.25);
   \draw[white, double=black, line width = 3pt ] (1.7, 3.3) .. controls (2, 3.15) .. (2,3) .. controls (2,2.5) and (1,2.75) .. (1,2.25);
   \draw (1.5, 3.72) .. controls (1.5,4) .. (.95, 4.28);
   \draw (.75,4.72) -- (.75,5.25);
   \node (mu1) at (1.5, 3.5) [draw,minimum width=20pt,minimum height=10pt,thick, fill=white] {$\mu_X$};
   \node (mu2) at (.75, 4.5) [draw,minimum width=20pt,minimum height=10pt,thick, fill=white] {$\mu_X$};
   \node at (.75, 5.5) {$X$};
   \node at (-.1,.9) {$A$};
   \node at (1.1,.9) {$A$};
  \end{tikzpicture}
 \end{matrix}
\end{equation*}
Note that $\pi_X$ is defined somewhat differently in \cite{KO}, but the definitions are equivalent using the commutativity of $\mu_A$. By \cite[Lemma 4.3]{KO} or the $g=1$ case of \cite[Theorem 3.3]{McR}, $\pi_X$ is a morphism in $\cC_A$ and the image $\pi_X(X)$ is an object of $\cC_A^0$. 
\begin{lemma}\label{lem:piX_id_on_local}
 If $(X,\mu_X)$ is an object of $\cC_A^0$, then $\pi_X=\Id_X$.
\end{lemma}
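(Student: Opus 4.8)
The plan is to read the diagram defining $\pi_X$ as the composite in which the coevaluation $i_A$ produces two copies of $A$, one copy executes the full monodromy with $X$ (the two crossings appearing in the diagram) before acting via the lower $\mu_X$, and the second copy then acts via the upper $\mu_X$. Up to the unit and associativity isomorphisms this reads, schematically, as
\begin{equation*}
 \pi_X=\frac{1}{[A:\vac]}\,\mu_X\circ(\Id_A\tens\mu_X)\circ\bigl(\Id_A\tens(\cR_{X,A}\circ\cR_{A,X})\bigr)\circ(\cdots)\circ(i_A\tens\Id_X)\circ l_X^{-1},
\end{equation*}
where $(\cdots)$ absorbs the relevant associators. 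The central observation is that the two crossings assemble into the double braiding $\cR_{X,A}\circ\cR_{A,X}$ sitting immediately below a $\mu_X$, so that the locality hypothesis applies directly.

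First I would invoke the locality condition $\mu_X\circ\cR_{X,A}\circ\cR_{A,X}=\mu_X$ to erase the monodromy, leaving $X$ acted on successively by the two $A$-strands emanating from $i_A$. Next I would use associativity of the $A$-action, $\mu_X\circ(\Id_A\tens\mu_X)=\mu_X\circ(\mu_A\tens\Id_X)\circ\cA_{A,A,X}$, to fuse the two successive actions into a single action of $X$ preceded by the multiplication $\mu_A$ of the two copies of $A$. Since those two copies are exactly the output of $i_A$, the composite $\mu_A\circ i_A$ now appears, and Assumption \ref{assum:main_assum_1}(3) rewrites it as $[A:\vac]\iota_A$, cancelling the prefactor $\frac{1}{[A:\vac]}$. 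Finally the left unit axiom $\mu_X\circ(\iota_A\tens\Id_X)=l_X$, composed with the remaining $l_X^{-1}$, collapses everything to $\Id_X$.

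The main obstacle is the bookkeeping at the first step: one must check that the two crossings in the diagram genuinely compose to the monodromy $\cR_{X,A}\circ\cR_{A,X}$ in the correct order, and that it is positioned so that locality (a statement about $\mu_X$ precomposed with the double braiding) is applicable, rather than some other braiding configuration for which locality would not immediately apply. Once the monodromy is correctly identified and removed, the remaining manipulations --- threading the associativity isomorphisms $\cA_{A,A,X}$ through, cancelling them against the associators implicit in the diagram, and applying the unit isomorphisms --- are routine coherence computations that I would carry out diagrammatically, exactly as in the simplifications used to prove Theorem \ref{thm:main_cat_thm_1}.
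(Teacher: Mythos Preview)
Your proposal is correct and follows exactly the paper's own argument: apply locality to erase the double braiding, use associativity of $\mu_X$ to produce $\mu_A\circ i_A$, replace this by $[A:\vac]\iota_A$ via Assumption \ref{assum:main_assum_1}(3), and finish with the left unit property. Your caution about identifying the two crossings as the monodromy $\cR_{X,A}\circ\cR_{A,X}$ directly below $\mu_X$ is well placed but routine---the paper simply reads it off the diagram.
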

\begin{proof}
 We calculate
 \begin{align*}
  \begin{matrix}
  \begin{tikzpicture}[scale = .9, baseline = {(current bounding box.center)}, line width=0.75pt]
   \node (w) at (2,-0.3) {$X$};
   \draw(1,2.25) .. controls (1,1.75) and (2,2) .. (2,1.5) -- (2,0);
   \draw[white, double=black, line width = 3pt ] (2,2.25) .. controls (2,1.75) and (1,2) .. (1,1.5) .. controls (1,0.8) and (0,0.8) .. (0,1.5) -- (0, 3.5) .. controls (0,4) .. (0.55,4.28);
   \draw[dashed] (2,0.4) .. controls (1,.4) .. (0.5,1);
   \draw (1.3, 3.3) .. controls (1,3.15) .. (1,3) .. controls (1,2.5) and (2,2.75) .. (2,2.25);
   \draw[white, double=black, line width = 3pt ] (1.7, 3.3) .. controls (2, 3.15) .. (2,3) .. controls (2,2.5) and (1,2.75) .. (1,2.25);
   \draw (1.5, 3.72) .. controls (1.5,4) .. (.95, 4.28);
   \draw (.75,4.72) -- (.75,5.25);
   \node (mu1) at (1.5, 3.5) [draw,minimum width=20pt,minimum height=10pt,thick, fill=white] {$\mu_X$};
   \node (mu2) at (.75, 4.5) [draw,minimum width=20pt,minimum height=10pt,thick, fill=white] {$\mu_X$};
   \node at (.75, 5.5) {$X$};
   \node at (-.1,.9) {$A$};
   \node at (1.1,.9) {$A$};
  \end{tikzpicture}
 \end{matrix} =
 \begin{matrix}
  \begin{tikzpicture}[scale = .9, baseline = {(current bounding box.center)}, line width=0.75pt]
   \node (w) at (2,-0.3) {$X$};
   \draw (2,0) -- (2,1.5) .. controls (2,1.8) .. (1.7,2.1);
   \draw[white, double=black, line width = 3pt ] (1,1.5) .. controls (1,0.8) and (0,0.8) .. (0,1.5) -- (0, 2.3) .. controls (0,2.8) .. (0.55,3.08);
   \draw (1,1.5) .. controls (1,1.8) .. (1.3,2.1);
   \draw[dashed] (2,0.4) .. controls (1,.4) .. (0.5,1);
   \draw (1.5, 2.52) .. controls (1.5,2.8) .. (.95, 3.08);
   \draw (.75,3.52) -- (.75,4.05);
   \node (mu1) at (1.5, 2.3) [draw,minimum width=20pt,minimum height=10pt,thick, fill=white] {$\mu_X$};
   \node (mu2) at (.75, 3.3) [draw,minimum width=20pt,minimum height=10pt,thick, fill=white] {$\mu_X$};
   \node at (.75, 4.3) {$X$};
   \node at (-.1,.9) {$A$};
   \node at (1.1,.9) {$A$};
  \end{tikzpicture}
 \end{matrix} =
 \begin{matrix}
  \begin{tikzpicture}[scale = .9, baseline = {(current bounding box.center)}, line width=0.75pt]
   \node (w) at (2,-0.3) {$X$};
   \draw (2,0) -- (2,2.5) .. controls (2,2.8) .. (1.45,3.1);
   \draw[white, double=black, line width = 3pt ] (1,1.5) .. controls (1,0.8) and (0,0.8) .. (0,1.5) .. controls (0,1.8) .. (.3,2.1);
   \draw (1,1.5) .. controls (1,1.8) .. (.7,2.1);
   \draw[dashed] (2,0.4) .. controls (1,.4) .. (0.5,1);
   \draw (.5, 2.5) .. controls (.5,2.8) .. (1.05, 3.1);
   \draw (1.25,3.52) -- (1.25,4.05);
   \node (mu1) at (.5, 2.3) [draw,minimum width=20pt,minimum height=10pt,thick, fill=white] {$\mu_A$};
   \node (mu2) at (1.25, 3.3) [draw,minimum width=20pt,minimum height=10pt,thick, fill=white] {$\mu_X$};
   \node at (1.25, 4.3) {$X$};
   \node at (-.1,.9) {$A$};
   \node at (1.1,.9) {$A$};
  \end{tikzpicture}
 \end{matrix} = [A:\vac]
 \begin{matrix}
  \begin{tikzpicture}[scale = .9, baseline = {(current bounding box.center)}, line width=0.75pt]
   \draw (1,.2) -- (1,2.5) .. controls (1,3) .. (.7,3.3);
   \draw (0,2.2) -- (0,2.5) .. controls (0,3) .. (.3, 3.3);
    \draw (.5,3.7) -- (.5,4.3);
   \draw[dashed] (1,.8) .. controls (0,1) .. (0,1.8);
   \node at (0,2) [draw,minimum width=20pt,minimum height=10pt,thick, fill=white] {$\iota_A$};
   \node at (1,-.05) {$X$};
   \node at (.5,3.5)[draw,minimum width=20pt,minimum height=10pt,thick, fill=white] {$\mu_{X}$};
   \node at (.5, 4.55) {$X$};
  \end{tikzpicture}
 \end{matrix} = [A:\vac]\Id_X
 \end{align*}
using the definition of $\cC_A^0$, the associativity of $\mu_X$, Assumption \ref{assum:main_assum_1}(3), and the left unit property of $\mu_X$.
\end{proof}
\begin{lemma}\label{lem:proj_natural}
 If $f: X_1\rightarrow X_2$ is a morphism in $\cC_A$, then $f\circ\pi_{X_1}=\pi_{X_2}\circ f$.
\end{lemma}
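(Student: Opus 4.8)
The plan is to prove this naturality statement by sliding the box $f$ through the defining string diagram of $\pi_X$ from its bottom (input) strand to its top (output) strand. The key observation is that the diagram defining $\pi_X$ is assembled from three kinds of elementary pieces, each of which commutes with $f$ in the appropriate sense: the coevaluation $i_A\colon\vac\to A\tens A$, which produces the two $A$-strands and never meets the $X$-strand; the braiding isomorphisms $\cR_{A,X}$ and $\cR_{X,A}$, which are natural in their $X$-slot; and the two copies of the module action $\mu_X$, for which $f$ is by hypothesis an intertwiner, that is, $f\circ\mu_{X_1}=\mu_{X_2}\circ(\Id_A\tens f)$.

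First I would start from $\pi_{X_2}\circ f$, whose diagram is obtained by attaching the box $f$ to the incoming $X_1$-strand at the bottom of the $\pi_{X_2}$-diagram; the scalar $\tfrac{1}{[A:\vac]}$ and the $A$-strands produced by $i_A$ are untouched by this. I would then push $f$ upward along the $X$-strand. It passes the coevaluation $i_A$ with no interaction, since $i_A$ does not involve the $X$-strand. It passes each braiding crossing by naturality of $\cR$ in the $X$-variable, namely $\cR_{A,X_2}\circ(\Id_A\tens f)=(f\tens\Id_A)\circ\cR_{A,X_1}$ together with the analogous identity for $\cR_{X,A}$, so that the $A$-strands emanating from $i_A$ are carried past $f$ unchanged. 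Finally, it passes the lower and then the upper $\mu_X$-box using the intertwining property of $f$, each application turning a $\mu_{X_2}$-box into a $\mu_{X_1}$-box as $f$ moves past it.

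Once $f$ has been moved above both multiplication boxes, the remaining diagram is precisely the $\pi_{X_1}$-diagram with $f$ attached to its outgoing strand, which is $f\circ\pi_{X_1}$. Since no step alters the $A$-strands, the coevaluation, or the braidings, and the only change is that the subscripts on the two module-action boxes pass from $X_2$ to $X_1$, the two expressions agree, establishing the identity $f\circ\pi_{X_1}=\pi_{X_2}\circ f$.

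I do not expect any genuine obstacle here; the argument is a routine naturality computation of the same flavor as those carried out in the proof of Theorem \ref{thm:main_cat_thm_1}. The only point requiring care is the bookkeeping at the braiding crossings, to ensure that naturality of $\cR$ is invoked in the correct slot and orientation, so that $f$ is slid through each crossing without disturbing the strands coming from $i_A$. This is exactly what is justified by the functoriality of $\tens$ and the naturality of $\cR$, so the verification is entirely mechanical.
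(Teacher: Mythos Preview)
Your proposal is correct and follows essentially the same argument as the paper: both slide $f$ along the $X$-strand through the braidings (by naturality of $\cR$) and through the two module actions (by the $\cC_A$-morphism property), with the associativity and unit isomorphisms handled implicitly by the graphical calculus. The only cosmetic difference is that the paper runs the computation from $f\circ\pi_{X_1}$ down to $\pi_{X_2}\circ f$, while you push $f$ in the opposite direction.
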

\begin{proof}
 This follows from the definition of morphisms in $\cC_A$ and the naturality of the braiding, associativity, and left unit isomorphisms in $\cC$:
 \begin{align*}
  \begin{matrix}
  \begin{tikzpicture}[scale = .9, baseline = {(current bounding box.center)}, line width=0.75pt]
   \node (w) at (2,-0.3) {$X_1$};
   \draw(1,2.25) .. controls (1,1.75) and (2,2) .. (2,1.5) -- (2,0);
   \draw[white, double=black, line width = 3pt ] (2,2.25) .. controls (2,1.75) and (1,2) .. (1,1.5) .. controls (1,0.8) and (0,0.8) .. (0,1.5) -- (0, 3.5) .. controls (0,4) .. (0.55,4.28);
   \draw[dashed] (2,0.4) .. controls (1,.4) .. (0.5,1);
   \draw (1.3, 3.3) .. controls (1,3.15) .. (1,3) .. controls (1,2.5) and (2,2.75) .. (2,2.25);
   \draw[white, double=black, line width = 3pt ] (1.7, 3.3) .. controls (2, 3.15) .. (2,3) .. controls (2,2.5) and (1,2.75) .. (1,2.25);
   \draw (1.5, 3.72) .. controls (1.5,4) .. (.95, 4.28);
   \draw (.75,5.72) -- (.75,6.25);
   \draw (.75, 4.7) -- (.75, 5.3);
   \node (mu1) at (1.5, 3.5) [draw,minimum width=20pt,minimum height=10pt,thick, fill=white] {$\mu_{X_1}$};
   \node (mu2) at (.75, 4.5) [draw,minimum width=20pt,minimum height=10pt,thick, fill=white] {$\mu_{X_1}$};
   \node at (.75, 5.5) [draw,minimum width=20pt,minimum height=10pt,thick, fill=white] {$f$};
   \node at (.75, 6.5) {$X_2$};
   \node at (-.1,.9) {$A$};
   \node at (1.1,.9) {$A$};
  \end{tikzpicture}
 \end{matrix} = 
 \begin{matrix}
  \begin{tikzpicture}[scale = .9, baseline = {(current bounding box.center)}, line width=0.75pt]
   \node (w) at (2,-0.3) {$X_1$};
   \draw(1,2.25) .. controls (1,1.75) and (2,2) .. (2,1.5) -- (2,0);
   \draw[white, double=black, line width = 3pt ] (2,2.25) .. controls (2,1.75) and (1,2) .. (1,1.5) .. controls (1,0.8) and (0,0.8) .. (0,1.5) -- (0, 4.5) .. controls (0,5) .. (0.55,5.28);
   \draw[dashed] (2,0.4) .. controls (1,.4) .. (0.5,1);
   \draw (1.3, 3.3) .. controls (1,3.15) .. (1,3) .. controls (1,2.5) and (2,2.75) .. (2,2.25);
   \draw[white, double=black, line width = 3pt ] (1.7, 3.3) .. controls (2, 3.15) .. (2,3) .. controls (2,2.5) and (1,2.75) .. (1,2.25);
   \draw (1.5, 4.72) .. controls (1.5,5) .. (.95, 5.28);
   \draw (.75,5.72) -- (.75,6.25);
   \draw (1.5, 3.7) -- (1.5, 4.3);
   \node (mu1) at (1.5, 3.5) [draw,minimum width=20pt,minimum height=10pt,thick, fill=white] {$\mu_{X_1}$};
   \node (mu2) at (.75, 5.5) [draw,minimum width=20pt,minimum height=10pt,thick, fill=white] {$\mu_{X_2}$};
   \node at (1.5, 4.5) [draw,minimum width=20pt,minimum height=10pt,thick, fill=white] {$f$};
   \node at (.75, 6.5) {$X_2$};
   \node at (-.1,.9) {$A$};
   \node at (1.1,.9) {$A$};
  \end{tikzpicture}
 \end{matrix} =
 \begin{matrix}
  \begin{tikzpicture}[scale = .9, baseline = {(current bounding box.center)}, line width=0.75pt]
   \node (w) at (2,-0.3) {$X_1$};
   \draw(1,2.25) .. controls (1,1.75) and (2,2) .. (2,1.5) -- (2,0);
   \draw[white, double=black, line width = 3pt ] (2,2.25) .. controls (2,1.75) and (1,2) .. (1,1.5) .. controls (1,0.8) and (0,0.8) .. (0,1.5) -- (0, 4.5) .. controls (0,5) .. (0.55,5.28);
   \draw[dashed] (2,0.4) .. controls (1,.4) .. (0.5,1);
   \draw (1.3,4.3) .. controls (1,4) .. (1,3.7) -- (1,3) .. controls (1,2.5) and (2,2.75) .. (2,2.25);
   \draw[white, double=black, line width = 3pt ] (2, 3.3) -- (2,3) .. controls (2,2.5) and (1,2.75) .. (1,2.25);
   \draw (1.5, 4.72) .. controls (1.5,5) .. (.95, 5.28);
   \draw (.75,5.72) -- (.75,6.25);
   \draw (2, 3.7) .. controls (2,4) .. (1.7, 4.3);
   \node (mu1) at (1.5, 4.5) [draw,minimum width=20pt,minimum height=10pt,thick, fill=white] {$\mu_{X_2}$};
   \node (mu2) at (.75, 5.5) [draw,minimum width=20pt,minimum height=10pt,thick, fill=white] {$\mu_{X_2}$};
   \node at (2, 3.5) [draw,minimum width=20pt,minimum height=10pt,thick, fill=white] {$f$};
   \node at (.75, 6.5) {$X_2$};
   \node at (-.1,.9) {$A$};
   \node at (1.1,.9) {$A$};
  \end{tikzpicture}
 \end{matrix} =
 \begin{matrix}
  \begin{tikzpicture}[scale = .9, baseline = {(current bounding box.center)}, line width=0.75pt]
   \node (w) at (2,-1.3) {$X_1$};
   \draw(1,2.25) .. controls (1,1.75) and (2,2) .. (2,1.5) -- (2,0);
   \draw[white, double=black, line width = 3pt ] (2,2.25) .. controls (2,1.75) and (1,2) .. (1,1.5) .. controls (1,0.8) and (0,0.8) .. (0,1.5) -- (0, 3.5) .. controls (0,4) .. (0.55,4.28);
   \draw[dashed] (2,0.4) .. controls (1,.4) .. (0.5,1);
   \draw (1.3, 3.3) .. controls (1,3.15) .. (1,3) .. controls (1,2.5) and (2,2.75) .. (2,2.25);
   \draw[white, double=black, line width = 3pt ] (1.7, 3.3) .. controls (2, 3.15) .. (2,3) .. controls (2,2.5) and (1,2.75) .. (1,2.25);
   \draw (1.5, 3.72) .. controls (1.5,4) .. (.95, 4.28);
   \draw (.75,4.72) -- (.75,5.25);
   \draw (2,-1) -- (2,-.5);
   \node (mu1) at (1.5, 3.5) [draw,minimum width=20pt,minimum height=10pt,thick, fill=white] {$\mu_{X_2}$};
   \node (mu2) at (.75, 4.5) [draw,minimum width=20pt,minimum height=10pt,thick, fill=white] {$\mu_{X_2}$};
   \node at (2, -.3) [draw,minimum width=20pt,minimum height=10pt,thick, fill=white] {$f$};
   \node at (.75, 5.5) {$X_2$};
   \node at (-.1,.9) {$A$};
   \node at (1.1,.9) {$A$};
  \end{tikzpicture} 
 \end{matrix} .
 \end{align*}
\end{proof}

We now define the projection functor $\Pi: \cC_A\rightarrow\cC_A^0$ on objects by $\Pi(X)=\pi_X(X)$. We can factorize $\pi_X$ as
\begin{equation*}
 \pi_X: X\xrightarrow{\pi_X'}\Pi(X)\xrightarrow{u_X} X
\end{equation*}
with $\pi_X'$ surjective and $u_X$ the inclusion of the image into $X$. Then Lemma \ref{lem:proj_natural} shows that if $f: X_1\rightarrow X_2$ is a morphism in $\cC_A$, then restricting $f$ to $\Pi(X)$ defines a morphism $\Pi(f): \Pi(X_1)\rightarrow\Pi(X_2)$ such that both squares of the diagram
\begin{equation*}
 \xymatrixcolsep{4pc}
 \xymatrix{
 X_1 \ar[r]^{\pi_{X_1}'} \ar[d]^f & \Pi(X_1) \ar[d]^{\Pi(f)} \ar[r]^{u_{X_1}} & X_1 \ar[d]^{f} \\
 X_2 \ar[r]_{\pi_{X_2}'} & \Pi(X_2) \ar[r]_{u_{X_2}} & X_2\\
 }
\end{equation*}
commute. So $\Pi$ is a functor.

We will also need the induction functor $\cF: \cC\rightarrow\cC_A$ (see for example \cite[Theorem 1.6]{KO} or \cite[Section 2.7]{CKM1}). If $W$ is an object of $\cC$, then $\cF(W)=(A\tens W,\mu_{A\tens W})$ is an $A$-module where
\begin{equation*}
 \mu_{A\tens W}=(\mu_A\tens\Id_W)\circ\cA_{A,A,W}.
\end{equation*}
On morphisms, induction is defined by $\cF(f)=\Id_A\tens f$, and naturality of the right unit isomorphisms implies that $r_A: \cF(\vac)\rightarrow(A,\mu_A)$ is an isomorphism in $\cC_A$.

The projection and induction functors are used in the following lemma, which does not yet require the entire category $\cC$ to be rigid:
\begin{lemma}\label{lem:unit_proj}
 Let $\cC$ be a braided tensor category and $A$ a commutative algebra in $\cC$ that satisfies Assumption \ref{assum:main_assum_1}. If $\cC_A^0$ is semisimple and $[A:\vac]\neq 0$, then $\vac$ is projective in $\cC$.
\end{lemma}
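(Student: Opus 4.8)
The plan is to deduce projectivity of $\vac$ from the semisimplicity of $\cC_A^0$ by transporting splittings along the local induction functor. Write $\cG\colon\cC_A\to\cC$ for the forgetful functor (right adjoint to $\cF$), let $\cF^0=\Pi\circ\cF\colon\cC\to\cC_A^0$ be the local induction functor, and let $\cG^0\colon\cC_A^0\to\cC$ be the restriction of $\cG$. To prove $\vac$ is projective it suffices to show that for every surjection $p\colon W_1\to W_2$ in $\cC$, every morphism $\phi\colon\vac\to W_2$ admits a lift $\tilde\phi\colon\vac\to W_1$ with $p\circ\tilde\phi=\phi$. For each object $W$ of $\cC$ I would introduce the natural morphisms
\begin{equation*}
 \eta_W\colon W\xrightarrow{\,\sim\,}\vac\tens W\xrightarrow{\iota_A\tens\Id_W} A\tens W\xrightarrow{\pi'_{A\tens W}}\cG^0\cF^0(W)
\end{equation*}
and
\begin{equation*}
 \rho_W\colon \cG^0\cF^0(W)\xrightarrow{u_{A\tens W}} A\tens W\xrightarrow{\varepsilon_A\tens\Id_W}\vac\tens W\xrightarrow{\,\sim\,}W,
\end{equation*}
using the factorization $\pi_{A\tens W}=u_{A\tens W}\circ\pi'_{A\tens W}$. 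Naturality of $\eta$ and $\rho$ follows from Lemma \ref{lem:proj_natural} (applied to $\cF(g)=\Id_A\tens g$) together with naturality of the unit isomorphisms and of $\iota_A,\varepsilon_A$.

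The key computation is that $\rho_W\circ\eta_W=\Id_W$. Since $\rho_W\circ\eta_W=(\varepsilon_A\tens\Id_W)\circ\pi_{A\tens W}\circ(\iota_A\tens\Id_W)$ modulo unit isomorphisms, this is a diagrammatic identity of exactly the same flavor as Lemma \ref{lem:piX_id_on_local} and the computation \eqref{diag:f_s=id}: capping the two $A$-legs of $i_A$ with $\iota_A$ at the bottom and $\varepsilon_A$ at the top creates a closed loop $\varepsilon_A\circ\mu_A\circ i_A=[A:\vac]\,\varepsilon_A\circ\iota_A=[A:\vac]\Id_\vac$ by Assumption \ref{assum:main_assum_1}(3) and (1), which cancels the normalizing factor $\tfrac{1}{[A:\vac]}$ in the definition of $\pi_{A\tens W}$ and leaves the $W$-strand untouched. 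This is the one step that genuinely requires $[A:\vac]\neq0$: even if one only obtained $\rho_W\circ\eta_W=c\,\Id_W$ for some $c\in\mathbb F$, the argument would go through once $c\neq0$ is checked, and here $c=1$, as one verifies directly on $W=\vac$, where $\cF(\vac)\cong A$ is already local, so $\pi_{\cF(\vac)}=\Id$ by Lemma \ref{lem:piX_id_on_local} and $\rho_\vac\circ\eta_\vac=\varepsilon_A\circ\iota_A=\Id_\vac$.

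With these in hand the lift is assembled formally. Because $A$ is rigid, the functor $A\tens\bullet$ is right exact, so $\cF(p)=\Id_A\tens p$ is surjective in $\cC_A$; and $\Pi$ preserves surjections (it is left adjoint to the inclusion $\cC_A^0\hookrightarrow\cC_A$, or directly from the naturality square of Lemma \ref{lem:proj_natural} together with surjectivity of each $\pi'_X$). Hence $\cF^0(p)$ is a surjection in $\cC_A^0$, which splits by semisimplicity: there is $t\colon\cF^0(W_2)\to\cF^0(W_1)$ with $\cF^0(p)\circ t=\Id_{\cF^0(W_2)}$. Setting $\tilde\phi=\rho_{W_1}\circ\cG^0(t)\circ\eta_{W_2}\circ\phi$ and using naturality of $\rho$, namely $p\circ\rho_{W_1}=\rho_{W_2}\circ\cG^0\cF^0(p)$, I compute
\begin{equation*}
 p\circ\tilde\phi=\rho_{W_2}\circ\cG^0\big(\cF^0(p)\circ t\big)\circ\eta_{W_2}\circ\phi=\rho_{W_2}\circ\eta_{W_2}\circ\phi=\phi,
\end{equation*}
as desired. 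The main obstacle is the diagrammatic verification that $\rho_W\circ\eta_W=\Id_W$ for general, possibly non-local, $\cF(W)$; everything else is formal, relying only on right-exactness of $A\tens\bullet$, the properties of $\Pi$ recorded above, and the semisimplicity of $\cC_A^0$.
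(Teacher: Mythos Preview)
Your overall strategy matches the paper's: use local induction $\Pi\circ\cF$ to transport the lifting problem into $\cC_A^0$, solve it there by semisimplicity, and transport back via $\iota_A$ and $\varepsilon_A$. However, your key claim that $\rho_W\circ\eta_W=\Id_W$ for all $W$ is false. Unwinding the definitions (as you do), this composite is $\tfrac{1}{[A:\vac]}$ times the partial trace over $A$ of the monodromy $\cR_{W,A}\circ\cR_{A,W}$: the projection $\pi_{A\tens W}$ contains a double braiding of an $A$-strand with all of $X=A\tens W$, hence with $W$, and that braiding does not disappear after capping the $A$-slot with $\iota_A$ and $\varepsilon_A$; only the $A$-part of $X$ becomes trivial. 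Your sketch (``creates a closed loop \ldots and leaves the $W$-strand untouched'') overlooks this. Concretely, in the toric-code modular category with simples $\vac,e,m,f$ and $A=\vac\oplus e$, one gets $\rho_m\circ\eta_m=\tfrac{1}{2}(1+(-1))=0$, consistent with $\Pi(\cF(m))=0$. Your parenthetical check on $W=\vac$ only verifies that case; it does not show the putative scalar $c$ is independent of $W$.

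The repair is exactly the point the paper exploits: you do not need $\rho_{W_2}\circ\eta_{W_2}=\Id_{W_2}$, only $\rho_{W_2}\circ\eta_{W_2}\circ\phi=\phi$. Since $\phi\colon\vac\to W_2$, naturality of $\eta$ and $\rho$ gives
\[
\rho_{W_2}\circ\eta_{W_2}\circ\phi \;=\; \phi\circ\rho_\vac\circ\eta_\vac \;=\; \phi,
\]
using only the $W=\vac$ case, which you did verify correctly. The paper phrases this as: $\cF(\vac)\cong A$ is local, so $\pi_{\cF(\vac)}=\Id$ by Lemma~\ref{lem:piX_id_on_local}, and then Lemma~\ref{lem:proj_natural} pushes this along $\cF(\phi)=\Id_A\tens\phi$ to obtain $\pi_{A\tens W_2}\circ(\Id_A\tens\phi)=\Id_A\tens\phi$. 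With this correction your argument goes through and is essentially the paper's proof repackaged in the language of the unit/counit maps $\eta,\rho$.
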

\begin{proof}
 We need to show that if $f: W_1\rightarrow W_2$ is a surjection in $\cC$ and $g: \vac\rightarrow W_2$ is a morphism, then there is some $\sigma: \vac\rightarrow W_1$ such that $f\circ\sigma=g$.
 
 We start by applying the functor $\Pi\circ\cF: \cC\rightarrow\cC_A^0$ to $f$. We claim that $(\Pi\circ\cF)(f)$, that is, $\Pi(\Id_A\tens f)$,
 is a surjection in $\cC_A$, and thus also in $\cC_A^0$. In fact, $\Id_A\tens f$ is surjective in $\cC_A$ because the rigidity of $A$ from Assumption \ref{assum:main_assum_1}(2) means that $\cF$ is an exact functor. Then since $\pi_{A\tens W_2}'$ is also surjective, the commutative diagram
 \begin{equation*}
  \xymatrixcolsep{4pc}
  \xymatrix{
  A\tens W_1 \ar[d]_{\pi_{A\tens W_1}'} \ar[r]^{\Id_A\tens f} & A\tens W_2 \ar[d]^{\pi_{A\tens W_2}'}  \\
  \Pi(A\tens W_1) \ar[r]_{\Pi(\Id_A\tens f)} & \Pi(A\tens W_2)   \\
  }
 \end{equation*}
 implies that $\Pi(\Id_A\tens f)\circ\pi_{A\tens W_1}'$ is surjective. But then $\Pi(\Id_A\tens f)$ is surjective as well.

 Next, $\cF(\vac)\cong(A,\mu_A)$ is an object of $\cC_A^0$ by the commutativity of $\mu_A$. Since $\cC^0_A$ is semisimple, $A\tens\vac$ is projective in $\cC_A^0$ and we have a morphism
 \begin{equation*}
  s': A\tens\vac\rightarrow\Pi(A\tens W_1)
 \end{equation*}
in $\cC_A^0$ such that the diagram
\begin{equation*}
 \xymatrixcolsep{4pc}
 \xymatrix{
 A\tens \vac \ar[r]^{\Id_A\tens g} \ar[d]_{s'} & A\tens W_2 \ar[d]^{\pi_{A\tens W_2}'} \\
 \Pi(A\tens W_1) \ar[r]_{\Pi(\Id_A\tens f)} & \Pi(A\tens W_2) \\
 }
\end{equation*}
commutes. If we define $s=u_{A\tens W_1}\circ s'$, then the definitions say
\begin{align*}
 (\Id_A\tens f)\circ s & = u_{A\tens W_2}\circ\Pi(\Id_A\tens f)\circ s'\nonumber\\
  & =u_{A\tens W_2}\circ \pi_{A\tens W_2}'\circ(\Id_A\tens g)\nonumber\\
  & =\pi_{A\tens W_2}\circ(\Id_A\tens g).
\end{align*}
But because $A\tens\vac$ is an object of $\cC_A^0$, Lemmas \ref{lem:piX_id_on_local} and \ref{lem:proj_natural} imply that
\begin{equation*}
 \pi_{A\tens W_2}\circ(\Id_A\tens g)=(\Id_A\tens g)\circ\pi_{A\tens\vac}=\Id_A\tens g,
\end{equation*}
so actually $(\Id_A\tens f)\circ s=\Id_A\tens g$.

Finally, we can define $\sigma: \vac\rightarrow W_1$ in $\cC$ to be the composition
\begin{equation*}
 \vac\xrightarrow{l_\vac^{-1}} \vac\tens \vac\xrightarrow{\iota_A\tens\Id_\vac} A\tens\vac\xrightarrow{s} A\tens W_1\xrightarrow{\varepsilon_A\tens\Id_{W_1}} \vac\tens W_1\xrightarrow{l_{W_1}} W_1.
\end{equation*}
We calculate $f\circ\sigma$ using the commutative diagram
\begin{equation*}
\xymatrixcolsep{3pc}
 \xymatrix{
 \vac \ar[r]^{l_\vac^{-1}} \ar[rd]_{g} & \vac\tens \vac \ar[r]^{\iota_A\tens\Id_\vac} \ar[rd]_{\Id_\vac\tens g} & A\tens\vac \ar[r]^{s} \ar[rd]_{\Id_A\tens g} & A\tens W_1 \ar[r]^{\varepsilon_A\tens\Id_{W_1}} \ar[d]^{\Id_A\tens f} & \vac\tens W_1 \ar[r]^{l_{W_1}} \ar[d]^{\Id_\vac\tens f} & W_1 \ar[d]^{f} \\
 & W_2 \ar[r]_{l_{W_2}^{-1}} & \vac\tens W_2 \ar[r]_{\iota_A\tens\Id_{W_2}} & A\tens W_2 \ar[r]_{\varepsilon_A\tens\Id_{W_2}} & \vac\tens W_2 \ar[r]_{l_{W_2}} & W_2 \\
 }
\end{equation*}
Because  $\varepsilon_A\circ\iota_A=\Id_\vac$, we get $f\circ\sigma=g$, completing the proof that $\vac$ is projective in $\cC$.
\end{proof}

We will also use the following lemma which is essentially \cite[Corollary 4.2.13]{EGNO}, where, however, the assumptions are a bit stronger; see also the proof of \cite[Theorem 5.24]{CM}:
\begin{lemma}\label{lem:ss_from_1_proj}
 Suppose $\cC$ is a tensor category with projective unit object $\vac$. If $W_2$ is a rigid object of $\cC$ with rigid dual $W_2^*$, then any surjection $f: W_1\rightarrow W_2$ splits. 
\end{lemma}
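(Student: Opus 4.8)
The plan is to produce the section $\sigma:W_2\to W_1$ by lifting the coevaluation of $W_2$ through $f$ and then transporting that lift back across the rigidity isomorphism $\hom_\cC(\vac,W_1\tens W_2^*)\cong\hom_\cC(W_2,W_1)$. Write $\mathrm{ev}_{W_2}:W_2^*\tens W_2\to\vac$ and $\mathrm{coev}_{W_2}:\vac\to W_2\tens W_2^*$ for the evaluation and coevaluation exhibiting $W_2^*$ as dual to $W_2$. First I would tensor $f$ on the right by $\Id_{W_2^*}$ to obtain $f\tens\Id_{W_2^*}:W_1\tens W_2^*\to W_2\tens W_2^*$. Since $W_2^*$ is itself rigid, the functor $\bullet\tens W_2^*$ has a right adjoint (tensoring with a dual of $W_2^*$) and is therefore right exact, so it preserves surjections; hence $f\tens\Id_{W_2^*}$ is again a surjection in $\cC$.

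Next, because $\vac$ is projective and $f\tens\Id_{W_2^*}$ is surjective, I can lift $\mathrm{coev}_{W_2}$ to a morphism $\widetilde{g}:\vac\to W_1\tens W_2^*$ with $(f\tens\Id_{W_2^*})\circ\widetilde{g}=\mathrm{coev}_{W_2}$. I then define the candidate section $\sigma:W_2\to W_1$ to be the image of $\widetilde{g}$ under the rigidity isomorphism above, namely the composition
\begin{equation*}
 W_2\xrightarrow{l_{W_2}^{-1}}\vac\tens W_2\xrightarrow{\widetilde{g}\tens\Id_{W_2}}(W_1\tens W_2^*)\tens W_2\xrightarrow{\cA_{W_1,W_2^*,W_2}^{-1}} W_1\tens(W_2^*\tens W_2)\xrightarrow{\Id_{W_1}\tens\mathrm{ev}_{W_2}} W_1\tens\vac\xrightarrow{r_{W_1}} W_1.
\end{equation*}

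Finally, I would verify $f\circ\sigma=\Id_{W_2}$. Using naturality of the right unit and associativity isomorphisms together with functoriality of $\tens$, I can push $f$ through the composition until it acts directly on $\widetilde{g}$; the resulting factor $(f\tens\Id_{W_2^*})\circ\widetilde{g}$ equals $\mathrm{coev}_{W_2}$ by construction. What remains is the composition
\begin{equation*}
 W_2\xrightarrow{l_{W_2}^{-1}}\vac\tens W_2\xrightarrow{\mathrm{coev}_{W_2}\tens\Id_{W_2}}(W_2\tens W_2^*)\tens W_2\xrightarrow{\cA_{W_2,W_2^*,W_2}^{-1}} W_2\tens(W_2^*\tens W_2)\xrightarrow{\Id_{W_2}\tens\mathrm{ev}_{W_2}} W_2\tens\vac\xrightarrow{r_{W_2}} W_2,
\end{equation*}
which is exactly the rigidity (zig-zag) identity for $W_2$ and hence equals $\Id_{W_2}$. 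Thus $\sigma$ splits $f$. I expect the only genuine obstacle to be the first step, namely confirming that tensoring the surjection $f$ with the rigid object $W_2^*$ again yields a surjection in the categorical (right-cancellable) sense; this is guaranteed by the right exactness coming from rigidity of $W_2^*$, after which the argument is a formal lift-and-currying computation with the triangle identity doing the rest. Note in particular that, unlike the results of the previous sections, no non-vanishing index hypothesis enters here.
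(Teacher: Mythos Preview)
Your proof is correct and is essentially identical to the paper's own argument: the paper likewise tensors $f$ by $\Id_{W_2^*}$, uses rigidity of $W_2^*$ to see that this preserves surjections, lifts the coevaluation through the projective unit to get a map $\widetilde{\sigma}:\vac\to W_1\tens W_2^*$, and then defines $\sigma$ by exactly the same composition you wrote down, verifying $f\circ\sigma=\Id_{W_2}$ via naturality and the zig-zag identity. Your closing observation that no non-vanishing index hypothesis is needed here is also consistent with the paper's presentation.
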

\begin{proof}
 Given a surjection $f: W_1\rightarrow W_2$ in $\cC$, we wish to show that there is a section $\sigma: W_2\rightarrow W_1$ such that $f\circ\sigma=\Id_{W_2}$.
 
 Because $W_2^*$ is rigid, the tensoring functor $\bullet\tens W_2^*$ is exact and thus preserves surjections. So $f\tens\Id_{W_2^*}$ is still a surjection, and then because $\vac$ is projective, there is a morphism
 \begin{equation*}
  \widetilde{\sigma}: \vac\rightarrow W_1\tens W_2^*
 \end{equation*}
such that the diagram
\begin{equation*}
 \xymatrix{
  & W_1\tens W_2^* \ar[d]^{f\tens\Id_{W_2^*}} \\
  \vac \ar[ru]^(.4){\widetilde{\sigma}} \ar[r]_(.4){i_{W_2}} & W_2\tens W_2^* \\
 }
\end{equation*}
commutes. We set $\sigma$ equal to the composition
\begin{align*}
 W_2\xrightarrow{l_{W_2}^{-1}} \vac\tens W_2\xrightarrow{\widetilde{\sigma}\tens\Id_{W_2}} &(W_1\tens W_2^*)\tens W_2\nonumber\\
 &\xrightarrow{\cA_{W_1,W_2^*,W_2}^{-1}} W_1\tens(W_2^*\tens W_2)\xrightarrow{\Id_{W_1}\tens e_{W_2}} W_1\tens\vac\xrightarrow{r_{W_1}} W_1
\end{align*}
and calculate
\begin{align*}
 f\circ\sigma & = f\circ r_{W_1}\circ(\Id_{W_1}\tens e_{W_2})\circ\cA_{W_1,W_2^*,W_2}^{-1}\circ(\widetilde{\sigma}\tens\Id_{W_2})\circ l_{W_2}^{-1}\nonumber\\
 & = r_{W_2}\circ(f\tens\Id_\vac)\circ (\Id_{W_1}\tens e_{W_2})\circ\cA_{W_1,W_2^*,W_2}^{-1}\circ(\widetilde{\sigma}\tens\Id_{W_2})\circ l_{W_2}^{-1}\nonumber\\
 & =r_{W_2}\circ(\Id_{W_2}\tens e_{W_2})\circ\cA_{W_2,W_2^*,W_2}^{-1}\circ((f\tens\Id_{W_2^*})\tens\Id_{W_2})\circ(\widetilde{\sigma}\tens\Id_{W_2})\circ l_{W_2}^{-1}\nonumber\\
 & =r_{W_2}\circ(\Id_{W_2}\tens e_{W_2})\circ\cA_{W_2,W_2^*,W_2}^{-1}\circ(i_{W_2}\tens\Id_{W_2})\circ l_{W_2}^{-1}\nonumber\\
 & =\Id_{W_2}
\end{align*}
using naturality of the associativity and unit isomorphisms in $\cC$ and the rigidity of $W_2$.
\end{proof}

Now to finish the proof of Theorem \ref{thm:main_cat_thm_2}, we just combine Lemmas \ref{lem:unit_proj} and \ref{lem:ss_from_1_proj}:
\begin{proof}
In the setting of the theorem, the unit object $\vac$ of $\cC$ is projective by Lemma \ref{lem:unit_proj}. Then because every object of $\cC$ is rigid by assumption, Lemma \ref{lem:ss_from_1_proj} implies that every surjection in $\cC$ splits. Thus $\cC$ is semisimple.
\end{proof}

\section{Applications to vertex operator algebras}\label{sec:VOA_app}

We use the definition of vertex operator algebra from \cite{FLM, LL} and the definition of grading-restricted generalized module for a vertex operator algebra from, for example, \cite{Hu}, although we sometimes will simply use the term \textit{module} to refer to grading-restricted generalized modules. The theory of vertex and braided tensor categories of (grading-restricted generalized) modules for a vertex operator algebra is developed in \cite{HLZ1}-\cite{HLZ8}; see especially \cite{HLZ8} for a description of the braided tensor category structure, and see also the expositions \cite{HL} and \cite[Section 3.3]{CKM1}. However, we will not need the details of the vertex tensor category construction here, except to note that the unit object of such a category is the vertex operator algebra $V$ itself and that tensor products are determined by (logarithmic) intertwining operators among $V$-modules. Instead, we will need the dictionary due to Huang, Kirillov, and Lepowsky relating conformal inclusions of vertex operator algebras to commutative algebras in braided tensor categories:
\begin{theo}\label{thm:HKL_main_theo}\cite[Theorems 3.2 and 3.4]{HKL}
 Let $V\subseteq A$ be a conformal inclusion of vertex operator algebras and $\cC$ a category of grading-restricted generalized $V$-modules that includes $A$ and admits the braided tensor category structure of \cite{HLZ8}. Then:
 \begin{enumerate}
  \item The vertex operator algebra $A$ is a commutative algebra in $\cC$.
  \item The category $\cC^0_A$ of local $A$-modules is the full subcategory of grading-restricted generalized $A$-modules which are objects of $\cC$ when viewed as $V$-modules.
 \end{enumerate}
\end{theo}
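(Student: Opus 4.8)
The plan is to use the universal property of the Huang--Lepowsky--Zhang tensor product to translate between $V$-module morphisms out of tensor products and intertwining operators, and then to match the vertex-algebraic axioms for $A$ against the categorical algebra axioms. Recall that in the HLZ theory, for $V$-modules $W_1,W_2,W_3$ the space $\hom_\cC(W_1\tens W_2,W_3)$ is naturally identified with the space of (logarithmic) intertwining operators of type $\binom{W_3}{W_1\,W_2}$, by composing the canonical intertwining operator $\cY_{W_1,W_2}\colon W_1\otimes W_2\to\overline{W_1\tens W_2}$ with the given morphism. I would take this correspondence as the basic dictionary throughout.

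For part (1): first I would observe that the vertex operator $Y_A$ on $A$, viewed as a map $A\otimes A\to A((x))$, restricts to an intertwining operator of type $\binom{A}{A\,A}$ for $V$, since the $V$-action on $A$ makes $A$ a $V$-module and $Y_A$ satisfies the Jacobi identity with $Y_A\vert_V$. By the dictionary this intertwining operator yields a $V$-module morphism $\mu_A\colon A\tens A\to A$, and the inclusion $V\hookrightarrow A$ gives $\iota_A\colon\vac\to A$. Then I would verify the three algebra axioms by translating them back into statements about $Y_A$: the unit axioms correspond to the vacuum property $Y_A(\vac,x)=\Id_A$ and the creation property $Y_A(a,x)\vac\in A[[x]]$ with constant term $a$; associativity of $\mu_A$ corresponds to the associativity (operator product expansion) of $Y_A$, which holds because $Y_A$ is a genuine vertex operator; and commutativity $\mu_A\circ\cR_{A,A}=\mu_A$ corresponds to the skew-symmetry $Y_A(a,x)b=e^{xL(-1)}Y_A(b,-x)a$ together with the definition of the braiding $\cR$ via analytic continuation. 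The crucial point is that $A$ is $\ZZ$-graded by conformal weight, so the monodromy arising from moving one copy of $A$ around the other is trivial, forcing the untwisted commutativity relation rather than a twisted one.

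For part (2): given a grading-restricted generalized $A$-module $(M,Y_M)$ whose restriction to $V$ lies in $\cC$, I would restrict $Y_M$ to a map $A\otimes M\to M((x))$ to obtain an intertwining operator of type $\binom{M}{A\,M}$, hence a $V$-module morphism $\mu_M\colon A\tens M\to M$; the $A$-module axioms for $Y_M$ translate into the unit and associativity axioms making $(M,\mu_M)$ an object of $\cC_A$, and conversely every object of $\cC_A$ arises this way. The heart of the statement is to identify which such $M$ are \emph{local}, i.e.\ satisfy $\mu_M\circ\cR_{M,A}\circ\cR_{A,M}=\mu_M$. I would show that this double-braiding condition is exactly monodromy-freeness of $Y_M$: the composite $\cR_{M,A}\circ\cR_{A,M}$ implements, at the level of intertwining operators, the full monodromy of analytically continuing $Y_M(a,x)$ once around $x=0$, so the condition is equivalent to $Y_M(a,x)$ being single-valued with integer powers of $x$. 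Since this integrality is precisely what distinguishes ordinary (untwisted, grading-restricted generalized) $A$-modules from more general objects, local $A$-modules coincide with genuine $A$-modules lying in $\cC$.

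The main obstacle, and where the real work lies, is the commutativity/locality identification in both parts: one must unwind the HLZ definition of the braiding $\cR$ (built from the $e^{L(-1)}$-shift and the choice of branch in the analytic continuation of intertwining operators) and show precisely that the double braiding computes the monodromy of $Y_M$ around $0$. This requires the convergence and analytic-continuation results underlying the braided tensor structure of \cite{HLZ8}, together with careful bookkeeping of the integer grading, and it is exactly the step where the $\ZZ$-grading of $A$ (equivalently, triviality of the twist on $A$) is essential. By comparison, the unit and associativity translations are comparatively routine consequences of the corresponding intertwining-operator identities.
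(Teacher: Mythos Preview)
Your sketch is essentially correct and follows the approach of \cite{HKL}, but note that the paper does not prove this statement at all: Theorem~\ref{thm:HKL_main_theo} is simply quoted from \cite[Theorems 3.2 and 3.4]{HKL}, and the paper only provides the brief paragraph following the theorem statement explaining how the algebra and module structures arise from $Y_A$ and $Y_X$. So there is nothing to compare your proposal against beyond that informal description, with which your outline agrees.
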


The commutative algebra structure on the vertex operator algebra $A$ arises as follows: The unit $\iota_A: V\rightarrow A$ is the inclusion, and the multiplication $\mu_A: A\tens A\rightarrow A$ is the unique map induced by the vertex operator $Y_A: A\otimes A\rightarrow A((x))$, which is an intertwining operator among $V$-modules when $A$ is viewed as a $V$-module. Similarly, if $X$ is an $A$-module in the category $\cC$, the vertex operator $Y_X: A\otimes X\rightarrow X((x))$ induces the multiplication action $\mu_X: A\tens X\rightarrow X$. The module $X$ is in particular a local module for the commutative algebra $A$ because the vertex operator $Y_X$ involves only integral powers of the formal variable $x$.

Theorem \ref{thm:HKL_main_theo} means that we can use Theorems \ref{thm:main_cat_thm_1} and \ref{thm:main_cat_thm_2} to prove Theorem \ref{thm:main_voa_thm_intro} from the Introduction:
\begin{proof}
 The assumptions of Theorem \ref{thm:main_voa_thm_intro} are simply Assumption \ref{assum:main_assum_1}, so by Theorem \ref{thm:main_cat_thm_1}, the category $\cC_A$ of not-necessarily-local $A$-modules in $\cC$ is semisimple if $\cC$ is. Then the subcategory $\cC_A^0$ is also semisimple, provided that surjections in $\cC_A^0$ are still surjective in $\cC_A$. Thus suppose $f: X_1\rightarrow X_2$ is a surjection in $\cC_A^0$ and $g: X_2\rightarrow X_3$ is a morphism in $\cC_A$ such that $g\circ f=0$; we need to show $g=0$. One way is to factorize $g$ as 
 $$X_2\xrightarrow{g'}\im g\hookrightarrow X_3$$
 and note that $g'$ is a morphism in $\cC_A^0$ because $\cC_A^0$ is abelian (see for example \cite[Remark 2.57]{CKM1}). Thus $g'$ and hence $g$ also is zero. For an alternative argument, see \cite[Theorem 3.2(2)]{KO}. This proves the first assertion of Theorem \ref{thm:main_voa_thm_intro}.
 
 For the second assertion, the assumptions are identical to those of Theorem \ref{thm:main_cat_thm_2}, so we can immediately conclude $\cC$ is semisimple if $\cC$ is rigid and $\cC_A^0$ is semisimple.
\end{proof}

Before proving Theorem \ref{thm:voa_rat_thm_intro}, we recall in more detail what it means for a vertex operator algebra $V$ to be \textit{strongly rational}. This means:
\begin{itemize}
\item $V$ is positive energy, or CFT-type, that is, $V$ is $\NN$-graded by conformal weights, $V=\bigoplus_{n=0}^\infty V_{(n)}$, with $V_{(0)}$ equal to the linear span of the vacuum vector.
 \item $V$ is simple.
 \item $V$ is self-contragredient, that is, the contragredient module $V'=\bigoplus_{n=0}^\infty V_{(n)}^*$ is isomorphic to $V$ as a $V$-module. Equivalently, there is a nondegenerate invariant bilinear form $(\cdot,\cdot): V\times V\rightarrow\CC$.
 \item $V$ is $C_2$-cofinite, that is, $\dim V/C_2(V)<\infty$ where $C_2(V)=\mathrm{span}\lbrace u_{-2} v\,\vert\,u,v\in V\rbrace$.
 \item $V$ is rational, that is, every $\NN$-gradable weak $V$-module $W=\bigoplus_{n=0}^\infty W(n)$, where the $W(n)$ could be infinite dimensional, is the direct sum of irreducible (grading-restricted) $V$-modules.
\end{itemize}
The full category of grading-restricted modules for a strongly rational vertex operator algebra $V$ is a semisimple modular tensor category \cite{Hu-rig_mod}, while positive energy and $C_2$-cofiniteness are sufficient for braided tensor category structure on the full category of grading-restricted generalized modules \cite{Hu}.

Now we can prove Theorem \ref{thm:voa_rat_thm_intro} from the Introduction:
\begin{proof}
For the first assertion, we first assume $A$ is a simple, positive-energy vertex operator algebra in the modular tensor category $\cC$ of $V$-modules, and that $\dim_\cC A\neq 0$. Since $\cC$ is semisimple and $A$ has positive energy, $A=V\oplus\widetilde{A}$ as a $V$-module, where $\widetilde{A}\subseteq\bigoplus_{n\geq 1} A_{(n)}$. Then \cite[Corollary 3.2]{Li} implies $A$ is self-contragredient because $A$ is simple and
\begin{equation*}
 L(1)A_{(1)}=L(1)V_{(1)}\oplus L(1)\widetilde{A}_{(1)} =L(1)V_{(1)}=0,
\end{equation*}
where $L(1)V_{(1)}=0$ because $V$ is positive energy and self-contragredient. Also, $A$ is $C_2$-cofinite since by \cite[Proposition 5.2]{ABD}, $A$ is $C_2$-cofinite even as a $V$-module.

It remains to show that $A$ is rational. Since $A$ is positive energy and $C_2$-cofinite, it is enough to show that the category of grading-restricted generalized $A$-modules is semisimple (see Lemma 3.6 and Proposition 3.7 of \cite{CM} or \cite[Proposition 4.16]{McR1}). Since every grading-restricted generalized $A$-module is also a $V$-module, we need to show that $\cC_A^0$ is semisimple. So we just need to verify the assumptions needed for the first assertion of Theorem \ref{thm:main_cat_thm_1}. The $V$-homomorphism $\varepsilon_A$ exists because $A$ is a semisimple $V$-module, and because $A$ is simple, \cite[Lemma 1.20]{KO} shows that $A$ is self-dual with evaluation $\varepsilon_A\circ\mu_A$.

We also need to show that the non-zero categorical dimension $\dim_\cC A$ agrees with the index $[A:V]$. By definition,
 \begin{equation*}
  \dim_\cC A = \varepsilon_A\circ\mu_A\circ\cR_{A,A}\circ(\theta_A\tens\Id_A)\circ i_A,
 \end{equation*}
where $\cR_{A,A}$ is the braiding and $\theta_A$ is the twist $e^{2\pi i L(0)}$ (see \cite{Hu-rig_mod}). But $\theta_A=\Id_A$ because $A$ is $\ZZ$-graded by conformal weights, and $\mu_A\circ\cR_{A,A}=\mu_A$ because $A$ is a commutative algebra in $\cC$. Thus
\begin{equation*}
 \dim_\cC A = \varepsilon_A\circ\mu_A\circ i_A.
\end{equation*}
Now, $\mu_A\circ i_A$ is a $V$-module homomorphism from $V$ to $A$, which means it is determined by the image of the vacuum. Because $A$ is positive energy, such a homomorphism must be a multiple of the inclusion: $\mu_A\circ i_A =[A:V]\iota_A$ for some $[A:V]\in\CC$. Composing both sides of this equation on the left by $\varepsilon_A$ then shows that $[A:V]=\dim_\cC A\neq 0$. This completes the proof that $A$ is strongly rational.

Conversely, if $A$ is strongly rational, then $A$ is simple and positive energy by definition, and Theorems 6.10 and 6.3 of \cite{KZ} imply that the index $[A:V]$ (equivalently, $\dim_\cC A$) is non-zero. So the conditions of the first assertion of the theorem are necessary.

For the second assertion of the theorem, we first prove that $V$ is strongly rational assuming $A$ is strongly rational, $\varepsilon_A$ exists, $V$ is $C_2$-cofinite, the tensor category $\cC$ of grading-restricted generalized $V$-modules is rigid, and $\dim_\cC A\neq 0$. First we show that $V$ is simple: suppose $I\subseteq V$ is a non-zero ideal, equivalently a non-zero (left) $V$-submodule. Since $A=V\oplus\ker\varepsilon_A$ as a $V$-module, \cite[Proposition 4.5.6]{LL} implies that the $A$-submodule (equivalently, ideal) of $A$ generated by any $\tilde{v}\in I$ is
\begin{equation*}\label{eqn:v_tilde_ideal}
 \langle\tilde{v}\rangle =\text{span}\lbrace a_n\tilde{v}\,\vert\,a\in V\cup\ker\varepsilon_A, n\in\ZZ\rbrace.
\end{equation*}
If $a\in V$, then $a_n\tilde{v}\in I$, and if $a\in\ker\varepsilon_A$, then
\begin{equation*}
 Y_A(a,x)\tilde{v}= e^{xL(-1)}Y_A(\tilde{v},-x)a\in(\ker\varepsilon_A)((x))
\end{equation*}
by skew-symmetry for the vertex operator $Y_A$. Thus
\begin{equation*}
 \langle\tilde{v}\rangle\subseteq I\oplus\ker\varepsilon_A
\end{equation*}
for all $\tilde{v}\in I$; but if $\tilde{v}\neq 0$, then $\langle\tilde{v}\rangle=A$ since $A$ is simple. So in fact $I=V$ and $V$ is simple.

Positive energy for $V$ is immediate since $A$ has positive energy. To show that $V$ is self-contragredient, note that a nondegenerate invariant bilinear form $(\cdot,\cdot): A\times A\rightarrow\CC$ restricts to an invariant bilinear form on $V$ which is non-zero because it is nondegenerate on $V_{(0)}=A_{(0)}=\CC\vac$. But a non-zero invariant bilinear form on $V$ is nondegenerate because $V$ is simple, so $V$ is self-contragredient.
 
 It remains to show that $V$ is rational. As before, we need to show that the category $\cC$ of grading-restricted generalized $V$-modules is semisimple. Since we are assuming $\cC$ is a rigid braided tensor category, we just need to verify the assumptions for the second assertion of Theorem \ref{thm:main_voa_thm_intro}. In fact, $\varepsilon_A$ exists by assumption, and we can show that $A$ is self-dual with evaluation $\varepsilon_A\circ\mu_A$ and that $[A:V]\neq 0$ exactly as in the proof of the first assertion of the theorem. So $V$ is strongly rational.
 
 Conversely, if $V$ is strongly rational, then $\varepsilon_A$ exists because $A$ is a semisimple $V$-module, $V$ is $C_2$-cofinite by assumption, and the braided tensor category $\cC$ of $V$-modules is rigid \cite{Hu-rig_mod}. Also, $\dim_\cC A\neq 0$ by Theorems 6.10 and 6.3 of \cite{KZ} as before, so all the conditions in the second assertion of Theorem \ref{thm:voa_rat_thm_intro} are necessary.
\end{proof}

We apply Theorem \ref{thm:voa_rat_thm_intro} to the coset rationality problem in \cite{McR3}. For now, we discuss what the theorem says about the orbifold rationality problem:
\begin{exam}\label{exam:orbifold}
 Let $A$ be a strongly rational vertex operator algebra and $G$ a finite automorphism group, so that the fixed-point subalgebra $V=A^G$ is a conformal vertex operator subalgebra. By the main theorem of \cite{DLM}, $A$ is a semisimple $V$-module, so there is a $V$-module homomorphism $\varepsilon_A: A\rightarrow V$ such that $\varepsilon_A\circ\iota_A=\Id_V$. Moreover, if $\cC$ is a braided tensor category of $V$-modules that includes $A$, then $A$ is a rigid object in $\cC$ with $\dim_\cC A=\vert G\vert\neq 0$ by \cite[Proposition 4.15]{McR1}. Thus if we could show in addition:
 \begin{itemize}
  \item $V$ is $C_2$-cofinite, and
  \item The braided tensor category of grading-restricted generalized $V$-modules is rigid,
 \end{itemize}
then we could use Theorem \ref{thm:voa_rat_thm_intro}(2) to conclude that $V=A^G$ is strongly rational. But actually, we can do better than this: \cite[Corollary 4.23]{McR} shows that $C_2$-cofiniteness alone is enough to guarantee $A^G$ is strongly rational. In effect, the strong rationality of $A^G$ proved in \cite{CM} for $G$ cyclic, and in particular the rigidity of the category of $A^G$-modules in this special case, can be combined with \cite[Main Theorem 1]{McR} to yield the stronger result.

If conversely we assume that $V=A^G$ is strongly rational, and that $A$ is simple and positive energy, then we can use Theorem \ref{thm:voa_rat_thm_intro}(1) to conclude that $A$ is strongly rational, because $\dim_\cC A=\vert G\vert$ by \cite[Proposition 4.15]{McR1}. This was already observed in \cite[Theorem 4.14]{McR1}; in fact, the proof of Theorem \ref{thm:voa_rat_thm_intro}(1) here is just the straightforward adaptation of the proof  for the special case $A^G\subseteq A$ in \cite{McR1}.
\end{exam}

We conclude this section with a discussion of whether some of the conditions in Theorem \ref{thm:voa_rat_thm_intro} are vacuous or redundant. In Theorem \ref{thm:voa_rat_thm_intro}(1), the condition that $A$ be simple and positive energy is not vacuous because there exist vertex operator algebras which are neither simple nor positive energy but have non-zero dimension in the modular tensor category of a strongly rational subalgebra. Indeed, \cite[Proposition 2.10]{Li} shows that if $V$ is any vertex operator algebra (whether strongly rational or not) and $W$ is a $\ZZ$-graded $V$-module, then $V\oplus W$ has a vertex operator algebra structure given by
\begin{equation*}
 Y_{V\oplus W}((v_1,w_1),x)(v_2,w_2) =\left(Y_V(v_1,x)v_2, Y_W(v_1,x)w_2+e^{xL(-1)}Y_W(v_2,-x)w_1\right).
\end{equation*}
Taking $W=V$, we thus get a vertex operator algebra extension $A=V\oplus V$ for which the second copy of $V$ is a non-zero proper ideal; if $V$ is strongly rational, the dimension of $A$ as a $V$-module will of course be $2$.

As indicated in the Introduction, a more interesting question is whether the condition that $\dim_\cC A$ be non-zero is vacuous. More generally, does there exist a commutative algebra $A$ with trivial twist in a $\CC$-linear semisimple modular tensor category $\cC$ such that $\dim_\cC A =0$? It seems likely that such algebras exist, but note that they will never occur in unitary modular tensor categories, where dimensions of non-zero objects are strictly positive. Note also the relevance of the $\CC$-linearity assumption: over fields of positive characteristic, the group algebra of a finite abelian group can have dimension zero in the modular tensor category of finite-dimensional vector spaces.

Regarding the conditions of Theorem \ref{thm:voa_rat_thm_intro}(2), it is widely believed (see for example \cite{Hu-C2-conj}) that the tensor category of modules for a simple positive-energy self-contragredient $C_2$-cofinite vertex operator algebra should be rigid. So conjecturally the third condition in Theorem \ref{thm:voa_rat_thm_intro}(2) is redundant, but so far this is not proved. As for the remaining conditions, the following example shows that a rational vertex operator algebra $A$ can have a non-rational $C_2$-cofinite subalgebra with rigid module category $\cC$ when $\varepsilon_A$ fails to exist and/or $\dim_\cC A =0$:
\begin{exam}\label{exam:triplet}
 Consider the extension $\cW(p)\subseteq V_{\sqrt{2p}\mathbb{Z}}$, $p\in\mathbb{Z}_{\geq 2}$, where $\cW(p)$ is the triplet $W$-algebra and $V_{\sqrt{2p}\mathbb{Z}}$ is a lattice vertex operator algebra with modified conformal vector. The new Virasoro module structure on $V_{\sqrt{2p}\mathbb{Z}}$ is not semisimple: $\cW(p)$ is the maximal semisimple Virasoro submodule. Although $V_{\sqrt{2p}\mathbb{Z}}$ is rational, $\cW(p)$ is $C_2$-cofinite \cite{AM}, and the tensor category of grading-restricted generalized $\cW(p)$-modules is rigid \cite{TW}, $\cW(p)$ has a non-semisimple representation category and thus is not rational. Theorems \ref{thm:main_voa_thm_intro} and \ref{thm:voa_rat_thm_intro} fail because the inclusion $\cW(p)\hookrightarrow V_{\sqrt{2p}\mathbb{Z}}$ has no left inverse.

 Actually, $V_{\sqrt{2p}\mathbb{Z}}$ is not quite strongly rational because it is not self-contragredient with respect to the modified conformal structure, so strictly speaking, we are in the setting of Theorem \ref{thm:main_voa_thm_intro} rather than Theorem \ref{thm:voa_rat_thm_intro}. Since $V_{\sqrt{2p}\ZZ}$ is not self-dual in the rigid tensor category of $\cW(p)$-modules, we cannot define the index of the inclusion $\cW(p)\subseteq V_{\sqrt{2p}\ZZ}$ as in Theorem \ref{thm:main_voa_thm_intro}. However, the categorical dimension of $V_{\sqrt{2p}\ZZ}$ in the ribbon category of $\cW(p)$-modules still makes sense; we will show that this categorical dimension is $0$.
 
To determine $\dim_{\cW(p)} V_{\sqrt{2p}\mathbb{Z}}$, we use the (non-split) short exact $\cW(p)$-module sequence
 \begin{equation*}
  0\longrightarrow \cW(p)\longrightarrow V_{\sqrt{2p}\mathbb{Z}}\longrightarrow X_{p-1}^-\longrightarrow 0,
 \end{equation*}
where $X_{p-1}^-$ is a simple $\cW(p)$-module (using the notation of \cite{TW}). Its projective cover $P_{p-1}^-$, which was constructed explicitly in \cite{AM2} and shown to be projective in \cite{NT}, has length $4$ with two composition factors isomorphic to $X_{p-1}^-$ and two isomorphic to $\cW(p)$. Thus because categorical dimension is a well-defined function on the Grothendieck group of a ribbon category, we get
\begin{equation*}
 \dim_{\cW(p)} V_{\sqrt{2p}\ZZ} =\dim_{\cW(p)} \cW(p)+\dim_{\cW(p)} X_{p-1}^- =\frac{1}{2}\dim_{\cW(p)} P_{p-1}^- =0.
\end{equation*}
The last step uses the fact that the dimension of a projective object in a non-semisimple finite ribbon category is $0$, which is explained for example in \cite[Remark 4.6(1)]{GR}.
\end{exam}

\section{Generalization to vertex operator superalgebras}\label{sec:VOSA}

In this final section, we explain how Theorems \ref{thm:main_voa_thm_intro} and \ref{thm:voa_rat_thm_intro} generalize to the setting $V\subseteq A$ where $A$ is a vertex operator superalgebra and $V$ is a ($\ZZ$-graded) vertex operator algebra contained in the even part of $A$. 

There are slightly variant definitions of vertex operator superalgebra in the literature; see \cite[Section 2.2]{Li2} for a definition consistent with our usage of the term. In particular, a vertex operator superalgebra $A$ has two gradings, a $\frac{1}{2}\ZZ$-conformal weight grading $A=\bigoplus_{n\in\frac{1}{2}\ZZ} A_{(n)}$ and a $\ZZ/2\ZZ$-parity grading $A=A^{\bar{0}}\oplus A^{\bar{1}}$. While we do require these two gradings to be compatible in the sense that 
$$A_{(n)}=(A_{(n)}\cap A^{\bar{0}})\oplus(A_{(n)}\cap A^{\bar{1}})$$
for each $n\in\frac{1}{2}\ZZ$, we do not require that $A^{\bar{i}}=\bigoplus_{n\in\frac{i}{2}+\ZZ} A_{(n)}$ for $i=0,1$. In particular, this notion of vertex operator superalgebra incorporates the following four possibilities for $A$, using terminology from \cite{CKL}:
\begin{enumerate}
 \item Vertex operator algebra of correct statistics: $A$ is $\ZZ$-graded by conformal weights and $A^{\bar{1}}=0$,
 \item Vertex operator algebra of incorrect statistics: $A$ is $\frac{1}{2}\ZZ$-graded by conformal weights and $A^{\bar{1}}=0$.
 \item Vertex operator superalgebra of correct statistics: $A^{\bar{i}}=\bigoplus_{n\in\frac{i}{2}+\ZZ} A_{(n)}$ for $i=0,1$.
 \item Vertex operator superalgebra of incorrect statistics: $A$ is $\ZZ$-graded by conformal weights and $A^{\bar{i}}\neq 0$.
\end{enumerate}
Theorems \ref{thm:main_voa_thm_intro} and \ref{thm:voa_rat_thm_intro} cover case (1). Theorem \ref{thm:main_voa_thm_intro} remains valid in case (2) as well, but Theorem \ref{thm:voa_rat_thm_intro} requires modification since the index $[A:V]$ of Theorem \ref{thm:main_voa_thm_intro} is no longer the categorical dimension of $A$ as a $V$-module when $A$ is $\frac{1}{2}\ZZ$-graded.

For a general vertex operator superalgebra $A$, we use the notation $A_i=\bigoplus_{n\in\frac{i}{2}+\ZZ} A_{(n)}$ for $i=0,1$, so that $A_0\cap A^{\bar{0}}$ is a $\ZZ$-graded vertex operator algebra conformally embedded in $A$. We have two vertex operator (super)algebra automorphisms of $A$: the parity involution
\begin{equation*}
 P_A = \Id_{A^{\bar{0}}}\oplus(-\Id_{A^{\bar{1}}})
\end{equation*}
and the twist
\begin{equation*}
 \theta_A = \Id_{A_0}\oplus(-\Id_{A_1}).
\end{equation*}
Because the $\frac{1}{2}\ZZ$- and $\ZZ/2\ZZ$-gradings of $A$ are compatible, $\langle P_A,\theta_A\rangle$ is an abelian subgroup of $\mathrm{Aut}(A)$ of order at most $4$.

Let $A$ be a vertex operator superalgebra, $V$ a $\ZZ$-graded vertex operator algebra conformally embedded into $A_0\cap A^{(0)}$, and $\cC$ a category of grading-restricted generalized $V$-modules that includes $A$ and admits the braided tensor category structure of \cite{HLZ1}-\cite{HLZ8}. The analogue of Theorem \ref{thm:HKL_main_theo} for this setting was proved in \cite{CKL} and discussed further in \cite{CKM1}. To begin,  \cite[Section 2]{CKM1} constructs an auxiliary supercategory $\mathcal{SC}$ whose objects are $V$-modules in $\cC$ equipped with parity decompositions, and whose morphisms include both even and odd $V$-module homomorphisms. The \textit{underlying category} $\underline{\mathcal{SC}}$ has the same objects of $\mathcal{SC}$ but only the even morphisms, which preserve the parity decompositions of objects; $\underline{\mathcal{SC}}$ is a braided tensor category with braiding isomorphisms modified by a sign factor to account for parity. Then the following theorem comes from \cite[Theorems 3.13 and 3.14]{CKL} and \cite[Remark 2.27]{CKM1}:

\begin{theo}\label{thm:CKL_theo}
 Let $A$ be a vertex operator superalgebra, $V\subseteq A_0\cap A^{\bar{0}}$ a conformal inclusion of vertex operator algebras, and $\cC$ a category of grading-restricted generalized $V$-modules that includes $A$ and admits the braided tensor category structure of \cite{HLZ8}. Then:
 \begin{enumerate}
  \item The vertex operator superalgebra $A$ is a commutative algebra in $\underline{\mathcal{SC}}$. 
  \item The category $\underline{\mathcal{SC}}_A^0$ of local $A$-modules is the full subcategory of grading-restricted generalized $A$-modules which are objects of $\cC$ when viewed as $V$-modules.
 \end{enumerate}
\end{theo}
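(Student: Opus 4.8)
The plan is to follow the proof of Theorem \ref{thm:HKL_main_theo} essentially verbatim, now working in the braided tensor category $\underline{\mathcal{SC}}$ in place of $\cC$ and carefully tracking the parity signs that distinguish the superalgebra case. For part (1), I would first identify the algebra data: the unit $\iota_A: V\rightarrow A$ is the inclusion, which is an even $V$-module map since $V\subseteq A^{\bar{0}}$, and the multiplication $\mu_A: A\tens A\rightarrow A$ is the map induced by the vertex operator $Y_A$. Since $Y_A$ is parity preserving, it is an \emph{even} intertwining operator among $V$-modules and hence a morphism in $\underline{\mathcal{SC}}$. The unit and associativity axioms for $(A,\mu_A,\iota_A)$ then follow from the vacuum and associativity properties of $Y_A$ exactly as in the non-super case; these do not involve the braiding and so are insensitive to the sign modification.

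The one genuinely super-flavored point in part (1) is commutativity, $\mu_A\circ\cR_{A,A}=\mu_A$, where $\cR$ now denotes the sign-modified braiding of $\underline{\mathcal{SC}}$. Here I would invoke the super skew-symmetry
\begin{equation*}
 Y_A(a,x)b=(-1)^{|a||b|}e^{xL(-1)}Y_A(b,-x)a
\end{equation*}
for homogeneous $a,b\in A$ of parities $|a|,|b|$. In the non-super setting, skew-symmetry together with the realization of the braiding as analytic continuation is exactly what yields $\mu_A\circ\cR_{A,A}=\mu_A$; in $\underline{\mathcal{SC}}$ the braiding carries an additional factor $(-1)^{|m||n|}$ on homogeneous components, which is precisely engineered to cancel the sign $(-1)^{|a||b|}$ appearing in super skew-symmetry. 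Thus the same analytic-continuation argument goes through, with the two signs cancelling.

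For part (2), I would characterize an $A$-module $(X,\mu_X)$ in $\underline{\mathcal{SC}}$ by the intertwining operator $Y_X$ of type $\binom{X}{A\,X}$ that induces $\mu_X$, and show that locality corresponds exactly to $X$ being a genuine grading-restricted $A$-module. The $A$-module associativity axiom forces $Y_X$ to satisfy the superalgebra module compatibility, so the only remaining issue is that a priori $Y_X$ could involve non-integral powers of $x$. The locality condition $\mu_X\circ\cR_{X,A}\circ\cR_{A,X}=\mu_X$ in $\underline{\mathcal{SC}}$ encodes trivial double-braiding monodromy, and with the sign modification built into $\cR$ this is equivalent to $Y_X$ involving only integral powers of $x$ (i.e. $Y_X$ landing in $X((x))$ rather than the larger $X\lbrace x\rbrace$). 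This is exactly the condition that $X$ is an honest untwisted $A$-module rather than a $\theta_A$-twisted one; conversely, any grading-restricted $A$-module in $\cC$ yields such a $Y_X$ and hence a local module.

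The main obstacle I anticipate is the careful sign bookkeeping underlying both parts simultaneously: the sign in the modified braiding of $\underline{\mathcal{SC}}$ must match the sign in super skew-symmetry (for commutativity) while at the same time producing the correct monodromy for the locality--integrality equivalence. This is really the content of the constructions in \cite[Section 2]{CKM1} and \cite{CKL}, where the braiding on $\underline{\mathcal{SC}}$ is defined precisely so that both statements hold at once; verifying the compatibility of the parity and conformal-weight gradings (so that $\langle P_A,\theta_A\rangle$ acts coherently and $\theta_A$ is the categorical twist) is what makes the cancellations consistent across the whole argument.
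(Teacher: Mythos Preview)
The paper does not give its own proof of this theorem: it is quoted directly from \cite[Theorems 3.13 and 3.14]{CKL} and \cite[Remark 2.27]{CKM1}, with no argument supplied. So there is no ``paper's proof'' to compare against here.

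That said, your outline is essentially the content of those references. The algebra data are identified exactly as you describe, the unit and associativity axioms transfer verbatim from the non-super case, and super skew-symmetry together with the sign-modified braiding on $\underline{\mathcal{SC}}$ is precisely what gives commutativity. For part (2), the equivalence between locality and integrality of powers in $Y_X$ is again the same argument as in \cite{HKL}, with the parity sign absorbed into the modified braiding. Your closing remark is apt: the real work is in setting up $\underline{\mathcal{SC}}$ so that the signs are consistent, and that is exactly what \cite[Section 2]{CKM1} does.
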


See for example \cite[Definition 3.1]{CKM1} for the definition of grading-restricted generalized module for a vertex operator superalgebra. We emphasize that morphisms in the braided tensor category  $\underline{\mathcal{SC}}_A^0$ are even, that is, they preserve parity decompositions of grading-restricted generalized $A$-modules. Thus $\underline{\mathcal{SC}}_A^0$ is semisimple in the sense of Section \ref{sec:C_ss_to_CA_ss} if and only if every $\ZZ/2\ZZ$-graded submodule of any object in $\underline{\mathcal{SC}}_A^0$ has a $\ZZ/2\ZZ$-graded complement; in particular, an object $X$ of $\underline{\mathcal{SC}}_A^0$ is simple if its only $\ZZ/2\ZZ$-graded submodules are $0$ and $X$. Note that a simple $A$-module in this sense may have non-trivial $A$-invariant subspaces that are not parity-graded; see \cite[Section 4.2.1]{CKM1} for a discussion of this issue.

Now the following is the superalgebra generalization of Theorem \ref{thm:main_voa_thm_intro}:
\begin{theo}\label{thm:main_vosa_thm}
 Let $A$ be a vertex operator superalgebra, $V\subseteq A_0\cap A^{\bar{0}}$ a conformal inclusion of vertex operator algebras, and $\cC$ a category of grading-restricted generalized $V$-modules that includes $A$ and admits the braided tensor category structure of \cite{HLZ8}. Assume that moreover:
 \begin{itemize}
  \item There is a $V$-homomorphism $\varepsilon_A: A\rightarrow V$ such that $\varepsilon_A\circ\iota_A=\Id_V$.
  \item The vertex operator superalgebra $A$ is a rigid and self-dual object of $\cC$ with evaluation $\varepsilon_A\circ\mu_A: A\tens A\rightarrow V$ and some coevaluation $i_A: V\rightarrow A\tens A$.
  \item For some non-zero index $[A:V]\in\CC$, $\mu_A\circ i_A=[A:V]\iota_A$.
 \end{itemize}
Then:
\begin{enumerate}
 \item If $\cC$ is semisimple, then $\underline{\mathcal{SC}}_A$ is also semisimple.
 \item If $\cC$ is rigid and $\underline{\mathcal{SC}}_A^0$ is semisimple, then $\cC$ is also semisimple.
\end{enumerate}
\end{theo}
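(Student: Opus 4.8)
The plan is to deduce both assertions from the abstract Theorems~\ref{thm:main_cat_thm_1} and~\ref{thm:main_cat_thm_2}, applied not to $\cC$ but to the braided tensor category $\underline{\mathcal{SC}}$ of \cite{CKM1}. By Theorem~\ref{thm:CKL_theo}, $A$ is a commutative algebra in $\underline{\mathcal{SC}}$, whose unit object is $V$ (with purely even parity) and whose local module category $\underline{\mathcal{SC}}_A^0$ is the relevant category of grading-restricted generalized $A$-modules. So the whole argument reduces to transporting the hypotheses on $A$ from $\cC$ into the super setting, and to comparing semisimplicity and rigidity of $\cC$ with those of $\underline{\mathcal{SC}}$.

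First I would record two comparison observations. Since an object of $\underline{\mathcal{SC}}$ is a $\ZZ/2\ZZ$-graded object of $\cC$ and its morphisms are the parity-preserving $\cC$-morphisms, $\cC$ is semisimple if and only if $\underline{\mathcal{SC}}$ is: a parity-graded subobject of $W^{\bar 0}\oplus W^{\bar 1}$ has a parity-graded complement exactly when its even and odd parts have complements in $\cC$, and purely even objects recover $\cC$. Likewise, because $\underline{\mathcal{SC}}$ shares the tensor product, associativity, and unit isomorphisms of $\cC$ and differs only by a sign in the braiding, rigidity is a property of this common monoidal structure alone; the $\cC$-dual $W^*$ carries the evident parity grading with even evaluation and coevaluation, so $\cC$ rigid implies $\underline{\mathcal{SC}}$ rigid.

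Next I would verify that $A$, as an algebra in $\underline{\mathcal{SC}}$, satisfies Assumption~\ref{assum:main_assum_1} with $\vac=V$. The unit $\iota_A$ and multiplication $\mu_A$ are even by definition of a vertex operator superalgebra. Replacing $\varepsilon_A$ by its even part (which still satisfies $\varepsilon_A\circ\iota_A=\Id_V$, as $V$ is purely even) makes $\varepsilon_A$ a morphism of $\underline{\mathcal{SC}}$, so the evaluation $\varepsilon_A\circ\mu_A$ is even. Since rigidity does not see the braiding and the monoidal structures agree, the snake identities of Assumption~\ref{assum:main_assum_1}(2) persist in $\underline{\mathcal{SC}}$; the only delicate point is that the coevaluation $i_A$ is even, which I would obtain either by passing to its even part or from the uniqueness of a coevaluation compatible with the given even evaluation. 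Finally, Assumption~\ref{assum:main_assum_1}(3) and $[A:V]\neq 0$ involve only the even morphisms $\mu_A$, $i_A$, $\iota_A$, so they transport verbatim.

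With these in hand the two assertions are immediate. For~(1), if $\cC$ is semisimple then so is $\underline{\mathcal{SC}}$, and Theorem~\ref{thm:main_cat_thm_1} applied to $A$ in $\underline{\mathcal{SC}}$ yields semisimplicity of $\underline{\mathcal{SC}}_A$. For~(2), if $\cC$ is rigid then $\underline{\mathcal{SC}}$ is a rigid braided tensor category containing the commutative algebra $A$, so with $\underline{\mathcal{SC}}_A^0$ semisimple and $[A:V]\neq 0$, Theorem~\ref{thm:main_cat_thm_2} gives semisimplicity of $\underline{\mathcal{SC}}$, whence $\cC$ is semisimple by the first comparison observation. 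I expect the main obstacle to be the verification of Assumption~\ref{assum:main_assum_1}(2) in the super setting: one must check carefully that rigidity and self-duality are being asserted against the super monoidal structure and that the coevaluation can be chosen even, despite the braiding --- where the super sign lives --- being irrelevant to the duality axioms.
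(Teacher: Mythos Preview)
Your proposal is correct and follows essentially the same approach as the paper's proof: transport the hypotheses into $\underline{\mathcal{SC}}$ by making $\varepsilon_A$ and $i_A$ even, observe that semisimplicity and rigidity pass between $\cC$ and $\underline{\mathcal{SC}}$, and then invoke Theorems~\ref{thm:main_cat_thm_1} and~\ref{thm:main_cat_thm_2}. The paper handles the evenness of the coevaluation by citing the proof of \cite[Theorem 4.15]{McR} and the rigidity of $\underline{\mathcal{SC}}$ by citing \cite[Lemma 2.72]{CKM1}, but your justifications for these points are along the same lines.
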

\begin{proof}
 First note that $\cC$ is semisimple if and only if $\underline{\mathcal{SC}}$ is semisimple: In one direction, surjections in $\underline{\mathcal{SC}}$ have the form
 \begin{equation*}
  f^{\bar{0}}\oplus f^{\bar{1}}: W_1^{\bar{0}}\oplus W_1^{\bar{1}}\rightarrow W_2^{\bar{0}}\oplus W_2^{\bar{1}}
 \end{equation*}
where $f^{\bar{i}}: W_1^{\bar{i}}\rightarrow W_2^{\bar{i}}$ for $i=0,1$ are surjections in $\cC$. Thus surjections in $\underline{\mathcal{SC}}$ split if they do in $\cC$. Conversely, any surjection $f: W_1\rightarrow W_2$ in $\cC$ yields a surjection
\begin{equation*}
 f\oplus 0: W_1\oplus 0\rightarrow W_2\oplus 0
\end{equation*}
in $\underline{\mathcal{SC}}$, so $f$ splits if $\underline{\cS\cC}$ is semisimple.

Next, we claim that we may assume $\varepsilon_A$ is even, that is, $\varepsilon_A\vert_{A^{\bar{1}}}=0$, and thus $\varepsilon_A$ defines a morphism in $\underline{\mathcal{SC}}$. Indeed, if we let $p_{\bar{0}}: A\rightarrow A^{\bar{0}}$ denote the $V$-module projection with respect to the direct sum decomposition $A=A^{\bar{0}}\oplus A^{\bar{1}}$, then 
$$\varepsilon_A\circ p_{\bar{0}}\circ\iota_A=\varepsilon_A\circ\iota_A=\Id_V$$ 
since $V\subseteq A^{\bar{0}}$. So we may replace $\varepsilon_A$ with $\varepsilon_A\circ p_{\bar{0}}$ if necessary. 

Now because $\varepsilon_A$ and $\mu_A$ are both morphisms in $\underline{\mathcal{SC}}$, so is the evaluation $\varepsilon_A\circ\mu_A$. Then we may assume that the coevaluation $i_A: V\rightarrow A\tens A$ is even and thus defines a morphism in $\underline{\mathcal{SC}}$, as explained for example in the proof of \cite[Theorem 4.15]{McR}. Thus $A$ is rigid and self-dual in $\underline{\mathcal{SC}}$ as well as in $\cC$. Moreover, if $\cC$ is a rigid tensor category, then so is $\underline{\mathcal{SC}}$ (see \cite[Lemma 2.72]{CKM1}).

Replacing $i_A$ with its even part if necessary will not change the index $[A:V]$. Thus the two conclusions of the theorem follow immediately from Theorems \ref{thm:main_cat_thm_1} and \ref{thm:main_cat_thm_2}, since $A$ is a commutative algebra in $\underline{\mathcal{SC}}$ by Theorem \ref{thm:CKL_theo}. Note that in the first assertion we are using the full category $\underline{\mathcal{SC}}_A$ of non-local $A$-modules in $\underline{\mathcal{SC}}$.
\end{proof}

In the superalgebra setting, non-local $A$-modules associated to the group $\langle P_A,\theta_A\rangle\subseteq\mathrm{Aut}(A)$ are also of interest. For $g\in\langle P_A,\theta_A\rangle$, we say that an object $(X,\mu_X)$ in $\underline{\mathcal{SC}}_A$ is a \textit{$g$-twisted $A$-module} if 
\begin{equation*}
 \mu_X\circ\cR_{X,A}\circ\cR_{A,X}\circ(g\boxtimes\Id_X)=\mu_X;
\end{equation*}
in particular, a local module in $\underline{\mathcal{SC}}_A^0$ is the same as an \textit{untwisted} $A$-module in $\underline{\mathcal{SC}}$. When $A$ is a vertex operator (super)algebra of (in)correct statistics, the untwisted module category $\underline{\cS\cC}^0_A$ is called the ``Neveu-Schwarz sector'' in the physics literature; the category of $g$-twisted $A$-modules (where $g=P_A$ if $A$ is a superalgebra and $g=\theta_A$ if $A$ is an algebra of incorrect statistics) is called the ``Ramond sector.''

\begin{corol}\label{cor:vosa_tw_mods}
 In the setting and under the assumptions of Theorem \ref{thm:main_vosa_thm}, suppose that $\cC$ is semisimple. Then for each $g\in \langle P_A, \theta_A\rangle$, the category $\underline{\mathcal{SC}}^g_A$ of $g$-twisted $A$-modules is semisimple. In particular, $\underline{\mathcal{SC}}_A^0$ is semisimple.
\end{corol}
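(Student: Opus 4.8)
The plan is to reduce the statement to Theorem \ref{thm:main_cat_thm_1} applied inside $\underline{\mathcal{SC}}$. The key observation is that each $g$-twisted module category $\underline{\mathcal{SC}}_A^g$ is itself the category of modules for a commutative algebra in a semisimple braided tensor category, so that the Maschke-type argument of Theorem \ref{thm:main_cat_thm_1} applies directly. First I would note that, as established in the proof of Theorem \ref{thm:main_vosa_thm}, the hypotheses guarantee that $A$ is a rigid, self-dual commutative algebra in $\underline{\mathcal{SC}}$ satisfying Assumption \ref{assum:main_assum_1} with $[A:V]\neq 0$, and that $\underline{\mathcal{SC}}$ is semisimple whenever $\cC$ is. Thus the full module category $\underline{\mathcal{SC}}_A$ is semisimple by Theorem \ref{thm:main_cat_thm_1}.

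Next I would explain why the full semisimplicity of $\underline{\mathcal{SC}}_A$ descends to each twisted subcategory $\underline{\mathcal{SC}}_A^g$. For a fixed $g\in\langle P_A,\theta_A\rangle$, the defining condition
\begin{equation*}
 \mu_X\circ\cR_{X,A}\circ\cR_{A,X}\circ(g\tens\Id_X)=\mu_X
\end{equation*}
cuts out a full subcategory of $\underline{\mathcal{SC}}_A$. The crucial point is that $\underline{\mathcal{SC}}_A^g$ is an abelian subcategory closed under subobjects and quotients in $\underline{\mathcal{SC}}_A$: since $g$ is an algebra automorphism of $A$ and the twisting condition is expressed purely through the natural braiding and associativity isomorphisms, any $A$-submodule or quotient of a $g$-twisted module again satisfies the same identity by naturality. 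Hence a surjection in $\underline{\mathcal{SC}}_A^g$ is in particular a surjection in $\underline{\mathcal{SC}}_A$, which splits there by the semisimplicity of $\underline{\mathcal{SC}}_A$; and since the kernel and the splitting image of a $g$-twisted morphism are again $g$-twisted, the splitting lives in $\underline{\mathcal{SC}}_A^g$. Therefore $\underline{\mathcal{SC}}_A^g$ is semisimple. The special case $g=\Id$ recovers that $\underline{\mathcal{SC}}_A^0$ is semisimple.

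The step I expect to require the most care is verifying that $\underline{\mathcal{SC}}_A^g$ is closed under subobjects and quotients, equivalently that the projection of any object of $\underline{\mathcal{SC}}_A$ onto its $g$-twisted part is an $A$-module map. Concretely, for each $g$ one can build an idempotent endomorphism in $\mathrm{End}_{\underline{\mathcal{SC}}_A}(X)$ analogous to the projection $\pi_X$ of Section \ref{sec:CA_ss_to_C_ss}, but with the monodromy $\cR_{X,A}\circ\cR_{A,X}$ twisted by $g$; the image of this idempotent is the maximal $g$-twisted submodule of $X$, and showing it is a morphism in $\underline{\mathcal{SC}}_A$ is the analogue of Lemma \ref{lem:proj_natural}. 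Alternatively, and more cleanly, one can invoke the fact that each $\underline{\mathcal{SC}}_A^g$ is a module category for the commutative algebra $A$ twisted by the automorphism $g$, so the averaging argument of Theorem \ref{thm:main_cat_thm_1} applies verbatim once one checks that $g$ being an automorphism preserves Assumption \ref{assum:main_assum_1}; this is immediate since $g$ fixes $V$ pointwise and commutes with $\mu_A$, $\iota_A$, and $\varepsilon_A$, and hence does not alter the index $[A:V]$.
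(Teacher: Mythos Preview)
Your proposal is correct and follows the paper's strategy: semisimplicity of $\underline{\mathcal{SC}}_A$ (already contained in Theorem \ref{thm:main_vosa_thm}(1)) together with the fact that a surjection in $\underline{\mathcal{SC}}_A^g$ remains surjective in $\underline{\mathcal{SC}}_A$, after which fullness of the subcategory makes the section automatic. The only difference is in how that surjectivity step is checked. The paper invokes the twisted projection endomorphism $\pi_X^g$ from \cite{McR}---precisely the idempotent you anticipate in your third paragraph---to factor an arbitrary test morphism $h:X_2\to X_3$ in $\underline{\mathcal{SC}}_A$ through the $g$-twisted object $\pi_{X_3}^g(X_3)$, whence $h=0$. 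Your second-paragraph naturality argument that quotients of $g$-twisted modules are again $g$-twisted reaches the same conclusion more cheaply, so the projection machinery you flag as the delicate step is in fact not needed. One small cleanup: since $\underline{\mathcal{SC}}_A^g$ is a \emph{full} subcategory, the section $\sigma:X_2\to X_1$ lies in $\underline{\mathcal{SC}}_A^g$ automatically; there is no need to argue about kernels or splitting images being $g$-twisted.
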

\begin{proof}
 As in the proof of Theorem \ref{thm:main_voa_thm_intro} in Section \ref{sec:VOA_app}, we just need to verify that any surjection $f: X_1\rightarrow X_2$ in $\underline{\mathcal{SC}}_A^g$ is still a surjection in $\underline{\mathcal{SC}}_A$. Thus suppose $h: X_2\rightarrow X_3$ is a morphism in $\underline{\mathcal{SC}}_A$ such that $h\circ f=0$; we need to show that $h=0$.
 
 It was shown in \cite[Section 3]{McR} that every object $X$ of $\underline{\mathcal{SC}}_A$ has an $\underline{\mathcal{SC}}_A$-endomorphism $\pi^g_X$, generalizing $\pi_X$ of Section \ref{sec:CA_ss_to_C_ss}, such that $\pi^g_X(X)$ is a $g$-twisted $A$-module, $\pi^g_X=\Id_X$ if $X$ is $g$-twisted, and $\pi^g_X$ commutes with morphisms in $\underline{\mathcal{SC}}_A$. In particular,
 \begin{equation*}
  h=h\circ\pi^g_{X_2} =\pi_{X_3}^g\circ h
 \end{equation*}
because $X_2$ is $g$-twisted, so $h$ actually defines a morphism $h: X_2\rightarrow\pi_{X_3}^g(X_3)$ in $\underline{\mathcal{SC}}_A^g$. Thus $h=0$ because $f$ is surjective in $\underline{\mathcal{SC}}_A^g$.
\end{proof}

Now we generalize Theorem \ref{thm:voa_rat_thm_intro} to the superalgebra setting. Note that rationality for $\frac{1}{2}\ZZ$-graded vertex operator (super)algebras is defined in terms of complete reducibility for $\frac{1}{2}\NN$-gradable weak modules.

\begin{theo}\label{thm:vosa_rat_1}
 Let $A$ be a vertex operator superalgebra and $V\subseteq A_0\cap A^{\bar{0}}$ a conformal inclusion of vertex operator algebras. 
 \begin{enumerate}
  \item If $V$ is strongly rational, then $A$ is also strongly rational provided:
 \begin{itemize}
  \item $A$ is simple and positive energy.
  \item The dimension of $A_0\cap A^{\bar{0}}$ in the modular tensor category of $V$-modules is non-zero.
 \end{itemize}
 
 \item If $A$ is strongly rational, then $V$ is also strongly rational provided:
 \begin{itemize}
  \item There is a $V$-module homomorphism $\varepsilon_A: A_0\cap A^{\bar{0}}\rightarrow V$ such that $\varepsilon_A\vert_V=\Id_V$.
  \item $V$ is $C_2$-cofinite.
  \item The tensor category $\cC$ of grading-restricted generalized $V$-modules is rigid.
  \item The categorical dimension $\dim_\cC A_0\cap A^{\bar{0}}$ is non-zero.
 \end{itemize}
 \end{enumerate}

\end{theo}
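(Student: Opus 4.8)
The plan is to reduce everything to the purely even, integer-weight subalgebra $B:=A_0\cap A^{\bar{0}}$, which is a genuine $\ZZ$-graded vertex operator algebra sitting between $V$ and $A$, so that $V\subseteq B\subseteq A$. The role of $B$ is twofold. First, the inclusion $V\subseteq B$ is an ordinary conformal inclusion of vertex operator algebras, exactly of the type handled by Theorem \ref{thm:voa_rat_thm_intro}; and because $B$ is $\ZZ$-graded its twist is trivial, so the index of $V\subseteq B$ coincides with the categorical dimension $\dim_\cC B$. This is precisely the quantity appearing in the hypotheses, which explains why the theorem conditions on $\dim_\cC(A_0\cap A^{\bar{0}})$ rather than on the (twist-corrected) dimension of $A$ itself. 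Second, $B=A^{\langle P_A,\theta_A\rangle}$ is the fixed-point subalgebra of the finite abelian group $G=\langle P_A,\theta_A\rangle$ of order at most $4$, so $B\subseteq A$ is of orbifold/extension type, for which the index $[A:B]$ is non-zero by the orbifold non-vanishing results already invoked in the paper (\cite{McR1} and its superalgebra generalization). Thus I would factor each assertion as a composite $V\leftrightarrow B\leftrightarrow A$.

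For (1), assume $V$ is strongly rational, $A$ is simple and positive energy, and $\dim_\cC B\neq 0$. First I would show that $B$ is strongly rational by applying Theorem \ref{thm:voa_rat_thm_intro}(1) to $V\subseteq B$: positive energy is inherited from $A$ (since $\vac\in B$ and $A_{(0)}=\CC\vac$), simplicity of $B=A^G$ follows from simplicity of $A$ by quantum Galois theory \cite{DLM}, and $\dim_\cC B\neq0$ is given. With $B$ strongly rational, its module category is a semisimple modular tensor category and $A$ is a commutative algebra in the associated supercategory $\underline{\mathcal{SC}}$ over $B$ by Theorem \ref{thm:CKL_theo}. I would then verify Assumption \ref{assum:main_assum_1} for $B\subseteq A$ (the projection $\varepsilon$ onto $B$ exists because $A$ is a semisimple $B$-module, $A$ is self-dual because it is simple, and $[A:B]\neq0$ by orbifold non-vanishing) and invoke Corollary \ref{cor:vosa_tw_mods} to conclude that every sector $\underline{\mathcal{SC}}_A^g$, in particular $\underline{\mathcal{SC}}_A^0$, is semisimple. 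Together with the standard facts that $A$ is self-contragredient (via $L(1)A_{(1)}=0$, using that $\omega\in B$ makes $L(1)$ preserve the $B$-module summand complementary to $B$) and $C_2$-cofinite over $B$, this yields strong rationality of $A$.

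For (2), assume $A$ is strongly rational together with the four listed conditions. I would run the same chain in reverse: first obtain strong rationality of $B=A^G$ from that of $A$, and then descend to $V$ via Theorem \ref{thm:voa_rat_thm_intro}(2) applied to $V\subseteq B$, whose hypotheses are exactly the given data $\varepsilon_A\colon B\to V$, $C_2$-cofiniteness of $V$, rigidity of $\cC$, and $\dim_\cC B\neq0$; the remaining parts of strong rationality for $V$ (simplicity by the ideal/skew-symmetry argument, which goes through unchanged since the relevant elements lie in $V\subseteq A^{\bar{0}}$ and hence carry no skew-symmetry sign; positive energy and self-contragredience by restriction from $A$) are then routine. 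The main obstacle is the very first step: deducing strong rationality of the orbifold $B=A^G$ from that of $A$. This is the subalgebra direction, and—unlike the clean extension argument in (1)—it cannot be run through Theorem \ref{thm:main_vosa_thm}(2) applied to $B\subseteq A$, since that would require rigidity of the full $B$-module category, which is not yet available (indeed it is essentially what one is trying to establish). Instead this step must rest on orbifold-rationality and orbifold $C_2$-cofiniteness for the finite abelian, hence solvable, group $G$ (the superalgebra analogues of \cite{CM} and the corresponding $C_2$-cofiniteness theorems). Once $B$ is known to be strongly rational, the descent $B\to V$ is immediate.
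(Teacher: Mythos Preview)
Your outline for part (1) is essentially the paper's proof: factor through $B=A_0\cap A^{\bar 0}$, use Theorem~\ref{thm:voa_rat_thm_intro}(1) for $V\subseteq B$, then Corollary~\ref{cor:vosa_tw_mods} for $B\subseteq A$. (The paper adds one extra step you omit: upgrading semisimplicity of grading-restricted generalized $A$-modules to full rationality for $\frac{1}{2}\NN$-gradable weak modules, via $C_2$-cofiniteness over $V$ and \cite{Mi1}.)

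Part (2) has a real gap, and it is precisely the step you flag as the ``main obstacle.'' You assert that Theorem~\ref{thm:main_vosa_thm}(2) \emph{cannot} be applied to $B\subseteq A$ because rigidity of the $B$-module category is unavailable, and then fall back on superalgebra analogues of \cite{CM}. This is the wrong diagnosis. Rigidity of $\cC_B^0$ is \emph{not} what you are trying to establish (that would be semisimplicity); rigidity is available immediately from \cite[Theorem~1.15]{KO}: since $\cC$ is rigid by hypothesis and $B$ is a commutative algebra in $\cC$ with $\dim_\cC B\neq 0$ by hypothesis, the category $\cC_B^0$ of local $B$-modules is rigid. Because $V$ is $C_2$-cofinite and $B$ is $C_2$-cofinite as a $V$-module, $\cC_B^0$ is the full category of grading-restricted generalized $B$-modules, so Theorem~\ref{thm:main_vosa_thm}(2) applies directly to $B\subseteq A$ (the three bullet-point assumptions for the orbifold extension $B=A^{\langle P_A,\theta_A\rangle}\subseteq A$ having been checked in \cite{McR}), and semisimplicity of $\underline{\cS\cC}_A^0$ over $B$ comes from the assumed rationality of $A$. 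This yields rationality of $B$ without any appeal to \cite{CM}.

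Your fallback route is also shaky on its own terms: \cite{CM} treats vertex operator \emph{algebras}, and the ``superalgebra analogues'' you invoke for $A\Rightarrow A^G$ are not in the literature in the generality needed here (the cited \cite{DH} goes in the opposite direction). So the appeal to orbifold rationality would leave part (2) resting on an unproved claim, whereas the KO rigidity-transfer argument is self-contained within the paper's hypotheses.
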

\begin{proof}
For the first assertion, $A$ is simple and positive energy by assumption, and this implies that $A$ is also self-contragredient as in the proof of Theorem \ref{thm:voa_rat_thm_intro}(1) (see also \cite[Remark 4.11]{CKM1}). Also, $A$ is $C_2$-cofinite as in the proof of Theorem \ref{thm:voa_rat_thm_intro}(1). It remains to show that $A$ is rational.

 Since $A$ is simple and positive energy, the same holds for $A_0\cap A^{\bar{0}}$ (simplicity follows from the main theorem of \cite{DLM}, or from \cite[Theorem 3.2]{McR1} which explicitly covers the superalgebra generality). Thus $A_0\cap A^{\bar{0}}$ is strongly rational by Theorem \ref{thm:voa_rat_thm_intro}(1), and we may consider the orbifold-type extension $A_0\cap A^{\bar{0}}=A^{\langle P_A, \theta_A\rangle}\subseteq A$. The assumptions of Theorem \ref{thm:main_vosa_thm} for such extensions were verified in the proof of \cite[Theorem 4.15]{McR}, so Theorem \ref{thm:main_vosa_thm}(1) and Corollary \ref{cor:vosa_tw_mods} show that every grading-restricted generalized $A$-module is semisimple.
 
 Now to show that $A$ is rational, let $X=\bigoplus_{n\in\frac{1}{2}\NN} X(n)$ be a $\frac{1}{2}\NN$-gradable weak $A$-module. Then $X$ restricts to an $\NN$-gradable weak $V$-module, so because $V$ is rational, $X\cong\bigoplus_{i\in I} W_i$ for irreducible $V$-modules $W_i$; we may assume that either $W_i\subseteq X^{\bar{0}}$ or $W_i\subseteq X^{\bar{1}}$ for each $i$ because $X^{\bar{0}}$ and $X^{\bar{1}}$ are $\NN$-gradable $V$-submodules of $X$. Let $X_i$ denote the $\ZZ/2\ZZ$-graded $A$-submodule of $X$ generated by $W_i$; by \cite[Proposition 4.5.6]{LL}, or \cite[Lemma 3.74]{CKM1}, it is the image of the $V$-module intertwining operator $Y_X\vert_{A\otimes W_i}$ of type $\binom{X}{A\,W_i}$. Then because $A$ and $W_i$ are $C_2$-cofinite $V$-modules, $X_i$ is also $C_2$-cofinite as a $V$-module by the Main Theorem of \cite{Mi1}. In particular, $X_i$ is a grading-restricted generalized $A$-submodule of $X$ and hence is semisimple.
 
 We have now shown that $X=\sum_{i\in I} X_i$ where the $X_i$ are semisimple grading-restricted generalized $A$-submodules. That is, $X$ is a sum, and hence also a direct sum, of irreducible grading-restricted $A$-modules. This shows that $A$ is rational, completing the proof of the first assertion of the theorem.
 
 For the second assertion, the assumptions on $V$ and its module category are enough for strong rationality of $A_0\cap A^{\bar{0}}$ to imply strongly rationality of $V$ via Theorem \ref{thm:voa_rat_thm_intro}. To show that $A_0\cap A^{\bar{0}}$ is in fact strongly rational, note that it is simple by the main theorem of \cite{DLM}, or by \cite[Theorem 3.2]{McR1}, because $A_0\cap A^{\bar{0}} = A^{\langle P_A,\theta_A\rangle}$. Then $A_0\cap A^{\bar{0}}$ is positive energy and self-contragredient because $A$ is, just as in the proof of Theorem \ref{thm:voa_rat_thm_intro}(2). Moreover, $A_0\cap A^{\bar{0}}$ is $C_2$-cofinite because it is a module for the $C_2$-cofinite vertex operator algebra $V$.

 To show $A_0\cap A^{\bar{0}}$ is rational and thus prove second assertion of the theorem, it is enough to show that the category of grading-restricted generalized $A_0\cap A^{\bar{0}}$-modules is semisimple; this is nothing but the category $\cC_{A_0\cap A^{\bar{0}}}^0$ of local modules for the commutative algebra $A_0\cap A^{\bar{0}}$ in the rigid braided tensor category $\cC$ of $V$-modules. To prove that $\cC_{A_0\cap A^{\bar{0}}}^0$ is semisimple, we want to apply Theorem \ref{thm:main_vosa_thm}(2) with $V$ replaced by $A_0\cap A^{\bar{0}}$ and with $\cC$ replaced by $\cC_{A_0\cap A^{\bar{0}}}^0$. As mentioned above, the three basic assumptions of Theorem \ref{thm:main_vosa_thm} were verified for orbifold-type extensions in the proof of \cite[Theorem 4.15]{McR}, but we also need to show that $\cC_{A_0\cap A^{\bar{0}}}^0$ is rigid. In fact, since $A_0\cap A^{\bar{0}}$ is a commutative algebra in $\cC$, the rigidity of $\cC_{A_0\cap A^{\bar{0}}}^0$ will follow from \cite[Theorem 1.15]{KO}  provided that the index $[A: A_0\cap A^{\bar{0}}]$ is non-zero. But just as in the proof of Theorem \ref{thm:voa_rat_thm_intro}, this is the same as $\dim_\cC A_0\cap A^{\bar{0}}$, which is non-zero by assumption.
\end{proof}

Theorem \ref{thm:vosa_rat_1} has the following specializations to orbifold-type extensions. The first
 generalizes \cite[Theorem 4.13]{McR1} and is also similar to \cite[Theorem 4.1]{DH}, which was proved for finite solvable automorphism groups of vertex operator superalgebras:
\begin{corol}\label{cor:vosa_orbifold}
 Suppose $A$ is a simple positive-energy vertex operator superalgebra and $G$ is a finite group of automorphisms of $A$ that includes $P_A$ and $\theta_A$. If $A^G$ is strongly rational, then $A$ is also strongly rational.
\end{corol}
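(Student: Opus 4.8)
The plan is to apply Theorem \ref{thm:vosa_rat_1}(1) to the conformal inclusion $V:=A^G\subseteq A_0\cap A^{\bar 0}$. Since $A^G$ is strongly rational by hypothesis and $A$ is simple and positive energy, the only condition of Theorem \ref{thm:vosa_rat_1}(1) that requires genuine work is the non-vanishing of $\dim_\cC(A_0\cap A^{\bar 0})$, where $\cC$ is the modular tensor category of $A^G$-modules. First I would confirm that $A^G$ really is a conformal subalgebra of $A_0\cap A^{\bar 0}$: the conformal vector of $A$ has integer conformal weight, is even, and is fixed by every automorphism, so it lies in both $A^G$ and $A_0\cap A^{\bar 0}=A^{\langle P_A,\theta_A\rangle}$; and $A^G\subseteq A^{\langle P_A,\theta_A\rangle}$ because $\langle P_A,\theta_A\rangle\subseteq G$. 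Being contained in the $\ZZ$-graded even part $A_0\cap A^{\bar 0}$, the subalgebra $A^G$ is itself a $\ZZ$-graded vertex operator algebra, as required by the setup of Theorem \ref{thm:vosa_rat_1}.

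The key observation is that $\langle P_A,\theta_A\rangle$ is central in $\mathrm{Aut}(A)$: any automorphism of $A$ preserves both the $\ZZ/2\ZZ$-parity grading and the $\frac{1}{2}\ZZ$-conformal-weight grading, and hence commutes with $P_A=\Id_{A^{\bar 0}}\oplus(-\Id_{A^{\bar 1}})$ and with $\theta_A=\Id_{A_0}\oplus(-\Id_{A_1})$. In particular $\langle P_A,\theta_A\rangle$ is normal in $G$, so the finite quotient $H:=G/\langle P_A,\theta_A\rangle$ acts on $A^{\langle P_A,\theta_A\rangle}=A_0\cap A^{\bar 0}$ with fixed-point subalgebra $(A_0\cap A^{\bar 0})^H=A^G$. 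Thus $A^G\subseteq A_0\cap A^{\bar 0}$ is an honest orbifold-type extension of a $\ZZ$-graded vertex operator algebra by the finite group $H$.

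Next I would record that $A_0\cap A^{\bar 0}=A^{\langle P_A,\theta_A\rangle}$ is simple (by the main theorem of \cite{DLM}, or by \cite[Theorem 3.2]{McR1}, which covers the superalgebra generality) and positive energy, since $A$ is. Because $A^G$ is strongly rational, $\cC$ is a semisimple modular tensor category, $A_0\cap A^{\bar 0}$ is a semisimple $A^G$-module, and it is an object of $\cC$ since it is grading-restricted. I can then invoke the orbifold dimension formula \cite[Proposition 4.15]{McR1}, applied to the extension $A^G=(A_0\cap A^{\bar 0})^H\subseteq A_0\cap A^{\bar 0}$, to conclude that
\begin{equation*}
 \dim_\cC(A_0\cap A^{\bar 0})=\vert H\vert=\frac{\vert G\vert}{\vert\langle P_A,\theta_A\rangle\vert}\neq 0,
\end{equation*}
since $\vert H\vert$ is a positive integer and hence non-zero in $\CC$. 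With all three hypotheses of Theorem \ref{thm:vosa_rat_1}(1) now verified, $A$ is strongly rational.

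I expect the main subtlety to be the centrality of $\langle P_A,\theta_A\rangle$ and the resulting identification of $A^G\subseteq A_0\cap A^{\bar 0}$ as an orbifold extension by $H$, which is precisely what allows the dimension formula of \cite[Proposition 4.15]{McR1} to produce the non-zero index $\vert H\vert$; once that is in place, the corollary is an immediate instance of Theorem \ref{thm:vosa_rat_1}(1). One should also take care to check that the cited dimension formula does not secretly require $A_0\cap A^{\bar 0}$ itself to be rational---which is not yet known at this stage---but only that $A^G$ has a rigid semisimple module category and that $A_0\cap A^{\bar 0}$ is a finite-length object of it.
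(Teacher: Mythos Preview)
Your proof is correct and follows the same strategy as the paper: reduce to Theorem \ref{thm:vosa_rat_1}(1) by exhibiting $A^G\subseteq A_0\cap A^{\bar 0}$ as an orbifold extension and invoking \cite[Proposition 4.15]{McR1} to get a non-zero dimension. The only difference is in the choice of normal subgroup: the paper quotients by the full kernel $K$ of the restriction $G\to\mathrm{Aut}(A_0\cap A^{\bar 0})$, which guarantees a faithful action of $G/K$, whereas you quotient by $\langle P_A,\theta_A\rangle$ via your centrality argument. Since $\langle P_A,\theta_A\rangle\subseteq K$ but the containment can in principle be strict, your $H$ need not act faithfully, and the dimension formula of \cite[Proposition 4.15]{McR1} then yields $\vert G/K\vert$ rather than $\vert H\vert$; this does not affect the conclusion, as either quantity is a positive integer.
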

\begin{proof}
 Since $G$ contains $P_A$ and $\theta_A$, the fixed-point subalgebra $A^G$ is a vertex operator subalgebra of $A_0\cap A^{\bar{0}}$. Thus to apply Theorem \ref{thm:vosa_rat_1}(1), we just need to verify that the dimension of $A_0\cap A^{\bar{0}}$ in the modular tensor category of $A^G$-modules is non-zero. In fact, since $A^G=(A_0\cap A^{\bar{0}})^{G/K}$, where $K$ is the kernel of the restriction homomorphism from $G$ to $\mathrm{Aut}(A_0\cap A^{\bar{0}})$, the dimension of $A_0\cap A^{\bar{0}}$ as an $A^G$-module is $\vert G/K\vert$ by \cite[Proposition 4.15]{McR1}.
\end{proof}

\begin{exam}
Similar to Example \ref{exam:orbifold}, let $A$ be a strongly rational vertex operator superalgebra and let $G$ be a finite group of automorphisms of $A$ that includes $P_A$ and $\theta_A$. If $A$ is strongly rational, then Theorem \ref{thm:vosa_rat_1}(2) shows that $V=A^G$ is also strongly rational provided that $V$ is $C_2$-cofinite and the tensor category $\cC$ of grading-restricted generalized $V$-modules is rigid. However, \cite[Corollary 4.23]{McR} shows that we can replace the assumptions that $A$ is strongly rational and that $\cC$ is rigid with the assumption that $A_0\cap A^{\bar{0}}$ is strongly rational (still retaining $C_2$-cofiniteness of $V$ as an assumption).
\end{exam}

\appendix

\section{Detailed calculations for Equation \texorpdfstring{\eqref{diag:s_ac_hom}}{(2.3)}}

Here we provide more details for the calculation \eqref{diag:s_ac_hom} in the proof of Theorem \ref{thm:main_cat_thm_1}, so that the interested reader may see the role of the pentagon and triangle axioms. These properties of a tensor category are not used in the other diagrammatic calculations of Sections \ref{sec:C_ss_to_CA_ss} and \ref{sec:CA_ss_to_C_ss}.

 We start with $\mu_{X_1}\circ(\Id_A\tens S)$, which is the composition
\begin{align*}
 A\tens & X_2\xrightarrow{\Id_A\tens l_{X_2}^{-1}} A\tens(\vac\tens X_2)\xrightarrow{\Id_A\tens(i_A\tens\Id_{X_2})} A\tens((A\tens A)\tens X_2)\nonumber\\
 &\xrightarrow{\Id_A\tens\cA_{A,A,X_2}^{-1}} A\tens(A\tens(A\tens X_2))\xrightarrow{\Id_A\tens(\Id_A\tens \mu_{X_2})} A\tens(A\tens X_2)\nonumber\\
 &\xrightarrow{\Id_A\tens(\Id_A\tens\sigma)} A\tens(A\tens X_1) \xrightarrow{\Id_A\tens\mu_{X_1}} A\tens X_1\xrightarrow{\mu_{X_1}} X_1.
\end{align*}
Using the triangle axiom, the associativity of $\mu_{X_1}$, and the naturality of the associativity isomorphisms, this becomes
\begin{align*}
 A\tens & X_2\xrightarrow{r_A^{-1}\tens\Id_{X_2}} (A\tens\vac)\tens X_2\xrightarrow{(\Id_A\tens i_A)\tens\Id_{X_2}} (A\tens(A\tens A))\tens X_2\nonumber\\
 & \xrightarrow{\cA_{A,A\tens A,X_2}^{-1}} A\tens((A\tens A)\tens X_2) \xrightarrow{\Id_A\tens\cA^{-1}_{A,A,X_2}} A\tens(A\tens (A\tens X_2))\nonumber\\
 & \xrightarrow{\cA_{A,A,A\tens X_2}} (A\tens A)\tens(A\tens X_2)\xrightarrow{\Id_{A\tens A}\tens\mu_{X_2}} (A\tens A)\tens X_2\nonumber\\
 & \xrightarrow{\Id_{A\tens A}\tens \sigma} (A\tens A)\tens X_1\xrightarrow{\mu_A\tens\Id_{X_1}} A\tens X_1\xrightarrow{\mu_{X_1}} X_1.
\end{align*}
We rewrite the composition of associativity isomorphisms here as $\cA_{A\tens A,A,X_2}^{-1}\circ(\cA_{A,A,A}\tens\Id_{X_2})$ using the pentagon axiom, and then move $\mu_A$ forward by the naturality of the associativity isomorphisms:
\begin{align*}
 A\tens & X_2\xrightarrow{r_A^{-1}\tens\Id_{X_2}} (A\tens\vac)\tens X_2\xrightarrow{(\Id_A\tens i_A)\tens\Id_{X_2}} (A\tens(A\tens A))\tens X_2\nonumber\\
 & \xrightarrow{\cA_{A,A,A}\tens\Id_{X_2}} ((A\tens A)\tens A)\tens X_2 \xrightarrow{(\mu_A\tens \Id_{A})\tens\Id_{X_2}} (A\tens A)\tens X_2\nonumber\\
 & \xrightarrow{\cA_{A,A,X_2}^{-1}} A\tens(A\tens X_2) \xrightarrow{\Id_A\tens\mu_{X_2}} A\tens X_2\xrightarrow{\Id_A\tens\sigma} A\tens X_1\xrightarrow{\mu_{X_1}} X_1.
\end{align*}
Now we apply Lemma \ref{rigidlike_lemma} to the first four arrows:
\begin{align*}
 A\tens & X_2 \xrightarrow{l_A^{-1}\tens\Id_{X_2}} (\vac\tens A)\tens X_2\xrightarrow{(i_A\tens\Id_A)\tens\Id_{X_2}} ((A\tens A)\tens A)\tens X_2\nonumber\\
 & \xrightarrow{\cA_{A,A,A}^{-1}\tens\Id_{X_2}} (A\tens(A\tens A))\tens X_2\xrightarrow{(\Id_A\tens\mu_A)\tens\Id_{X_2}} (A\tens A)\tens X_2\nonumber\\
 & \xrightarrow{\cA_{A,A,X_2}^{-1}} A\tens(A\tens X_2) \xrightarrow{\Id_A\tens\mu_{X_2}} A\tens X_2\xrightarrow{\Id_A\tens\sigma} A\tens X_1\xrightarrow{\mu_{X_1}} X_1.
\end{align*}
We rewrite the first two arrows using properties of the unit and naturality of the associativity isomorphisms; we also apply naturality of the associativity isomorphisms and the associativity of $\mu_{X_2}$ to get
\begin{align*}
 A\tens & X_2  \xrightarrow{l_{A\tens X_2}^{-1}}  \vac\tens(A\tens X_2)\xrightarrow{i_A\tens\Id_{A\tens X_2}} (A\tens A)\tens(A\tens X_2)\nonumber\\
 &\xrightarrow{\cA_{A\tens A,A,X_2}} ((A\tens A)\tens A)\tens X_2 \xrightarrow{\cA_{A,A,A}^{-1}\tens\Id_{X_2}} (A\tens(A\tens A))\tens X_2\nonumber\\
 &\xrightarrow{\cA^{-1}_{A,A\tens A,X_2}} A\tens((A\tens A)\tens X_2)\xrightarrow{\Id_A\tens\cA^{-1}_{A,A,X_2}} A\tens(A\tens(A\tens X_2))\nonumber\\
 &\xrightarrow{\Id_A\tens(\Id_A\tens\mu_{X_2})} A\tens(A\tens X_2)\xrightarrow{\Id_A\tens\mu_{X_2}} A\tens X_2\xrightarrow{\Id_A\tens\sigma} A\tens X_1\xrightarrow{\mu_{X_1}} X_1.
\end{align*}
By the pentagon axiom, the four associativity isomorphisms here simplify to $\cA_{A,A,A\tens X_2}^{-1}$, and then we move the first $\mu_{X_2}$ forward in the composition using the naturality of the associativity and left unit isomorphisms to get
\begin{align*}
 A\tens X_2\xrightarrow{\mu_{X_2}} X_2\xrightarrow{l_{X_2}^{-1}} & \vac\tens  X_2  \xrightarrow{i_A\tens\Id_{X_2}}  (A\tens A)\tens X_2\nonumber\\
 &\xrightarrow{\cA_{A,A,X_2}^{-1}} A\tens(A\tens X_2)\xrightarrow{\Id_A\tens\mu_{X_2}} A\tens X_2\xrightarrow{\Id_A\tens\sigma} A\tens X_1\xrightarrow{\mu_{X_1}} X_1.
\end{align*}
This is $S\circ\mu_{X_2}$, as required.

%
%
%


\begin{thebibliography}{EGNO}

\bibitem[ABD]{ABD}
T. Abe, G. Buhl and C. Dong, Rationality, regularity, and $C_2$-cofiniteness, \textit{Trans. Amer. Math. Soc.} \textbf{356} (2004), no. 8, 3391--3402.

\bibitem[AM1]{AM}
D. Adamovi\'{c} and A. Milas, On the triplet vertex algebra $\mathcal{W}(p)$, \textit{Adv. Math.} \textbf{217} (2008), no. 6, 2664--2699.

\bibitem[AM2]{AM2}
D. Adamovi\'{c} and A. Milas, Lattice construction of logarithmic modules for certain vertex algebras, \textit{Selecta Math. (N.S.)} \textbf{15} (2009), no. 4, 535--561.

\bibitem[CM]{CM}  S. Carnahan and M. Miyamoto, Regularity of fixed-point vertex operator algebras, arXiv: 1603.05645.

\bibitem[CKL]{CKL}
 T. Creutzig, S. Kanade and A. Linshaw, Simple current extensions beyond semi-simplicity, \textit{Commun. Contemp. Math.} \textbf{22} (2020), no. 1, 1950001, 49 pp.

\bibitem[CKM1]{CKM1}
T. Creutzig, S. Kanade and R. McRae, Tensor categories for vertex operator superalgebra extensions, to appear in \textit{Mem. Amer. Math. Soc.}, arXiv:1705.05017.

\bibitem[CKM2]{CKM}
T. Creutzig, S. Kanade and R. McRae, Gluing vertex algebras, \textit{Adv. Math.} \textbf{396} (2022), Paper No. 108174, 72 pp.

\bibitem[DH]{DH}
C. Dong and J. Han, On rationality of vertex operator superalgebras, \textit{Int. Math. Res. Not. IMRN} 2014, no. 16, 4379--4399.

\bibitem[DLM]{DLM} C. Dong, H. Li and G. Mason, Compact automorphism groups of vertex operator algebras, \textit{Internat. Math. Res. Notices} 1996, no. 18, 913--921.

\bibitem[EGNO]{EGNO} P. Etingof, S. Gelaki, D. Nikshych and V. Ostrik, \textit{Tensor Categories},  Mathematical Surveys and Monographs, \textbf{205}, American Mathematical Society, Providence, RI, 2015, xvi+343 pp.

\bibitem[FFRS]{FFRS} J. Fr\"{o}hlich, J. Fuchs, I. Runkel and C. Schweigert, Correspondences of ribbon categories, \textit{Adv. Math.} \textbf{199} (2006), no. 1, 192--329.

\bibitem[FHL]{FHL} I. Frenkel, Y.-Z. Huang and J. Lepowsky, On axiomatic approaches to vertex operator algebras and modules, \textit{Mem. Amer. Math. Soc.} \textbf{104} (1993), no. 494, viii+64 pp.

\bibitem[FLM]{FLM} 
I. Frenkel, J. Lepowsky and A. Meurman, \textit{Vertex Operator Algebras and the Monster}, Pure and Applied Mathematics, \textbf{134}, Academic Press, Inc., Boston, MA, 1988, liv+508 pp.

\bibitem[GR]{GR}
A. Gainutdinov and I. Runkel, Projective objects and the modified trace in factorisable finite tensor categories, \textit{Compos. Math.} \textbf{156} (2020), no. 4, 770--821.

\bibitem[Hu1]{Hu-rig_mod} Y.-Z. Huang,  Rigidity and modularity of vertex tensor categories, \textit{Commun. Contemp. Math.} \textbf{10} (2008), suppl. 1, 871--911.

\bibitem[Hu2]{Hu}
Y.-Z. Huang, Cofiniteness conditions, projective covers, and the logarithmic tensor product theory, \textit{J. Pure Appl. Algebra} \textbf{213} (2009), no. 4, 458--475.

\bibitem[Hu3]{Hu-C2-conj}
Y.-Z. Huang, Representations of vertex operator algebras and braided finite tensor categories, \textit{Vertex Operator Algebras and Related Areas}, 97--111, Contemp. Math., \textbf{497}, Amer. Math. Soc., Providence, RI, 2009.

\bibitem[HKL]{HKL}
Y.-Z. Huang, A. Kirillov, Jr. and J. Lepowsky, Braided tensor categories and extensions of vertex operator algebras, \textit{Comm. Math. Phys.}  \textbf{337}  (2015), no. 3, 1143--1159.

\bibitem[HL]{HL}
Y.-Z. Huang and J. Lepowsky,
Tensor categories and the mathematics of rational and logarithmic conformal field theory,
\emph{J. Phys. A} {\bf 46} (2013), no. 49, 494009, 21 pp. 

\bibitem[HLZ1]{HLZ1}
Y.-Z. Huang, J. Lepowsky and L. Zhang, Logarithmic tensor category theory for generalized modules for a
conformal vertex algebra, I: Introduction and strongly graded algebras and their generalized modules, \textit{Conformal Field Theories and Tensor Categories}, 169--248, Math. Lect. Peking Univ., Springer, Heidelberg, 2014.
	
\bibitem[HLZ2]{HLZ2}
Y.-Z. Huang, J. Lepowsky and L. Zhang, Logarithmic tensor category theory for 
generalized modules for a conformal vertex algebra, II: Logarithmic formal 
calculus and properties of logarithmic intertwining operators, arXiv:1012.4196.
	
\bibitem[HLZ3]{HLZ3}
Y.-Z. Huang, J. Lepowsky and L. Zhang, Logarithmic tensor category theory for 
generalized modules for a conformal vertex algebra, III: Intertwining maps and 
tensor product bifunctors, arXiv:1012.4197.
	
\bibitem[HLZ4]{HLZ4}
Y.-Z. Huang, J. Lepowsky and L. Zhang, Logarithmic tensor category theory for 
generalized modules for a conformal vertex algebra, IV: Constructions of tensor 
product bifunctors and the compatibility conditions, arXiv:1012.4198.
	
\bibitem[HLZ5]{HLZ5}
Y.-Z. Huang, J. Lepowsky and L. Zhang, Logarithmic tensor category theory for 
generalized modules for a conformal vertex algebra, V: Convergence condition 
for intertwining maps and the corresponding compatibility condition, 
arXiv:1012.4199.
	
\bibitem[HLZ6]{HLZ6}
Y.-Z. Huang, J. Lepowsky and L. Zhang, Logarithmic tensor category theory for 
generalized modules for a conformal vertex algebra, VI: Expansion condition, 
associativity of logarithmic intertwining operators, and the associativity 
isomorphisms, arXiv:1012.4202.
	
\bibitem[HLZ7]{HLZ7}
Y.-Z. Huang, J. Lepowsky and L. Zhang, Logarithmic tensor category theory for 
generalized modules for a conformal vertex algebra, VII: Convergence and 
extension properties and applications to expansion for intertwining maps, 
arXiv:1110.1929.
	
\bibitem[HLZ8]{HLZ8}
Y.-Z. Huang, J. Lepowsky and L. Zhang, Logarithmic tensor category theory for 
generalized modules for a conformal vertex algebra, VIII: Braided tensor 
category structure on categories of generalized modules for a conformal vertex 
algebra, arXiv:1110.1931.

\bibitem[KO]{KO}
A. Kirillov, Jr. and V. Ostrik, On a $q$-analogue of the McKay correspondence and the $ADE$ classification of $\mathfrak{sl}_2$   conformal field theories, \textit{Adv. Math.}  \textbf{171}  (2002), no. 2, 183--227. 

\bibitem[KZ]{KZ}
L. Kong and H. Zheng, Semisimple and separable algebras in multi-fusion categories, arXiv: 1706.06904.

\bibitem[LL]{LL}
J. Lepowsky and H. Li, \textit{Introduction to Vertex
Operator Algebras and Their Representations}, Progress in Mathematics, \textbf{227}, Birkh\"{a}user Boston, Inc., Boston, MA, 2004. xiv+318 pp.

\bibitem[Li1]{Li}
H. Li, Symmetric invariant bilinear forms on vertex operator algebras, \textit{J. Pure Appl. Algebra} \textbf{96} (1994), no. 3, 279--297. 

\bibitem[Li2]{Li2}
H. Li, Local systems of vertex operators, vertex superalgebras and modules, \textit{J. Pure Appl. Algebra} \textbf{109} (1996), no. 2, 143--195.

\bibitem[McR1]{McR-ext-ab}
R. McRae, Semisimplicity for finite, non-zero index vertex operator subalgebras, \textit{Oberwolfach Rep.} \textbf{16} (2019), no. 4, 3111--3114.

\bibitem[McR2]{McR1} R. McRae, On the tensor structure of modules for compact orbifold vertex operator algebras, \textit{Math. Z.} \textbf{296} (2020), no. 1-2, 409--452.

\bibitem[McR3]{McR}
R. McRae, Twisted modules and $G$-equivariantization in logarithmic conformal field theory, \textit{Comm. Math. Phys.} \textbf{383} (2021), no. 3, 1939--2019.

\bibitem[McR4]{McR3}
R. McRae, On rationality for $C_2$-cofinite vertex operator algebras, arXiv:2108.01898.

\bibitem[Mi]{Mi1}
M. Miyamoto, $C_1$-cofiniteness and fusion products of vertex operator algebras, \textit{Conformal Field Theories and Tensor Categories}, 271--279, Math. Lect. Peking Univ., Springer, Heidelberg, 2014.

\bibitem[NT]{NT}
K. Nagatomo and A. Tsuchiya, The triplet vertex operator algebra $W(p)$ and the restricted quantum group $\overline{U}_q(sl_2)$ at $q=e^{\frac{\pi i}{p}}$, {\em Exploring New Structures and Natural Constructions in Mathematical Physics}, 1--49, Adv. Stud. Pure Math., \textbf{61}, Math. Soc. Japan, Tokyo, 2011.

\bibitem[TW]{TW}
A. Tsuchiya and S. Wood, The tensor structure on the representation category of the $\mathcal{W}_p$ triplet algebra, \textit{J. Phys. A} \textbf{46} (2013), no. 44, 445203, 40 pp.

\bibitem[Zh]{Zh} Y. Zhu, Modular invariance of characters of vertex operator algebras, \textit{J. Amer.
Math. Soc.} \textbf{9} (1996), no. 1, 237--302.

\end{thebibliography}
\end{document}